\newcommand{\IP}{{\mathbb P}}
\newcommand{\IE}{{\mathbb E}}
\newcommand{\DP}{{\mathrm P}}
\newcommand{\DE}{{\mathrm E}}
\renewcommand{\b}{\beta}
\newcommand{\cvlaw}{\stackrel{{ (d)}}{\longrightarrow}}
\newcommand{\eqlaw}{\stackrel{(d)}{=}}
\newcommand{\dd}{\mathrm{d}}
\newcommand{\balpha}{{\bar \alpha}}
\newcommand{\bbeta}{{\bar \beta}}
\newcommand{\bgamma}{{\bar \gamma}}
\newcommand{\bdelta}{{\bar \delta}}
\newtheorem{theorem}{Theorem}[section]
\newtheorem{lemma}[theorem]{Lemma}
\newtheorem{proposition}[theorem]{Proposition}
\newtheorem{corollary}[theorem]{Corollary}
\newtheorem*{theorem*}{Theorem}
\newtheorem*{lemme*}{Lemme}
\newtheorem*{proposition*}{Proposition}
\newtheorem{conjecture}[theorem]{Conjecture}
\newtheorem{remark}[theorem]{Remark}
\theoremstyle{definition}
\theoremstyle{remark}
\keywords{Two dimensional subcritical directed polymer, high moments of partition functions,  planar random walk intersections, maxima of log-correlated fields}
\subjclass[2010]{Primary 82B44 secondary  82D60, 60G50, 	60H15}
\begin{document}
\title[Moments of polymers]{Moments of partition functions of 2D Gaussian polymers in the weak disorder regime - I}
\author{Cl\'ement Cosco and Ofer Zeitouni}
\address{Cl\'{e}ment Cosco,
Ceremade, Universite Paris Dauphine, Place du Mar\'{e}chal de Lattre de Tassigny, 75775 Paris Cedex 16, France}
\address{Ofer Zeitouni,
Department of Mathematics, Weizmann Institute of Sciences,
Rehovot 76100, Israel.}
\thanks{This project has received funding from the European Research Council (ERC) under the European Union's Horizon 2020 research and innovation programme (grant agreement No. 692452). The first version of this work was completed while the first author was with the Weizmann Institute.}
\email{clement.cosco@gmail.com, ofer.zeitouni@weizmann.ac.il}

\begin{abstract}
Let $W_N(\b) = \DE_0\left[e^{ \sum_{n=1}^N \b\omega(n,S_n) - N\b^2/2}\right]$ be the partition function of a two-dimensional directed polymer in a random environment,
where $\omega(i,x), i\in \mathbb{N}, x\in \mathbb{Z}^2$ are i.i.d.\ standard normal and $\{S_n\}$ is the path of a random walk. With $\beta=\beta_N=\hat\b \sqrt{\pi/\log N}$ and $\hat \b\in (0,1)$ (the subcritical window),
$\log W_N(\b_N)$ is known to converge in distribution to a Gaussian  law of mean $-\lambda^2/2$ and variance $\lambda^2$, with $\lambda^2=\log (1/(1-\hat\b^2)$
 (\textit{Caravenna,  Sun, Zygouras, Ann.\ Appl.\ Probab.\ (2017)}).
We study in this paper
the moments $\IE [W_N( \b_N)^q]$ in the subcritical window, for $q=O(\sqrt{\log N})$. The analysis is based on ruling out triple intersections
\end{abstract}
\maketitle
\section{Introduction and statement of results}
We consider in this paper the partition function of two dimensional
directed polymers in
Gaussian environment, and begin by introducing the model.
Set
\begin{equation}
  \label{eq-WN}
  W_N(\b,x) = \DE_x\left[e^{ \sum_{n=1}^N \b\omega(n,S_n) - N\b^2/2}
  %\Lambda(\b)}
  \right], \quad  x\in \mathbb Z^d.
\end{equation}
%with $d=2$, with $\Lambda(\b) = \log \IE[e^{\b \omega(n,x)}]$.
Here, $\{\omega_{n,x}\}_{n\in \mathbb{Z}_+, x\in \mathbb{Z}^d}$ are
i.i.d. standard centered
Gaussian random variables of law $\IP$, $\{S_n\}_{n\in \mathbb{Z}_+}$
is simple random walk, and $\DE_x$ denotes the law of simple random walk started
at $x\in \mathbb{Z}^2$. Thus, $W_N(\b,x)$ is a random variable measurable
on the $\sigma$-algebra
$\mathcal{G}_N:=\sigma\{ \omega(i,x): i\leq N, x\in \mathbb{Z}^d\}.$
%We also let
%$\mathcal{F}_n:=\sigma\{ S_i, \omega(i,x): i\leq n, x\in \mathbb{Z}^d\}.$
For background, motivation and results on the rich theory
surrounding this topic, we refer the reader to \cite{CStFlour}.
In particular, we mention the relation with the $d$ dimensional stochastic
heat equation (SHE).

The random variables $W_N(\b,0)$ form a $\mathcal{G}_N$ positive martingale,
and therefore converge almost surely to a limit $W_\infty(\b,0)$.
It is well known that in dimensions $d=1,2$, for any $\b>0$ we have $W_\infty(\b,0)=0$, a.s., while for $d\geq 3$, there exists $\beta_c>0$ so that
$W_\infty(\b,0)>0$ a.s. for $\b<\b_c$ and $W_\infty(\b,0)=0$ for
$\b> \b_c$. We refer to these as the \textit{weak} and \textit{strong}
disorder regimes, respectively. In particular, for $d=2$, which is our focus in
this paper, for any $\b>0$, we are in the strong disorder regime.

A meaningful rescaling in dimension $2$ was discovered in the context of the SHE by Bertini and Cancrini \cite{Bertini98} and was later generalized by Caravenna, Sun and Zygouras \cite{CaraSuZy-universalityrelev}, in both the SHE and polymer setups, to a wider range of parameters for which a phase transition occurs. See also \cite{CaSuZy18,CaSuZyCrit21,ChDu18,Gu18KPZ2D,NaNa21}.
Introduce the mean intersection local time for random walk
\begin{equation}
  \label{eq-RNas}
  R_N = \DE_0^{\otimes 2}\left[\sum_{n=1}^N \mathbf{1}_{S_n^1 = S_n^2}\right]\sim \frac{\log N}{\pi}.\end{equation}
The asymptotic behavior of $R_N$ follows from
the local limit theorem \cite[Sec. 1.2]{LawlerIntersections}. Further, the Erd\H os-Taylor theorem \cite{ErdosTaylor} states that $\frac{\pi }{\log N}\sum_{n=1}^N \mathbf{1}_{S_n^1=S_n^2}$ converges in distribution to an exponential random variable of parameter 1.

Set
\begin{equation} \label{eq:defBeta_N}
\b_N =  \frac{\hat{\b}}{\sqrt {R_N}}, \quad \hat \b\geq 0.
\end{equation}
We will use the short-notation $W_N = W_N(\b_N,0)$.
With it, see \cite{CaraSuZy-universalityrelev}, one has
\begin{equation} \label{eq:pointwiseGaussianCV}
  \forall \hat{\b}<1:\quad \log W_N \cvlaw \mathcal N\left(-\frac {\lambda^2} 2,\lambda^2\right), \quad \mbox{\rm with} \quad \lambda^2= \lambda^2(\hat \b) = \log \frac 1 {1-{\hat \b}^2}\ .
\end{equation}
\iffalse
with
\begin{equation}\label{eq:defSigma}
\lambda^2(\hat \b) = \log \frac 1 {1-{\hat \b}^2}\ .
\end{equation}
\fi
The convergence in \eqref{eq:pointwiseGaussianCV} has recently been
extended in \cite{LyZy21} to the convergence of $W_N$ to the exponential of a Gaussian, in all $L^p$.
(The critical case $\hat\b=1$, which we will not study in this paper,  has received considerable attention, see \cite{Bertini98,CaSuZyCrit18,CaSuZyCrit21,GQT}.)

The spatial behavior of $W_N(\b_N,x)$ is also of interest.
Indeed, one has, see \cite{CaSuZy18},
\begin{equation} \label{eq:GFFlimit}
G_N(x):=\sqrt{R_N} \left(\log W_N(\b_N,x\sqrt N) - \IE \log W_N(\b_N,x\sqrt N)\right) \cvlaw \sqrt{\frac {\hat {\b}^2} {1-{\hat \b}^2}} G(x),
\end{equation}
with $G(x)$ a log-correlated Gaussian field on $\mathbb R^2$. The convergence
in \eqref{eq:GFFlimit} is in the weak sense, i.e.\
for any smooth, compactly supported function  $\phi$,
$\int \phi(x) G_N(x) dx$ converges to a centered
Gaussian random
variable of variance $\hat{\b}^2 \sigma^2_\phi/(1-\hat{\b}^2)$, where
\begin{equation}
  \label{eq:lim-cov}
  \sigma^2_\phi=\frac{1}{2\pi}\iint \phi(x)\phi(y)\int_{|x-y|^2/2}^\infty
  z^{-1}e^{-z} dz.\end{equation}
  One recognizes $\sigma^2_\phi$ in \eqref{eq:lim-cov}
  as the variance of the integral of
  $\phi$ against the solution of the \textit{Edwards-Wilkinson} equation. For a related result in the KPZ/SHE setup, see \cite{CaSuZy18,Gu18KPZ2D,NaNa21}.

  Logarithmically correlated fields, and
  in particular their extremes and large values,
  have played an important recent
  role in the study of various models of probability theory at the critical
  dimension,
  ranging from their own study \cite{Biskup,BDZ,DRSV,RV},
  random walk and Brownian motion
  \cite{BRZ,DPRZ}, random matrices \cite{CMN,CN,CFLW},
  Liouville quantum
  gravity \cite{DuSh,KRV}, turbulence \cite{GRV}, and more.
  In particular, exponentiating Gaussian
  logarithmically
  correlated fields yields Gaussian multiplicative chaoses,
  with the ensuing question of convergence towards them.

 In the context of polymers,
\eqref{eq:GFFlimit} opens the door to the study of such questions.
A natural role is played by the random measure
\[\mu_{N}^{\gamma}(x)=\frac{e^{\gamma G_N(x)}}
{\IE e^{\gamma G_N(x)}},\]
and it is natural to ask about its convergence towards a GMC, and
about extremes of $G_N(x)$ for $x$ in some compact subsets of $\mathbb R^2$.

A preliminary step toward any such analysis involves evaluating exponential
moments of $G_N(0)$. This is our goal in this paper. In the following, $q=q(N)$ denotes an integer $q\geq 2$ that may depend on $N$.
Our main result is the following.
\begin{theorem} \label{th:main}
There exists $\hat \beta_0\leq 1$ so that if $\hat \beta<\hat\beta_0$ and
  \begin{equation} \label{eq:condition20nr}
\limsup_{N\to\infty} \frac{3\hat \beta^2}{\left(1-\hat \beta^2\right)}  \frac{1}{\log N}    \binom{q}{2} < 1,
\end{equation}
then,
\begin{equation} \label{eq:estimate}
\IE[W_N^q]  \leq e^{{\binom{q}{2}\lambda^2
(1+\varepsilon_N)}},
\end{equation}
where $\varepsilon_N=\varepsilon(N,\hat \beta) \searrow 0$ as $N\to\infty$.
\end{theorem}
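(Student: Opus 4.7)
The natural starting point is the Gaussian identity
\begin{equation*}
\IE[W_N^q] = \DE_0^{\otimes q}\left[\exp\Bigl(\beta_N^2 \sum_{1\le i<j\le q} L_N^{i,j}\Bigr)\right],
\end{equation*}
where $L_N^{i,j} = \sum_{n=1}^N \mathbf{1}_{S_n^i = S_n^j}$ is the intersection local time of the $i$-th and $j$-th among $q$ independent copies of the walk. If the $\binom{q}{2}$ pair local times were mutually independent, each factor $\DE_0^{\otimes 2}[e^{\beta_N^2 L_N^{1,2}}]$ would, by the two-replica Erd\H{o}s--Taylor computation that underlies \eqref{eq:pointwiseGaussianCV}, converge to $(1-\hat\beta^2)^{-1} = e^{\lambda^2}$, and the target bound would follow directly. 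The whole task is therefore to quantify the dependence between different pairs, which, as the abstract indicates, originates entirely from triple (or higher) simultaneous coincidences $\{S_n^i = S_n^j = S_n^k\}$.

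The plan is to introduce the event $\mathcal{A}_N$ that no three of the $q$ walks ever meet at the same space-time point, and to split
\begin{equation*}
\IE[W_N^q] = \DE_0^{\otimes q}\bigl[e^{\beta_N^2\sum L_N^{i,j}}\mathbf{1}_{\mathcal{A}_N}\bigr] + \DE_0^{\otimes q}\bigl[e^{\beta_N^2\sum L_N^{i,j}}\mathbf{1}_{\mathcal{A}_N^c}\bigr].
\end{equation*}
On $\mathcal{A}_N$, at every time step at most one pair coincides, so the meeting events become effectively disjoint. For this main term I would proceed by induction on $q$, the base $q=2$ being the two-replica computation already cited. At each inductive step, condition on the first meeting time $\tau$ of any two walks, apply the strong Markov property at $\tau$, and exploit the absence of triples in $(\tau,N]$ to decouple the freshly-met pair from the other walks, extracting a factor $(1-\hat\beta^2)^{-1}(1+o(1))$. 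Iterating this $\binom{q}{2}$ times then produces a bound $(1-\hat\beta^2)^{-\binom{q}{2}}(1+\varepsilon_N)^{\binom{q}{2}}$, and using $(1+\varepsilon_N)^{\binom{q}{2}} \le e^{\varepsilon_N\binom{q}{2}}$ delivers the claimed estimate (after renaming $\varepsilon_N$).

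For the bad event I would apply Cauchy--Schwarz to separate the indicator from the exponential. The probability $\DP(\mathcal{A}_N^c)$ is controlled by a union bound over unordered triples: for three independent $2$D walks, $\DE\bigl[\sum_n\mathbf{1}_{S_n^1=S_n^2=S_n^3}\bigr] \le C\sum_n p_n(0,0)^2 = O(1)$ by the local limit theorem, whence $\DP(\mathcal{A}_N^c) = O(q^3)$. The squared exponential moment $\DE_0^{\otimes q}[e^{2\beta_N^2\sum L_N^{i,j}}]$ must be shown to grow slowly compared with the main estimate; this is precisely where \eqref{eq:condition20nr} enters, the factor $3$ reflecting the extra margin required when the main decoupling is iterated once more with doubled inverse temperature. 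The principal obstacle is making the induction quantitative: since $\binom{q}{2}$ is of order $\log N$, the per-pair errors must combine \emph{additively}, not multiplicatively, so as to yield a total correction $e^{o(\binom{q}{2}\lambda^2)}$. This forces a two-replica exponential moment estimate with an error $o(1)$ that is uniform in the starting configurations of the other walks and uniform across the $\binom{q}{2}$ iterations of the induction, and this is where the bulk of the technical work will lie.
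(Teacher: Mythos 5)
Your starting identity \eqref{eq:momentFormula} and the idea of isolating the no-triple configurations are indeed where the paper begins, but the subsequent plan has two gaps that make it unrepairable as stated.

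\textbf{The bad event is not rare.} With all $q$ walks started at the origin, the event $\mathcal{A}_N^c$ (some three walks coincide at some time) has probability close to $1$ already for $q=3$, and for $q\geq 10$ it is certain that at time $1$ three particles share one of the four neighbors of $0$. The union bound gives only $\DE[\#\text{triples}]=O(q^3)$, which is far larger than $1$ for $q\asymp\sqrt{\log N}$; Markov's inequality is vacuous. So the decomposition ``main contribution on $\mathcal{A}_N$, negligible remainder by Cauchy--Schwarz'' cannot close: even if $\DP(\mathcal{A}_N^c)$ were $o(1)$, Cauchy--Schwarz would produce the factor $\DE[e^{2\beta_N^2\sum L}]^{1/2}\approx e^{\tfrac12\binom{q}{2}\log\tfrac1{1-2\hat\beta^2}}$, which \emph{exceeds} the target $e^{\binom{q}{2}\log\frac1{1-\hat\beta^2}}$ by convexity of $x\mapsto\log\frac{1}{1-x}$. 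The paper does not try to show that triples are rare globally. It introduces a burn-in time $T=\lfloor e^{(\log N)^{1/4}}\rfloor$, restricts the no-triple (and no simultaneous double-pair, i.e.\ $F_n\cup K_n$) constraint to times $n>T$, and then handles the event $\{\tau_T\leq N\}$ by a self-consistent bootstrap: writing $M=\sup_X M(X)$ and $A(X)\leq M\sum_k\DE_X[e^{I_k}\mathbf{1}_{\tau_T=k}]$, it shows the second factor is $\varepsilon_N\to 0$ (this is where \eqref{eq:condition20nr} and the H\"older/Khas'minskii estimates enter), yielding $M(1-\varepsilon_N)\leq\sup B$. That is a fixed-point argument, not an event split with a small bad set.

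\textbf{No actual decoupling on the no-triple event.} Your inductive step ``condition on the first meeting time and decouple the freshly-met pair, extracting a factor $(1-\hat\beta^2)^{-1}(1+o(1))$'' is not valid: after two walks meet, they keep meeting each other and keep sharing walkers with all other pairs (e.g.\ $L_N^{1,2}$ and $L_N^{1,3}$ both involve walk $1$), so the pair local times are not independent, and nothing peels off multiplicatively. The no-triple constraint merely ensures that at each instant at most one pair coincides; it does not make the $\binom{q}{2}$ local times factorize. The paper replaces this by a genuinely combinatorial chaos expansion (Proposition~\ref{prop:decInChaos}), a rewriting in terms of alternating two-particle intersection intervals and ``diagrams'' (equations \eqref{eq:decompambm0}--\eqref{eq:decompambm}), sharp renewal estimates on the two-replica kernel $U_N$ (Proposition~\ref{prop:factor2ndMoment}), and then an induction over \emph{indices of the diagram} with a careful long-/short-jump and good/bad bookkeeping (Lemmas~\ref{lem:indReduction}--\ref{lem:fibo}), so that errors enter additively as you anticipated they must; the constant $3$ you see in \eqref{eq:condition20nr} in fact arises from the coefficient recursion $c_i^k\leq 3^i\prod(1+\gamma_r^m)$, not from a doubled inverse temperature. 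The localization-in-time and diagrammatic induction are the ideas missing from your proposal, and without them the per-pair errors cannot be made to add.
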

The proof will show that in Theorem \ref{th:main}, $\hat \beta_0$ can be taken as $1/96$, but we do not expect this to be optimal.
\begin{remark}
  \label{rem-1.2}
  With a similar method, we can also prove that the estimate
\eqref{eq:estimate}
holds for all $\hat \beta <1$ at the cost of choosing
$q^2=o(\log N / \log \log N)$,
see Section \ref{subsec-rem1.2} for details.
In particular, we obtain that the partition function possesses
all (fixed) moments  in the region $\hat \beta <1$:
\begin{equation} \label{eq:Lp-regions}
\forall q\in \mathbb N,\quad  \sup_{N} \IE[W_N^q] < \infty.
 \end{equation}
As mentioned above,
\eqref{eq:Lp-regions} was
independently proved  in \cite{LyZy21}.
(See also \cite{LyZy22} for further precision and a multivariate generalization of
the Erd\H{o}s-Taylor theorem.)
They also observed that together with
the convergence in distribution \eqref{eq:pointwiseGaussianCV},
the estimate \eqref{eq:Lp-regions}
implies that for all fixed $q\in \mathbb N$,
\begin{equation}
  \label{eq-convergencepol}
  \IE[W_N^q] e^{-\binom{q}{2} \lambda^2(\hat \beta)} \underset{N\to\infty}{\longrightarrow} 1.
\end{equation}
Note however that the estimate \eqref{eq:estimate} does \textit{not}
yield \eqref{eq-convergencepol} when $q\to \infty$ with $N\to\infty$.
\end{remark}
\begin{remark}
  \label{rem-1.3}
Theorem \ref{th:main} is of course not enough to prove convergence toward
a GMC.
  For that, one would need to improve the error 
  in the exponent from $O(q^2 \varepsilon_N)$ to $O(1)$, to obtain a complementary lower bound  and, more important, to derive
  %. More important, one would need 
  similar multi-point
  estimates. We hope to return to these issues in future work.
\end{remark}
The structure of the paper is as follows. In the next
Section \ref{sec-Intersection},
we use a well-worn argument to reduce the computation of moments
to certain estimates concerning the intersection of (many) random walks.
After some standard preliminaries, we state there our main
technical estimate, Theorem \ref{th:momentwithoutTriple}, which provides
intersection estimates under the extra assumptions that
all intersections are in pairs, i.e.\ that no triple (or more) points exist. The rest
of the section provides the proof of Theorem \ref{th:main}.
Section \ref{sec-notriple} then develops the induction scheme that is used
to prove  Theorem \ref{th:momentwithoutTriple}. Since
we assume that there are no triple (or more) intersections, we may consider
particles as matched in pairs at intersection times. The induction is
essentially on the number of instances in which ``matched particles''
break the match and create a different matching.
Section \ref{sec-discussion} provides a discussion of our results, their
limitations, and possible extensions. In particular we explain there
why the constraint on $q$ in
Theorem \ref{th:main} limits
our ability to obtain the expected sharp upper bounds on the maximum
of $\log W_N(\hat \beta_N, x\sqrt{N})$.
The appendices collect several auxilliary results and a sharpening
of one of our estimates,
see Proposition \ref{prop:factor2ndMomentbis}.

\noindent
{\bf Acknowledgment} We thank Dimitris Lygkonis and Nikos Zygouras for sharing their work \cite{LyZy21} with us prior to posting,  and for useful comments. 
We thank the referee for a
careful reading of the original manuscript and for
many comments that helped us greatly improve the paper. We are grateful to Shuta Nakajima for helpful comments on a previous version of the article.

\noindent
{\bf Data availability statement} No new data was generated in relation to this manuscript.

\noindent
{\bf Conflict of interest statement} There are no conflict of interests to either author.

\section{Intersection representation, reduced moments, and proof of Theorem
\ref{th:main}}
\label{sec-Intersection}
Throughout the rest of the paper, we let $p(n,x)=p_n(x)=\DP_0(S_n=x)$.
There is a nice formula for the $q$-th moment of the partition function
whose importance is apparent in previous work on directed polymers, for
example in \cite{CaravennaFrancesco2019TDsr,CaSuZyCrit18}.
Indeed,
\begin{align*}
\IE[W_N^q] & = \DE_0^{\otimes q}
\IE e^{\sum_{i=1}^q \sum_{n=1}^N (\beta_N \omega(n,S_n^{i}) - \b_N^2/2)},
\end{align*}
where $S^1,\dots,S^q$ are $q$ independent copies of the simple random walk and $\DE_{X}^{\otimes q}$ denotes the expectation for the product measure started at $X = (x^1,\dots,x^q)$. (If the starting point $X$ is not specified, we assume $X=\mathbf 0$.)
Since the $\omega(i,x)$ are Gaussian and the variance of $\sum_{i=1}^q \beta_N \omega(n,S_n^{i})$ is equal to $\beta_N^2 \sum_{i=1\dots q,j=1\dots q} \mathbf{1}_{S_n^i = S_n^j}$, we have the following  formula for the moment in terms of intersections of $q$ independent random walks:
\begin{equation} \label{eq:momentFormula}
\IE[W_N^q] = \DE^{\otimes q}\left[ e^{\beta_N^2 \sum_{1\leq i<j\leq q} \sum_{n=1}^N \mathbf{1}_{S_n^{i} = S_n^j}}\right].
\end{equation}

\subsection{No triple estimate}
The key step in upper bounding the right-hand side of
\eqref{eq:momentFormula}  is to restrict the summation
to subsets where
there are no triple (or more) intersections. More precisely, denote by
\begin{align}
 F_n& = \left\{\exists (\balpha,\bbeta,\bgamma)\in \llbracket 1,q\rrbracket^3:  \balpha< \bbeta<\bgamma,  S_n^\balpha = S_n^\bbeta = S_n^\bgamma\right\},  \label{eq-Fn} \\
K_n&=\big\{\exists (\balpha,\bbeta,\bgamma,\bdelta)\in \llbracket 1,q\rrbracket^4:  \balpha< \bbeta, \bgamma<\bdelta,\label{eq-Kn}\\
& \hspace{2cm} \{\balpha,\bbeta\}\cap \{\bgamma,\bdelta\}=\emptyset, S_n^\balpha = S_n^\bbeta,   S_n^\bgamma=S_n^\bdelta\big\}, \nonumber
\end{align}
and let
\[G_T = \bigcap_{n\in \llbracket 1,T\rrbracket} (F_n\cup K_n)^\complement\]
be the event that there is no triple (or more) intersection, i.e. that at 
each given time no more than a pair of particles are involved in an intersection.

The following theorem is the technically involved part of this paper.
Its proof will be presented in Section \ref{sec-notriple}.
\begin{theorem} \label{th:momentwithoutTriple}
  Fix $\hat \b\in (0,1)$. Then there exists $c=c(\hat \beta)>0$ so that
  if \eqref{eq:condition20nr} holds then
  %Assume that either $q(N)=q_0\in \mathbb N$ is constant, or that $q(N)\to \infty$ as $N\to\infty$ with the condition \eqref{eq:condition20nr}.
%Then, \CC{there exists $c=c(\hat \beta)>0$ (which is independent of $q_0$ in the constant case) such that,}
 uniformly in $T\in \llbracket 1,N\rrbracket$ as $N\to\infty$,
\begin{equation} \label{eq:momentToT}
\sup_{X\in (\mathbb Z^2)^q}\DE_X^{\otimes q} \left[e^{\beta_N^2 \sum_{n=1}^T \sum_{1\leq i<j\leq q}  \mathbf{1}_{S_n^{i} = S_n^j}}\mathbf{1}_{G_T}\right] \leq e^{\lambda_{T,N}^2 \binom{q}{2} (1+cq^{-1/2} + o(1)) },
\end{equation}
%\sout{$C=C(\hat \beta)>0$ in the case $q\to\infty$ and $C=C(\hat \beta,q_0)$ when $q=q_0$,} 
where $\lambda_{T,N}$ is defined as
\begin{equation} \label{eq:lambdaT}
  \lambda_{T,N}^2(\hat \b) = \lambda_{T,N}^2=
  \log \frac 1 {1-{\hat \b}^2\frac{\log T}{\log N}}\ .
\end{equation}
\end{theorem}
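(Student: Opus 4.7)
The plan is to use the no-triple structure of $G_T$ to decompose the intersection pattern among the $q$ walks into a sequence of pairwise ``encounter intervals.'' On $G_T$, for every time $n$ the coincidence relation among $(S_n^1,\ldots,S_n^q)$ is either empty or a single pair (since both $F_n$ and $K_n$ are precluded), so the intersection local time decomposes as $\sum_{n \leq T}\sum_{i<j}\mathbf{1}_{S_n^i = S_n^j} = \sum_\ell |I_\ell|$ with $\{I_\ell\}$ disjoint time-intervals, each labeled by the coincident pair $\{i_\ell,j_\ell\}$. The exponential in \eqref{eq:momentToT} factorizes along this decomposition, and I would organize the $q$-fold integral as a sum over such encounter sequences. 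The induction, as announced by the authors, is on the number of \emph{switches} -- occurrences where a particle has just separated from one partner and later re-pairs with a different particle.

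In the non-switching base case, the $q$ particles split into a fixed partial matching $M$, and each pair $\{i,j\}\in M$ evolves as an independent two-walk system with intersection local time $L_T^{ij}:=\sum_{n\leq T}\mathbf{1}_{S_n^i=S_n^j}$. The two-particle moment bound
\[\DE^{\otimes 2}\!\left[e^{\beta_N^2 L_T^{ij}}\right]\leq \frac{1}{1-\hat\beta^2\log T/\log N}=e^{\lambda_{T,N}^2}\]
applies to each pair, yielding a contribution bounded by $e^{|M|\lambda_{T,N}^2}$. Summing over matchings gives at most $(q-1)!!$ terms, contributing an entropic factor $O(q\log q)$ in the exponent; under hypothesis \eqref{eq:condition20nr} this is dominated by the $\binom{q}{2}\lambda_{T,N}^2$ target with room to spare and is absorbed into the announced $(1+cq^{-1/2}+o(1))$ correction.

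For configurations with $k\geq 1$ switches, I would condition on the time $\tau$ of the first switch together with the triple of particles $(i,j,k)$ involved (pair $\{i,j\}$ breaks, pair $\{i,k\}$ forms at or after $\tau$), apply the strong Markov property at $\tau$, and recurse on $[\tau,T]$ with $k-1$ switches. Each switch is paid for by a factor of order $\hat\beta^2\log T/\log N$ coming from the Green's-function bound on the probability of a specified future meeting; summing over the $O(q)$ partner choices and the time of the switch must be shown to yield a strict contraction. Composing over at most $O(q^2)$ switches then produces the $(1+cq^{-1/2})$ error, with the smallness $\hat\beta<\hat\beta_0\leq 1/96$ used precisely to make the per-switch factor strictly less than unity.

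The principal obstacle I foresee is propagating the indicator $\mathbf{1}_{G_T}$ through the Markov decomposition: the no-triple constraint is global in time, so concatenating pre-switch and post-switch segments must be shown not to create forbidden triples, and $\mathbf{1}_{G_T}$ must be made to factor essentially into per-segment constraints compatible with the induction. A secondary difficulty is uniform quantitative control of the switch entropy in $T\in\llbracket 1,N\rrbracket$: the per-switch gain is weakest when $T=N$ and $\lambda_{T,N}^2$ is largest, which is exactly where the factor $3$ in \eqref{eq:condition20nr} and the smallness of $\hat\beta_0$ are needed to close the induction. Relaxing these to all $\hat\beta<1$ (as in Remark \ref{rem-1.2}) would require a sharper per-switch estimate that the present scheme sacrifices for simplicity.
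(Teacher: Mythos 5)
Your proposal has the structure inverted relative to what is actually needed, and the inversion is not cosmetic. You treat the ``no-switch'' configuration---a fixed partial matching $M$, each matched pair evolving as an independent two-walk system---as the base case carrying the main contribution, with switches treated as perturbations to be contracted away. But that base case contributes at most $e^{|M|\lambda_{T,N}^2}$ with $|M|\leq q/2$, and even with the $(q-1)!!$ matching entropy this is $e^{O(q\lambda^2 + q\log q)}$, which is \emph{not} of order $e^{\binom{q}{2}\lambda_{T,N}^2}$. The target bound scales like $e^{c q^2}$, and the only way to reach it is that essentially every pair among the $\binom{q}{2}$ pairs contributes $\lambda^2$ to the exponent; that requires the particles to re-pair $\Theta(q^2)$ times. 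Switches are therefore the dominant mechanism, not the error term, and the sum over switch patterns must produce an exponential $\sum_m (\lambda_{T,N}^2\binom{q}{2})^m/m!$ rather than a geometrically small correction.

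The paper reflects this: after passing to the chaos expansion $\Psi_{N,q}$ in Proposition \ref{prop:decInChaos} (where the indicator $\mathbf{1}_{G_T}$ is used only once, to force ``one pair per intersection time,'' and is then dropped---so the ``obstacle'' you identify about propagating $\mathbf{1}_{G_T}$ through a Markov decomposition does not arise), the expansion is reorganized as a sum over \emph{diagrams} $\mathbf{I}=(i_r,j_r)_{r\leq m}$ of arbitrary size $m$, with the two-walk object $U_N$ governing each same-pair stretch. The real technical work is in summing over the waiting times $u_r$ between re-pairings: the integrand $\prod_r F(u_r+\tilde u_r)$ couples consecutive steps, and a step whose new pair already met recently (a ``small jump'') creates a short interval that cannot be integrated against $F$ without loss. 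The induction (Lemmas \ref{lem:indReduction}--\ref{lem:fibo}) is organized around classifying each index as good/bad precisely to show that the loss factor $c_i^k$ remains controlled, and the combinatorics of diagrams with $n$ small jumps give $\binom{m}{n}(2(L+2)q)^n\binom{q}{2}^{m-n}$, which is where the $q^{-1/2}$ correction (through the choice $L=\lceil\sqrt q\rceil$) is produced. None of this machinery appears in your proposal, and your per-switch contraction factor---which under \eqref{eq:condition20nr} is bounded below $1$ but \emph{not} small---would not sum to a bound of the required order once you account for the time integral over each inter-switch gap. So the proposal as written does not reach \eqref{eq:momentToT}.
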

Note that as soon as $q>9$, the expression in the left side of
\eqref{eq:momentToT} trivially vanishes
if $X={\bf 0}$. The $X$'s of interest
are those that allow for non-existence of
triple or more intersections.

Assuming Theorem \ref{th:momentwithoutTriple}, the proof of
Theorem \ref{th:main} is relatively straightforward. We will need
the preliminary results collected in the next subsection.

\subsection{A short time a priori estimate}
The following lemma is a variation on Khas'minskii's lemma \cite[p.8, Lemma 2.1]{Sznitman}.
\begin{lemma}\label{lem:modKhas} Let $\mathcal Z$ be the set of all nearest-neighbor walks on $\mathbb Z^2$, that is $Z\in\mathcal Z$ if $Z=(Z_i)_{i\in \mathbb N}$ where $Z_i\in\mathbb Z^2$ and $Z_{i+1}-Z_i \in \{\pm \mathbf e_j,j\leq d\}$ where $\mathbf e_j$ are the canonical vectors of $\mathbb Z^2$.  If for some $k\in \mathbb N$ and $\kappa\in \mathbb R$, one has
\begin{equation} \label{eq:condition_modKhas}
\eta = \sup_{Z\in \mathcal Z} \sup_{x\in \mathbb Z^2} \left(e^{\kappa^2}-1 \right) \DE_x\left[\sum_{n=1}^k \mathbf{1}_{S_n = Z_n}\right] <1,
\end{equation}
then
\begin{equation}
\sup_{Z\in \mathcal Z} \sup_{x\in \mathbb Z^2} \DE_{x} \left[e^{\kappa^2 \sum_{n=0}^k \mathbf{1}_{S_n=Z_n}} \right] \leq \frac{1}{1-\eta}.
\end{equation}
\end{lemma}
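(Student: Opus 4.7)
The plan is to follow the classical Khas'minskii expansion, using that each indicator $\mathbf 1_{S_n=Z_n}$ takes only the values $0$ and $1$. Writing $e^{\kappa^2 \mathbf 1_{S_n=Z_n}}=1+(e^{\kappa^2}-1)\mathbf 1_{S_n=Z_n}$ for each $n$ and multiplying,
\[
\exp\Bigl(\kappa^2\sum_{n=0}^k \mathbf 1_{S_n=Z_n}\Bigr)=\sum_{I\subseteq\{0,\dots,k\}}(e^{\kappa^2}-1)^{|I|}\prod_{n\in I}\mathbf 1_{S_n=Z_n}.
\]
Taking $\DE_x$ and ordering $I=\{n_1<\cdots<n_m\}$, the target expectation becomes $\sum_{m\ge 0}(e^{\kappa^2}-1)^m J_m$ with $J_m:=\sum_{n_1<\cdots<n_m}\DE_x[\prod_j\mathbf 1_{S_{n_j}=Z_{n_j}}]$.

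Next I would apply the Markov property of $S$ at each $n_j$ to factor the joint indicator probability as a product of one-step transitions $p_{n_j-n_{j-1}}(Z_{n_j}-Z_{n_{j-1}})$, and then perform the iterated sum greedily from the largest index downward. For fixed $n_1<\cdots<n_{m-1}$, the sum over $n_m>n_{m-1}$ equals
\[
\DE_{Z_{n_{m-1}}}\Bigl[\sum_{i=1}^{k-n_{m-1}}\mathbf 1_{S_i=\widetilde Z_i}\Bigr],\qquad \widetilde Z_i:=Z_{n_{m-1}+i}\in\mathcal Z,
\]
which by hypothesis \eqref{eq:condition_modKhas} (applied with starting point $Z_{n_{m-1}}$ and trajectory $\widetilde Z$) is $\le\eta/(e^{\kappa^2}-1)$. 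Iterating over all $m$ layers yields $J_m\le(\eta/(e^{\kappa^2}-1))^m$, and summing the resulting geometric series (convergent since $\eta<1$) delivers
\[
\DE_x\!\Bigl[e^{\kappa^2\sum_n \mathbf 1_{S_n=Z_n}}\Bigr]\le \sum_{m\ge 0}\eta^m=\frac 1 {1-\eta}.
\]

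The one point requiring a touch of care is the outermost layer of the iteration: the sum over $n_1$ runs down to $0$, whereas \eqref{eq:condition_modKhas} controls only times $\ge 1$. This is purely a bookkeeping issue: either one factors out the deterministic boundary contribution at the start via $e^{\kappa^2 \mathbf 1_{S_0=Z_0}}=e^{\kappa^2\mathbf 1_{x=Z_0}}$ and runs the expansion on the tail sum $\sum_{n=1}^k$ (where every iteration layer is directly controlled by the hypothesis), or one checks that including $n_1=0$ in the first layer only changes constants in a harmless way. Beyond this small point I do not expect any substantive obstacle; the argument is the standard Khas'minskii exponentiation scheme adapted from the occupation time of a set to the discrete event of hitting a prescribed trajectory.
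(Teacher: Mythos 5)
Your proposal is correct and is essentially the proof the paper gives: the same binomial factorization $e^{\kappa^2\mathbf 1}=1+(e^{\kappa^2}-1)\mathbf 1$, the same expansion over ordered tuples, and the same Markov-property iteration from the largest index, using \eqref{eq:condition_modKhas} at each layer to peel off a factor $\eta$. You are also right to flag the $n=0$ index: the paper's own proof in fact bounds $\DE_x\bigl[e^{\kappa^2\sum_{n=1}^k\mathbf 1_{S_n=Z_n}}\bigr]$, so the $n=0$ in the lemma's display is a typo — your first remedy (splitting off the deterministic $n=0$ factor) is the correct reading, while your second alternative would only yield $e^{\kappa^2}/(1-\eta)$ rather than $1/(1-\eta)$.
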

\begin{proof}
%\begin{align*}
%&\DE_{x} \left[e^{\sum_{n=0}^k \mathbf{1}_{S_n=X_n}} \right]\\
%& = \sum_{p=0}^\infty \frac{1}{p!} \DE_{x} \left(\sum_{n=0}^k \mathbf{1}_{S_n=X_n}\right)^p\\
%& = \sum_{p=0}^\infty \sum_{0\leq n_1\leq \dots\leq n_p \leq k}\DE_{x} \left[ \prod_{i=1}^p \mathbf{1}_{S_{n_i}=X_{n_i}}\right]\\
%& = \sum_{p=0}^\infty \sum_{0\leq n_1\leq \dots\leq n_{p-1} \leq k} \DE_{x}
%\left[
%\prod_{i=1}^{p-1} \mathbf{1}_{S_{n_i}=X_{n_i}} \DE_{S_{n_{p-1}}}
%\left[\sum_{n=0}^{k-n_{p-1}} \mathbf{1}_{S_n = X_{n+n_{p-1}}} \right]
%\right].
%\end{align*}
%ADD $\beta^2$.

Let $\Lambda_2 = e^{\kappa^2}-1$. We have:
\begin{align*}
&\DE_{x} \left[e^{\kappa^2 \sum_{n=1}^k \mathbf{1}_{S_n=Z_n}} \right]
 = \DE_{x} \left[\prod_{n=1}^k \big(1+ \Lambda_2\mathbf{1}_{S_n=Z_n}\big)\right]\\
& = \sum_{p=0}^\infty \Lambda_2^p \sum_{1\leq n_1 < \dots<  n_p \leq k}\DE_{x} \left[ \prod_{i=1}^p \mathbf{1}_{S_{n_i}=Z_{n_i}}\right]\\
& = \sum_{p=0}^\infty \Lambda_2^p \sum_{1\leq n_1 < \dots < n_{p-1} \leq k} \DE_{x}
\left[
\prod_{i=1}^{p-1} \mathbf{1}_{S_{n_i}=Z_{n_i}} \DE_{S_{n_{p-1}}}
\left[\sum_{n=1}^{k-n_{p-1}} \mathbf{1}_{S_n = Z_{n+n_{p-1}}} \right]
\right]\\
& \stackrel{\eqref{eq:condition_modKhas}}{\leq}\sum_{p=0}^\infty \Lambda_2^{p-1}\, \eta \sum_{1\leq n_1 < \dots < n_{p-1} \leq k} \DE_{x}
\left[
\prod_{i=1}^{p-1} \mathbf{1}_{S_{n_i}=Z_{n_i}} 
%\DE_{S_{n_{p-1}}}
\right]
 \leq \dots \leq
\sum_{p=0}^\infty  \eta^p = \frac{1}{1-\eta}.
\end{align*}
\end{proof}
The next lemma gives an a-priori rough estimate on the moments of
$W_k(\beta_N)$ $=$ $W_k(\beta_N,0)$ when $k$ is small. 
\begin{lemma} \label{lem:aprioriBound} Let $\hat \beta>0$.
Let $b_N>0$ be a deterministic sequence such that $b_N = o(\sqrt{\log N})$ as $N\to\infty$.
Assume that $q=O(\sqrt {\log N})>1$.
Then,
for all $k\leq e^{b_N}$,
\[
\IE[W_{k}(\beta_N)^q] =  \DE^{\otimes q}\left[ e^{\beta_N^2 \sum_{1\leq i<j\leq q} \sum_{n=1}^k \mathbf{1}_{S_n^{i} = S_n^j}}\right] \leq e^{\frac{1}{\pi}(1+\varepsilon_N)  q^2 \beta_N^2 \log (k+1)},\]
for $\varepsilon_N = \varepsilon_N(\hat \beta)\to 0$ as $N\to\infty$.
\end{lemma}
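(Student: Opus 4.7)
The plan is to combine H\"older's inequality with the modified Khas'minskii bound of Lemma~\ref{lem:modKhas} in an iterative fashion, peeling off one walk at a time. Write $L_{ij}(k) := \sum_{n=1}^k \mathbf{1}_{S^i_n = S^j_n}$ and $\Phi_q(k) := \sup_{\mathbf{x} \in (\mathbb{Z}^2)^q} \DE_{\mathbf{x}}^{\otimes q}[e^{\beta_N^2 \sum_{i<j} L_{ij}(k)}]$; clearly $\IE[W_k(\beta_N)^q] \leq \Phi_q(k)$. First, condition on $(S^2, \ldots, S^q)$ and apply H\"older to the remaining expectation over $S^1$ with $q-1$ equal exponents:
\[
\DE\Bigl[\prod_{j=2}^q e^{\beta_N^2 L_{1j}(k)} \,\Big|\, S^2,\ldots,S^q\Bigr] \leq \prod_{j=2}^q \DE\Bigl[e^{(q-1)\beta_N^2 L_{1j}(k)}\,\Big|\,S^j\Bigr]^{1/(q-1)}.
\]
Each $S^j$ lies in the class $\mathcal Z$ of nearest-neighbor walks, so Lemma~\ref{lem:modKhas} (applied to $S^1$, with $\kappa^2 = (q-1)\beta_N^2$ and $Z = S^j$) bounds each factor by $(1-\eta_{q-1})^{-1}$, where
\[
\eta_m := (e^{m\beta_N^2}-1) R^*_k, \qquad R^*_k := \sup_{Z\in\mathcal Z,\, x}\DE_x\Bigl[\sum_{n=1}^k \mathbf{1}_{S_n = Z_n}\Bigr].
\]
The hypothesis of Lemma~\ref{lem:modKhas} is satisfied because $(q-1)\beta_N^2 = O(1/\sqrt{\log N})$ and $R^*_k = O(\log(k+1)) = O(b_N)$, giving $\eta_{q-1} = O(b_N/\sqrt{\log N}) = o(1)$. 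This produces the recursion $\Phi_q(k) \leq \Phi_{q-1}(k)/(1-\eta_{q-1})$; iterating from $\Phi_1(k) = 1$ yields $\Phi_q(k) \leq \prod_{m=1}^{q-1}(1-\eta_m)^{-1}$.

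Taking logarithms and expanding, $\log \Phi_q(k) \leq \sum_m \eta_m (1+o(1))$. Since $q\beta_N^2 \to 0$, one has $(e^{m\beta_N^2}-1) = m\beta_N^2(1+o(1))$ uniformly in $m \leq q-1$, so $\sum_m \eta_m \leq (1+o(1))\binom{q}{2}\beta_N^2 R^*_k$; the quadratic term $\sum_m \eta_m^2 = O((\beta_N^2 R^*_k)^2 q^3)$ is negligible because $\beta_N^2 R^*_k q = O(b_N/\sqrt{\log N}) = o(1)$. The local central limit theorem for 2D SRW yields $\sup_y p(n,y) \leq (2/(\pi n))(1+o(1))$, hence $R^*_k \leq (2/\pi)\log(k+1)(1+o_N(1))$. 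Combining everything with $\binom{q}{2} \leq q^2/2$,
\[
\log \Phi_q(k) \leq (1+\varepsilon_N) \, q^2\beta_N^2 \log(k+1)/\pi,
\]
for some $\varepsilon_N = \varepsilon_N(\hat\beta) \to 0$, which is the claimed bound.

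The main technical point is to verify that every error term -- from the Taylor expansion of $e^{m\beta_N^2}-1$, the quadratic remainder in $\log(1/(1-x))$, and the local CLT correction to $R^*_k$ -- can be collected into a single $\varepsilon_N = o(1)$ uniformly in $k \leq e^{b_N}$ and $q = O(\sqrt{\log N})$. The hypothesis $b_N = o(\sqrt{\log N})$ is precisely what makes $\eta_{q-1} = o(1)$, validating Lemma~\ref{lem:modKhas} at each step of the recursion; it is also what keeps the above Taylor and logarithmic expansions controllable.
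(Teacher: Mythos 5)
Your argument is correct and rests on the same two ingredients as the paper's proof, namely H\"older's inequality and the modified Khas'minskii bound of Lemma~\ref{lem:modKhas}, but the way you combine them is organized differently. The paper applies H\"older once with $q$ equal exponents (using the symmetrization $\sum_{i<j}N^{i,j}_k=\tfrac12\sum_i\sum_{j\neq i}N^{i,j}_k$) to isolate a single ``center'' particle $S^1$, then exploits the conditional independence of $(N^{1,j}_k)_{j>1}$ given $S^1$ to write the $q$-fold expectation as $\DE[\DE^{\otimes 2}[e^{q\beta_N^2 N^{1,2}_k/2}\mid S^1]^{q-1}]$, and applies Khas'minskii once to obtain $(1-\eta)^{-(q-1)}$ with $\eta=(e^{q\beta_N^2/2}-1)R^\star_k$. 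You instead condition on $S^2,\dots,S^q$, apply H\"older with $q-1$ exponents to split the $S^1$-integral, and recurse, producing the product $\prod_{m=1}^{q-1}(1-\eta_m)^{-1}$ with $\eta_m=(e^{m\beta_N^2}-1)R^\star_k$. The two decompositions are dual in the sense that the paper treats $S^1$ as the fixed ``target'' $Z$ and sums over one moving walk $S^2$, whereas you treat $S^1$ as the moving walk and each $S^j$ as a target; since $q\beta_N^2=o(1)$ both give the same leading exponent $\binom q2\beta_N^2 R^\star_k(1+o(1))$. One small stylistic remark: you invoke the local CLT to get $p^\star_n\leq\tfrac{2}{\pi n}(1+o(1))$, but the paper's Appendix~\ref{sec-pnstar} proves the cleaner deterministic bound $p^\star_n\leq\tfrac{2}{\pi n}$ for all $n\geq 1$, which is what is actually cited in display~\eqref{eq:pnstar} and removes one $o(1)$ from the bookkeeping; it is worth using that directly.
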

\begin{proof}
 Let $N^{i,j}_k=\sum_{n=1}^k \mathbf{1}_{S_n^{i} = S_n^j}$.
By H\"older's inequality,
we find that
\begin{align*}
\IE[W_{k}(\beta_N)^q] & \leq \DE^{\otimes q}\left[ e^{\frac{q\beta_N^2}{2} \sum_{ 1 < j\leq q}  N^{1,j}_k} \right]^{q/q}
 = \DE \left[\DE^{\otimes 2}\left[ e^{\frac{q\beta_N^2}{2} N^{1,2}_k} \middle | S^1\right]^{q-1}\right],
\end{align*}
by independence of the $(N_{1,j})_{1<j}$ conditioned on $S^1$.
We now estimate the above conditional expectation using Lemma \ref{lem:modKhas}.
Let $\kappa^2= q\beta_N^2/2\to 0$ and $\eta$ be as in \eqref{eq:condition_modKhas}. For any $Z\in \mathcal Z$ and $y\in \mathbb Z^2$,
\[\DE_y\left[\sum_{n=1}^k \mathbf{1}_{S_n = Z_n}\right] \leq \sum_{n=1}^k \sup_{x} p_n(x),
\]
where, see Appendix \ref{sec-pnstar} for an elementary proof,
\begin{equation}\label{eq:pnstar}
\forall n\geq 1: \quad  \sup_x p_n(x) =:p_n^{\star} \leq \frac{2}{\pi n}.
\end{equation}
Thus,  $\eta \leq \frac{1}{\pi} (1+o(1)) q\beta_N^2 \log (k+1) \to 0$, uniformly for $k\leq e^{b_N} $ as $N\to\infty$.
Lemma \ref{lem:modKhas} then yields that for such $k$'s,
\[
\IE[W_{k}(\beta_N)^q] \leq \left(\frac{1}{1-\frac{1}{\pi}(1+o(1))q\beta_N^2 \log (k+1)}
\right)^{q-1} = e^{\frac{1}{\pi}(1+o(1)) q^2 \beta_N^2 \log (k+1)}.\]
\end{proof}

\subsection{Proof of Theorem \ref{th:main}.}
%We begin by assuming that $q(N)=q_0$ is constant or that $q(N)\to\infty$ as $N\to\infty$ with the condition \eqref{eq:condition20nr}. This allows us to apply Theorem \ref{th:momentwithoutTriple}.
As a first step, we will prove that
\begin{equation} \label{eq:firstStepQmoment}
\IE[W_N^q] \leq e^{{\binom{q}{2} \lambda^2}(1+cq^{-1/2}+\varepsilon_N)},
\end{equation}
where $c=c(\hat \beta)> 0$ 
%\sout{
%$C=1$ if $q\to\infty$ and $C=C(\hat \beta,q_0)$ when $q=q_0$,} 
and $\varepsilon_N=\varepsilon_N(\hat \beta)\to 0$ as $N\to\infty$.

As a second step, we improve the bound in case $q$ is bounded and thus complete the proof for general
$q(N)$  assuming only condition \eqref{eq:condition20nr}, by a diagonalization argument.

Recall the definitions of $\lambda_{k,N}$ in \eqref{eq:lambdaT} and
that
$\lambda=\lambda_{N,N}(\hat \beta)$.
By standard convexity arguments, we note that $x\leq \log(\frac{1}{1-x}) \leq \frac{x}{1-x}$ for all $x\in [0,1)$;
hence for all $a>1$ and $\hat \beta < 1$ such that $a\hat \beta^2 < 1$,
 \begin{equation} \label{eq:bounds_lambda}
 \forall k\leq N:\quad  a \hat \beta^2 \frac{\log k}{\log N}\leq \lambda_{k,N} (\sqrt a \hat \beta)^2 \leq\frac{a \hat \beta^2}{1-a\hat \beta^2} \frac{\log k}{\log N}.
 \end{equation}

 Now, let
 \[I_{s,t}=\beta_N^2\sum_{n=s+1}^t \sum_{i<j\leq q} \mathbf{1}_{S_n^{i} = S_n^j} \quad  \text{and} \quad
 I_k = I_{0,k},\] and define
\begin{equation}
M(X) := \DE_X^{\otimes q}\left[e^{I_{N}}\right] \quad \text{and} \quad  M=\sup_{X\in (\mathbb Z^2)^q} M(X).
\end{equation} By \eqref{eq:momentFormula}, it is enough to have a bound on $M(\mathbf 0)$. In fact what we will give is a bound on $M$. To do so, we let $T=T_N>0$ such that $\log T =o(\sqrt{\log N})$ and introduce the event
\[\tau_{T} := \inf \{n > T : F_n\cup K_n \, \mbox{\rm occurs}\}.\]
%\exists (\balpha,\bbeta,\bgamma) : \balpha< \bbeta<\bgamma \leq q, \text{ such that }  S_n^\balpha = S_n^\bbeta = S_n^\bgamma\}.\]
We then decompose $M(X)$ as follows:
\begin{equation} \label{eq:defAB}
M(X)  = \DE_X^{\otimes q}\left[e^{I_{N}} \mathbf{1}_{\tau_T \leq N}\right] + \DE_X^{\otimes q}\left[e^{I_{N}} \mathbf{1}_{\tau_T >N}\right] =: A(X)+ B(X).
\end{equation}
We start by bounding $B(X)$ from above. Let $c$ be as in Theorem \ref{th:momentwithoutTriple}. By Markov's property,
\begin{align} 
 \sup_{X\in (\mathbb Z^{2})^q} B(X) & \leq \sup_{X\in (\mathbb Z^{2})^q} \DE^{\otimes q}_X[e^{I_{T}}] \sup_{Y\in (\mathbb Z^{2})^q} \DE_Y^{\otimes q} [e^{I_{N-T}} \mathbf{1}_{\tau_0 > N-T}] \nonumber\\
  & \leq e^{\frac{1}{\pi}(1+\varepsilon_N)q^2 \beta_N^2 \log T} e^{{\binom{q}{2}\lambda_{N-T,N}^2}
 (1+cq^{-1/2}+o(1))} \nonumber\\
 & \leq  e^{{\binom{q}{2} \lambda^2}
 (1+cq^{-1/2}+o(1))},\label{eq:MarkovOnB}
\end{align}
where in the second inequality, we used Lemma \ref{lem:aprioriBound} and
Theorem \ref{th:momentwithoutTriple} and in the last inequality, we used that $\beta_N^2\log T$ vanishes as $N\to\infty$ and that $\lambda_{N-T,N}^2 <\lambda^2$. 
%\sout{Note that the constant $C$ depends on $\hat \beta$ and might depend on $q_0$ in the constant case $q=q_0$ (because of Theorem \ref{th:momentwithoutTriple})}.

We will now deal with $A(X)$ and show that
\begin{equation} \label{eq:boundFinalAX}
\sup_{X\in (\mathbb Z^2)^q} A(X) \leq M \varepsilon_N,
\end{equation}
with $\varepsilon_N\to 0$.
This, together with \eqref{eq:defAB} and \eqref{eq:MarkovOnB} implies that
\[M(1-\varepsilon_N) \leq  e^{{\binom{q}{2} \lambda^2}(1+cq^{-1/2}+o(1))},
\]
%\sout{Absorbing the constant $C$ in the $o(1)$ term in the case
%that $q\to\infty$,} \CC{
which entails \eqref{eq:firstStepQmoment}.

Toward the proof of \eqref{eq:boundFinalAX},
we first use Markov's property to obtain that
\begin{equation*}A(X)  = \sum_{k=T}^N \DE_X^{\otimes q}[e^{I_{k}+I_{k ,N}} \mathbf{1}_{\tau_T = k}] \leq M  \sum_{k=T}^N \DE_X^{\otimes q}[e^{I_{k}} \mathbf{1}_{\tau_T = k}] .
\end{equation*}
In what follows, for $\mathcal T\subset \mathbb N$ we use the phrase
 "no triple+ in  $\mathcal T$" to denote the event
 $(\cup_{n\in \mathcal T} (F_n \cup K_n))^\complement$. Similarly, 
 for $\mathcal I\subset \llbracket 1,q\rrbracket$ we use the phrase
 "no triple+ in $\mathcal T$ for particles of $\mathcal I$"  to denote the event
 $(\cup_{n\in \mathcal T} (F_n^{\mathcal I} \cup K_n^{\mathcal I}))^\complement$ %\sout{$(\cup_{n\in \mathcal I} (F_n \cup K_n))^\complement$} 
 where $F_n^{\mathcal I}$ and $G_n^{\mathcal I}$ are defined as $F_n$ and $K_n$ but with $\llbracket 1,q\rrbracket$ replaced by $\mathcal I$. 
We then decompose over which event, $F_n$ or $K_n$, occurred at $\tau_T$, and then over which particles
participated in the event:
\begin{equation} \label{eq:AXbound2}
\begin{aligned}
  A(X)&\leq M \sum_{\balpha,\bbeta,\bgamma \leq q} \sum_{k=T}^N \DE_X^{\otimes q}\left[e^{I_{k}} \mathbf{1}_{\text{no triple+ in $\llbracket T, k-1\rrbracket$}}\, \mathbf{1}_{S^\balpha_k = S^\bbeta_k = S^\bgamma_k}\right]\\
  & + M\sum_{(\balpha<\bbeta)\neq(\bgamma<\bdelta)} \sum_{k=T}^N \DE_X^{\otimes q}\left[e^{I_{k}} \mathbf{1}_{\text{no triple+ in $\llbracket T, k-1\rrbracket$}}\, \mathbf{1}_{S^\balpha_k = S^\bbeta_k,   S^\bgamma_k=S^\bdelta_k}\right],\\
&=: M(A_1(X)+A_2(X)).
\end{aligned}
\end{equation}
By \eqref{eq:AXbound2}, it is enough to prove that $A_1(X)$ and $A_2(X)$ vanish uniformly in $X$ as $N\to\infty$ in order to obtain \eqref{eq:boundFinalAX}.
We next show that $A_1(X)$ vanishes, the argument for $A_2(X)$ is similar.
We bound $I_k$ by
%as:} \CC{(this is only an inequality as $\mathbf{1}_{S_n^{\bar \alpha}=S_n^{\bar \beta}}$ appears two times.)}
\[I_{k} \leq J_{k} + J^\balpha_{k} +J^\bbeta_{k} +J^\bgamma_{k} ,\]
where
\begin{gather*}J_{k} = \beta_N^2 \sum_{n=1}^k \sum_{\substack{i<j\leq q\\i,j\notin\{\balpha,\bbeta,\bgamma\}}} \mathbf{1}_{S^i_n=S^j_n} \quad\text{and}\quad
J^{i_0}_{k} = \beta_N^2\sum_{n=1}^k \sum_{j\in\llbracket 1,q\rrbracket \setminus \{i_0\} } \mathbf{1}_{S^{i_0}_n=S^j_n}.
\end{gather*}
If we let $\frac{1}{a} + \frac{3}{b} = 1$ with $1<a\leq 2$ and $1<b$, we have \begin{align}
  &\DE_X^{\otimes q}
  \left[e^{I_{k}} \mathbf{1}_{\text{no triple+ in $\llbracket T, k-1\rrbracket$}}\, \mathbf{1}_{S^\balpha_k = S^\bbeta_k = S^\bgamma_k}\right] \nonumber\\
  &\leq \DE_X^{\otimes q}\left[e^{aJ_{k}} \mathbf{1}_{\text{no triple+ in $\llbracket T, k-1\rrbracket$ for particles of $\llbracket1,q\rrbracket \setminus \{\balpha,\bbeta,\bgamma\}$}}\, \mathbf{1}_{S^\balpha_k = S^\bbeta_k = S^\bgamma_k}\right]^{1/a} \label{eq:boundNotripleHolder}\\
  &\times \prod_{i_0\in \{\balpha,\bbeta,\bgamma\}} \DE_X^{\otimes q}\left[e^{bJ^{i_0}_{k}} \mathbf{1}_{S^\balpha_k = S^\bbeta_k = S^\bgamma_k}\right]^{1/b}. \label{eq:boundNotripleHolder2}
\end{align}

We treat separately the two quantities \eqref{eq:boundNotripleHolder} and \eqref{eq:boundNotripleHolder2}. Before doing so,
we specify  our choice of $a,b$ and $\hat \beta$. We assume that $\hat \beta^2 < 1/72$ and $a<3/2$, with $a$ close enough to $3/2$ (and so $b$ close to $9$) in such a way that
\begin{equation} \label{eq:conditions}
\begin{aligned}
&(i)\ 8b\hat \beta^2 < 1, \quad (ii)\ \limsup_{N\to\infty} \frac{1}{\pi} q^2 \beta_N^2 =:\rho_0 < 1/a \quad \text{and} \\
&\quad (iii)\ \limsup_{N\to \infty} \frac{\hat \beta^2}{1-a\hat \beta^2} \frac{\binom q 2}{\log N}(1+cq^{-1/2}) =:\rho_1 < 1/a.
\end{aligned}
\end{equation}
Note that (ii) and (iii) are assured to hold for $a$ close enough to $3/2$ thanks to the assumption \eqref{eq:condition20nr} which implies that $\limsup_N \pi^{-1}q^2 \beta_N^2 \leq \frac{2}{3}$ since $\beta_N^2\sim \pi \hat \beta^2/\log N$. We chose $\hat \beta^2 < 1/72$ to allow (i).

We first bound \eqref{eq:boundNotripleHolder}.
If $k \leq e^{(\log N)^{1/3}}$, then, using that $J_k$ does not
depend on the $\bar \alpha, \bar \beta,\bar \gamma$ particles,  \eqref{eq:boundNotripleHolder} is bounded by
\begin{align*}
  &\DE_X^{\otimes q}\left[e^{aJ_{k}} \right]^{1/a} \DP_{(x^{\bar\alpha},x^{\bar \beta},x^{\bar \gamma})}^{\otimes 3}\left(S^\balpha_k = S^\bbeta_k = S^\bgamma_k\right)^{1/a}\leq C e^{\frac{1}{\pi} (1+\varepsilon_N) q^2 a \beta_N^2 (\log (k+1))/a} k^{-2/a},
\end{align*}
for some $c>0$ and uniformly in $X\in (\mathbb Z^2)^q$, where we have used in the inequality Lemma \ref{lem:aprioriBound} 
and that  $\sum_{x} p_{k}(x)^3 \leq (p_k^\star)^2 \leq k^{-2}$ by \eqref{eq:pnstar}.
For $k \geq e^{(\log N)^{1/3}}$, we rely on \eqref{eq:momentToT} to bound 
\eqref{eq:boundNotripleHolder} by
\begin{align*}
  & \DP_{(x^{\bar \alpha},x^{\bar \beta},x^{\bar \gamma})}^{\otimes 3}\left(S^\balpha_k = S^\bbeta_k = S^\bgamma_k\right)^{1/a}\\
&\qquad \times \DE_X^{\otimes q}\left[e^{aJ_{k}} \mathbf{1}_{\text{no triple+ in $\llbracket T, k-1\rrbracket$ for particles in $\llbracket1,q\rrbracket \setminus \{\balpha,\bbeta,\bgamma\}$}}
\,\right]^{1/a} \\
& \leq   C k^{-2/a} \left( \DE_X^{\otimes q}\left[e^{aJ_{T}}\right] \sup_{Y} \DE_Y^{\otimes (q-3)}\left[e^{aJ_{k-T-1}} \mathbf{1}_{\text{no triple+ in $\llbracket 1, k-T-1\rrbracket$
}}\right]\right)^{1/a}
\\
&\leq C e^{\frac{1}{\pi}(1+\varepsilon_N) q^2 a \beta_N^2 (\log T)/a} e^{{\binom{q}{2}(1+cq^{-1/2}+\varepsilon'_N) \lambda_{k,N}^2(\sqrt a \hat \beta)  /a}} k^{-2/a},
\end{align*}
where we have used that $J_{k-T}\leq C + J_{k-T-1}$ by \eqref{eq:condition20nr}.
For \eqref{eq:boundNotripleHolder2}, we apply the Cauchy-Schwarz inequality to find that
\begin{equation} \label{eq:CS3}
  \DE_X^{\otimes q}\left[e^{bJ^{i_0}_{k}} \mathbf{1}_{S^\balpha_k = S^\bbeta_k = S^\bgamma_k}\right]^{1/b}\leq \DE_X^{\otimes q}\left[e^{2bJ^{i_0}_{k}} \right]^{1/2b}  k^{-1/b},
\end{equation}
where we again used that $\sum_{x} p_{k}(x)^3 \leq (p_k^\star)^2 \leq k^{-2}$ by \eqref{eq:pnstar}.
Now observe that by conditioning on $S^{i_0}$, we have
\begin{align*}
  \DE_X^{\otimes q}\left[e^{2bJ^{i_0}_{k}}\right] & \leq \sup_{y\in \mathbb Z^2} \DE^{S^1}_{y} \left[\sup_{x\in \mathbb Z^2} \DE^{S^2}_{x} \left[e^{2b \beta_N^2\sum_{n=1}^k \mathbf{1}_{S_k^{1} = S_k^2}} \right]^{q-1} \right],
\end{align*}
where uniformly on all nearest neighbor walks $Z\in {\mathcal Z}$, 
%\CC{(it was $2b\beta_N^2$ before)}
\[ \left(e^{2b\beta_N^2} - 1\right) \sup_{x\in \mathbb Z^2} \DE_{x}\sum_{n=1}^k \mathbf{1}_{S_n = Z_n}\leq 4(1+o(1))b  \hat \beta^2 \frac{\log (k+1)}{\log N}
\leq 8b  \hat \beta^2 \frac{\log (k+1)}{\log N}\]
for all $N$ large,
because $\beta_N^2\sim \pi \hat \beta^2/\log N$ and 
$\sup_x p_n(x)\leq 2/(\pi n)$, see \eqref{eq-RNas} and \eqref{eq:pnstar}.
Hence by
Lemma \ref{lem:modKhas} with   \eqref{eq:conditions}-(i),
 \begin{align*}
   \sup_{X\in (\mathbb Z^2)^q}\DE_X^{\otimes q}\left[e^{bJ^{i_0}_{k}} \mathbf{1}_{S^\balpha_k = S^\bbeta_k = S^\bgamma_k}\right]^{1/b}& \leq \left(\frac{1}{1-8 b\hat \beta^2 \frac{\log (k+1)}{\log N}}\right)^{(q-1)/2b} k^{-1/b}\\
 & \leq e^{c \frac{\log (k+1)}{\sqrt{\log N}}} k^{-1/b},
 \end{align*}
 for some universal constant $c>0$, using \eqref{eq:condition20nr}.

%\sout{Coming back to \eqref{eq:AXbound2},} 
We thus find that
\begin{equation}
\label{eq:3rdboundAX}
 \begin{aligned}
& \sup_{X\in (\mathbb Z^2)^q} A_1(X) \leq  q^3   \sum_{k=T}^{\lfloor e^{(\log N)^{1/3}} \rfloor}
  e^{\frac{1}{\pi}(1+\varepsilon_N) q^2 \beta_N^2 \log (k+1)} k^{-2/a}   e^{3c \frac{\log (k+1)}{\sqrt{\log N}}}  k^{-3/b}\\
  &+ C q^3 e^{\frac{1}{\pi}(1+\varepsilon_N) q^2  \beta_N^2 \log T} \times\\
  &\times  \sum_{k=\lfloor e^{(\log N)^{1/3}} \rfloor}^N
  e^{{\lambda_{k,N}^2(\sqrt a \hat \beta) \binom{q}{2}(1+cq^{-1/2}+\varepsilon'_N)/a}} k^{-2/a}   e^{3c \frac{\log (k+1)}{\sqrt{\log N}}}  k^{-3/b}.
 \end{aligned}
\end{equation}
We now prove that the two terms on the right-hand side of \eqref{eq:3rdboundAX} vanish as $N\to\infty$.
By \eqref{eq:conditions}-(ii),(iii), 
there exists $\delta>0$ 
satisfying $\delta< 1/a-\max (\rho_0,\rho_1)$, and therefore
%such that
the first sum in the right-hand side of \eqref{eq:3rdboundAX} can be bounded by
\begin{align*}
 q^3 \sum_{k=T}^{\lfloor e^{(\log N)^{1/3}} \rfloor}   k^{-1-\delta} \leq Cq^3 T^{-\delta},
\end{align*}
for $N$ large enough.
 Hence, we can set $T = \lfloor e^{(\log N)^{1/4}} \rfloor $ (which satisfies $\log T = o(\sqrt{\log N})$), so that $q^3 T^{-\delta}\to 0$ as $N\to\infty$.
Relying on \eqref{eq:bounds_lambda}, the second sum {in \eqref{eq:3rdboundAX}} is bounded by
\begin{align*}
&C q^3 e^{c '  \log T} \sum_{k=\lfloor e^{(\log N)^{1/3}} \rfloor}^N
e^{{\frac{\hat\beta^2}{1-a\hat \beta^2}\frac{\binom{q}{2}(1+cq^{-1/2}+\varepsilon'_N)}{\log N}}
{\log (k+1)}}    e^{3c \frac{\log (k+1)}{\sqrt{\log N}}}  k^{-1-1/a}\\
  & \leq C q^3  e^{c' \log T}  \sum_{k=\lfloor e^{(\log N)^{1/3}} \rfloor}^N k^{-1-\delta/2}
  \leq C q^3 e^{-\frac{\delta}{2} (\log N)^{1/3} + c  (\log N)^{1/4}},
\end{align*}
for some $c'>0$ (recall that  $\delta < 1/a - \rho_1$).
Then the quantity in the last line vanishes as $N\to\infty$.   (Note that we decomposed the sum for $k\geq e^{(\log)^{1/3}}$ and let $\log T = (\log N)^{1/4}$ to ensure that $(\log N)^{1/3}\gg (\log N)^{1/4}$ in the last display). By \eqref{eq:3rdboundAX} we have thus proven that $\lim_{N\to\infty}\sup_{X\in (\mathbb Z^2)^q} A_1(X)=  0$. 

When dealing with $A_2$, we have to use H\"{o}lder's inequality as in \eqref{eq:boundNotripleHolder}, \eqref{eq:boundNotripleHolder2} with $4$ particles instead of $3$, so in this case we can choose $a\sim 3/2$ and $b\sim 12$, and the condition (i) in \eqref{eq:conditions} is satisfied with the restriction $\hat \beta^2 < 1/96$. The rest of the argument follows the same line as for $A_1$ 
and we get that $\lim_{N\to\infty}\sup_{X\in (\mathbb Z^2)^q} A_2(X)=  0$. 

 As a result, we have shown that \eqref{eq:firstStepQmoment} holds. This proves \eqref{eq:estimate} when $q\to\infty$.
 When $q=q_0$, 
 %\sout{although the constant $C$ in \eqref{eq:firstStepQmoment}
%might depend on $q_0$, it still} \CC{
\eqref{eq:firstStepQmoment} yields that $W_N$ is bounded in any $L^p$, $p>1$. This fact combined with \eqref{eq:pointwiseGaussianCV} implies the convergence \eqref{eq-convergencepol} for all fixed $q$, which implies that \eqref{eq:estimate} holds in the case $q=q_0$ as well.

We now turn to the general case, where we only assume that $q(N)$  satisfies \eqref{eq:condition20nr}. Suppose that \eqref{eq:estimate} does not hold, so that we can find $\varepsilon_0>0$ and a subsequence $q_N'=q(\varphi(N))$ such that
\begin{equation} \label{eq:absurd}
\forall N\in\mathbb N,\quad \IE W_N^{q_N'} > e^{\lambda^2 \binom{q_N'}{2}(1+\varepsilon_0)}.
\end{equation}

 One can distinguish two cases. If $q_N'$ is bounded, then up to extracting a sub-sequence, we can suppose that $q_N'$ converges to some $q_0\geq 2$. Then, one can check that by \eqref{eq:pointwiseGaussianCV}, we must have $\IE W_N^{q_N'} \to e^{\lambda^2 \binom{q_0}{2}}$ (for example, using Skorokhod's representation theorem and Vitali's convergence theorem with the fact that $W_N$ is bounded in any $L^p$). But this is impossible by \eqref{eq:absurd}. On the other hand, if $q'_N$ is not bounded, up to extracting a subsequence we can suppose that $q'_N\to\infty$. But then \eqref{eq:absurd} cannot be true because \eqref{eq:estimate} holds with $q=q'_N\to\infty$. Therefore \eqref{eq:estimate} must hold for any sequence $q(N)$ that satisfies \eqref{eq:condition20nr}.
\qed

\subsection{On Remark \ref{rem-1.2}}
\label{subsec-rem1.2}
We describe the changes needed for obtaining the claim in Remark
\ref{rem-1.2}. Recall the definitions of $F_n$ and  $K_n$,
see \eqref{eq-Fn} and \eqref{eq-Kn}, and \eqref{eq:momentFormula}.
Set
\begin{align*}
  &A_N=\sum_{n=1}^N \mathbf{1}_{(F_n\cup K_n)^\complement}
  \sum_{1\leq i<j\leq q} \mathbf{1}_{S_n^{i} = S_n^j},\\
  &B_N =   \sum_{n=1}^N \mathbf{1}_{F_n} \sum_{1\leq i<j\leq q} \mathbf{1}_{S_n^{i} = S_n^j}, \quad C_N=
  \sum_{n=1}^N \mathbf{1}_{K_n} \sum_{1\leq i<j\leq q} \mathbf{1}_{S_n^{i} = S_n^j}
\end{align*}
  Note that for any $u_N\geq 1$, we can check that
  $\DE^{\otimes q}_X \left[e^{u_N \beta_N^2 A_N}\right]$ is bounded above by $\Psi_{N,q}(X)$ of \eqref{eq:defPsiN}
  with $T=N$ and $\beta_N$
  replaced by $\beta_N u_N$.
Using H\"older's inequality
it is enough to show (together with
the proof of Theorem \ref{th:momentwithoutTriple}, which actually controls
$\sup_{X}\Psi_{N,q}(X)$) that
for any $\hat \beta<1$ and $q_N=o(\log N/\log \log N)$, there exist
$v_N\to\infty$ so that
\begin{equation} \label{eq:Holder}
 \sup_X \DE^{\otimes q}_X \left[e^{v_N \beta_N^2 B_N}\right]^{1/v_N} \to_{N\to\infty} 1,
  \quad \sup_X \DE^{\otimes q}_X
  \left[e^{v_N \beta_N^2 C_N}\right]^{1/v_N} \to_{N\to\infty} 1.
\end{equation}
We sketch the proof of the first limit in \eqref{eq:Holder}, the proof of the second is similar.
By Corollary \ref{cor:discrete_Khas} (applied on the space of $q$-tuples of path, 
with $f(Y_n)=v_N \beta_N^2\sum_{1\leq i<j \leq q} 
{\bf 1}_{S^i_n=S^j_n} {\bf 1}_{F_n}$),
%Lemma \ref{lem:modKhas}, 
it suffices to show that \begin{equation} \label{eq:khasCondition}
 \limsup_{N\in \mathbb N} \sup_{X \in \mathbb Z^q}  \DE_X^{\otimes q}[v_N\b_N^2 B_N] = 0.
\end{equation}
To see
\eqref{eq:khasCondition}, fix
$K\in \llbracket 1,N\rrbracket$. By \eqref{eq:pnstar},
we have that
  \begin{align}
  \DE_X^{\otimes q} \sum_{n=1}^N \mathbf{1}_{F_n} \sum_{1\leq i<j\leq q} \mathbf{1}_{S_n^{i} = S_n^j}
  \leq \sum_{n=1}^K \frac{q(q-1)}{2} \frac{C}{n} + \sum_{n=K+1}^N \sum_{1\leq i<j\leq q}  \DE_X^{\otimes q} \mathbf{1}_{F_n}  \mathbf{1}_{S_n^{i} = S_n^j}.\label{eq:boundRemoveTripleStep0}
 \end{align}
 For $i<j\leq q$ and $r\in \{0,1,2\}$, further denote
\[ F_n^{i,j;r} = \{\exists (\bar\alpha,\bar\beta,\bar\gamma) :
\bar \alpha< \bar\beta<\bar\gamma \leq q,  S_n^{\bar \alpha} = S_n^{\bar \beta}
= S_n^{\bar \gamma}, |\{\bar\alpha,\bar\beta,\bar\gamma\}\cap \{i,j\}| = r\}.\]
We have that
\[
\sum_{1\leq i<j\leq q}  \DE_X^{\otimes q} \mathbf{1}_{F_n}  \mathbf{1}_{S_n^{i} = S_n^j} = \sum_{1\leq i<j\leq q}  \DE_X^{\otimes q} \sum_{r=0}^2 \mathbf{1}_{F_n^{i,j;r}}  \mathbf{1}_{S_n^{i} = S_n^j}.
\]
We first focus on the term $r=0$. By independence, \eqref{eq:pnstar}
and the union bound,
\begin{align*}\sum_{1\leq i<j\leq q}  \DE_X^{\otimes q} \mathbf{1}_{F_n^{i,j;0}}  \mathbf{1}_{S_n^{i} = S_n^j} & \leq \sum_{1\leq i<j\leq q}  \frac{C}{n} \sum_{\bar\alpha<\bar\beta<\bar\gamma\leq q} \sup_{x_i\in \mathbb Z^2} \sum_{y\in \mathbb Z^2} \prod_{i=1}^3 \DP_{x_i}(S_n = y)
 \leq  \frac{Cq^5}{n^3}\,.
\end{align*}
When $r=1$, the condition in the indicator function becomes that there exist $\bar \alpha < \bar\beta \leq q$ such that $S_n^i = S_n^j = S_n^{\bar \alpha} = S_n^{\bar \beta}$. Hence, the term for $r=1$ is bounded by
\[
\sum_{1\leq i<j\leq q}  \sum_{\bar \alpha<\bar \beta\leq q}
\sup_{x_i\in {\mathbb Z}^2}\sum_{y\in \mathbb Z^2} \prod_{i=1}^4\DP_{x_i}(S_n = y)\\
 \leq  \frac{Cq^4}{n^3}\,.
\]
Similarly, we can bound the term for $r=2$ by a constant times $q^3/n^2$. Using \eqref{eq:boundRemoveTripleStep0}, we find that for all $K \in \llbracket 1,N\rrbracket$,
\begin{equation} \sup_{X \in \mathbb Z^q}  \DE_X^{\otimes q}[v_N\b_N^2 B_N] \leq \frac {Cv_N \hat \beta^2} {\log N} \left(  \frac{q(q-1)}{2}  \log K + \frac{q^5}{K^2} + \frac{q^4}{K^2} +  \frac{q^3}{K}\right).
\end{equation}
For $K=\left \lfloor (\log N)^{3/4} \right\rfloor$, and $q^2 = o(\log N / \log \log N)$, we find that \eqref{eq:khasCondition} holds with
a well-chosen $v_N\to\infty$.

\section{No triple intersections - Proof of Theorem \ref{th:momentwithoutTriple}}
\label{sec-notriple}
Recall that $T\in \llbracket 1,N\rrbracket$.
For compactness of notation in the rest of the paper, set 
\begin{equation}\label{eq:defSigmaN} \sigma_N^2= \sigma_N^2(\hat \beta) = e^{\b_N^2}-1.
\end{equation}
By \eqref{eq-RNas},
there exist
$\delta_N=\delta(N,\hat \b)$ and $\delta'_N=\delta'(N,\hat \b)$ that vanish as $N\to\infty$ such that
\begin{equation} \label{eq:LLTsigmaN}
  \sigma_N^2 = \frac{\hat \beta^2}{R_N}(1+\delta_N) = \frac{ \pi \hat \beta^2}{\log N} (1+\delta'_N).
\end{equation}
\subsection{Expansion in chaos} \label{sec:chaos} In this section, we show that the moment without triple intersections can be bounded by a rather simple expansion. Introduce the following notation:
for $\mathbf n = (n_0,n_1,\dots,n_k)$ and $\mathbf x = (x_0,x_1,\dots,x_k)$, let $p_{\mathbf n,\mathbf x} = \prod_{i=1}^k p(n_{i}-n_{i-1},x_i-x_{i-1})$. 
Let  $\mathcal C_q = \{(i,j)\in \llbracket 1,q\rrbracket^2 : i<j\}$.
\begin{proposition} \label{prop:decInChaos}
For all $X=(x_0^1,\dots,x_0^q)\in (\mathbb Z^2)^q$, we have
\begin{equation} \label{eq:lessthanPsi}
\DE_X^{\otimes q} \left[e^{\beta_N^2 \sum_{n=1}^T \sum_{1\leq i<j\leq q}  \mathbf{1}_{S_n^{i} = S_n^j}}\mathbf{1}_{G_T}\right] \leq \Psi_{N,q}(X),
\end{equation}
where
%, recalling \eqref{eq:defSigmaN} for the definition of $\sigma_N$,
\begin{equation}
\begin{aligned}
 &\Psi_{N,q}(X) =\\
 & \sum_{k=0}^\infty \sigma_N^{2k} \sum_{\substack{1\leq n_1<\dots<n_k\leq T\\
 (i_r,j_r)_{r\leq k}\in \mathcal C_q^k\\ \mathbf x^1 \in (\mathbb Z^2)^{k},\dots,\mathbf x^q \in (\mathbb Z^2)^{k}}} \prod_{r=1}^k \mathbf{1}_{x_r^{i_r} = x_r^{j_r}} \prod_{i=1}^q p_{(0,n_1,\dots,n_k),(x_0^i,\mathbf x^i)},
 \end{aligned}
 \label{eq:defPsiN}
\end{equation}
where we recall \eqref{eq:defSigmaN} for the definition of $\sigma_N$.
%and
%\begin{equation}\label{eq:defSigmaN} \sigma_N^2= \sigma_N^2(\hat \beta) = e^{\b_N^2}-1.
%\end{equation}
\end{proposition}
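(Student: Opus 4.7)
The plan is to prove the bound by a direct chaos expansion that exploits the definition of $G_T$. The first step will be to write the exponential as a product: since each indicator $\mathbf{1}_{S_n^i=S_n^j}$ is $\{0,1\}$-valued and $\sigma_N^2=e^{\beta_N^2}-1$, one has exactly
\[
e^{\beta_N^2\sum_{n=1}^T\sum_{(i,j)\in\mathcal C_q}\mathbf{1}_{S_n^i=S_n^j}}=\prod_{n=1}^T\prod_{(i,j)\in\mathcal C_q}\bigl(1+\sigma_N^2\mathbf{1}_{S_n^i=S_n^j}\bigr),
\]
and expanding this finite product over all subsets yields $\sum_{\mathcal S}\sigma_N^{2|\mathcal S|}\prod_{(n,i,j)\in\mathcal S}\mathbf{1}_{S_n^i=S_n^j}$, indexed by subsets $\mathcal S\subset\llbracket 1,T\rrbracket\times\mathcal C_q$.

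Next I would use the event $G_T$ to reduce to subsets whose time coordinates are pairwise distinct. If $(n,i,j)$ and $(n,i',j')$ both belong to $\mathcal S$ with $(i,j)\neq(i',j')$, then on $G_T$ the product of the two corresponding indicators vanishes: either $\{i,j\}\cap\{i',j'\}\neq\emptyset$, which forces the coincidence of three particles at time $n$ (event $F_n$), or the two pairs are disjoint (event $K_n$). Either way the term is killed by $\mathbf{1}_{G_T}$. The surviving configurations can therefore be parametrized by $k\geq 0$, times $n_1<\dots<n_k\leq T$ and pairs $(i_r,j_r)\in\mathcal C_q$. Bounding $\mathbf{1}_{G_T}\leq 1$ on these surviving terms yields
\[
\DE_X^{\otimes q}\!\left[e^{\beta_N^2\sum\mathbf{1}}\mathbf{1}_{G_T}\right]\leq\sum_{k\geq 0}\sigma_N^{2k}\!\sum_{\substack{n_1<\dots<n_k\leq T\\(i_r,j_r)_{r\leq k}\in\mathcal C_q^k}}\!\DE_X^{\otimes q}\!\left[\prod_{r=1}^k\mathbf{1}_{S_{n_r}^{i_r}=S_{n_r}^{j_r}}\right].
\]

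The last step is to evaluate the remaining expectation by Fubini. For each particle $i\in\llbracket 1,q\rrbracket$ I would introduce its positions $\mathbf x^i=(x_1^i,\dots,x_k^i)\in(\mathbb Z^2)^k$ at the times $n_1<\dots<n_k$; by independence of the $q$ random walks and the Markov property, the joint law of $(S_{n_r}^i)_{i\leq q,\,r\leq k}$ starting from $X$ factorizes as $\prod_{i=1}^q p_{(0,n_1,\dots,n_k),(x_0^i,\mathbf x^i)}$, and the matching constraints become $\prod_r\mathbf{1}_{x_r^{i_r}=x_r^{j_r}}$. Summing over $\mathbf x^1,\dots,\mathbf x^q$ reproduces exactly the definition of $\Psi_{N,q}(X)$ in \eqref{eq:defPsiN}. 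I do not expect any real obstacle: the only mildly delicate step is the observation in the second paragraph that the combined exclusion of $F_n$ and $K_n$ is precisely what is needed to linearize $\prod_{(i,j)}(1+\sigma_N^2\mathbf{1}_{S_n^i=S_n^j})$ into a single pair per time, and this is immediate from unpacking the definitions \eqref{eq-Fn}--\eqref{eq-Kn}.
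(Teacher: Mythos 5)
Your argument is correct and follows essentially the same route as the paper's: expand $e^{\beta_N^2\mathbf 1}=1+\sigma_N^2\mathbf 1$ into a product, use $G_T$ (via $F_n$ and $K_n$) to kill any term with two distinct pairs at the same time, bound $\mathbf 1_{G_T}\le 1$, and factorize the remaining expectation over the $q$ independent walks. The only cosmetic difference is that you spell out why repeated-time terms vanish where the paper states it in one line.
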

\noindent
(By convention, here and throughout the paper, the term $k=0$ in sums as \eqref{eq:defPsiN} equals $1$.)
\begin{proof}
%\sout{We will use here the lexicographical ordering of 3-tuples $(n,i,j)$ and use the shorthand notation $(n_1,i_1,j_1)<\dots<(n_k,i_k,j_k)$ to denote a collection of $k$ 3-tuples satisfying $(n_1,i_1,j_1)<\dots<(n_k,i_k,j_k) \in \llbracket 1,T\rrbracket \times \llbracket 1,q\rrbracket^2$ with $i_r<j_r$ for all $r\leq k$.}
For brevity, we write $G$ for $G_T$.
For $X=(x_0^1,\dots,x_0^q)\in (\mathbb Z^2)^q$, using the identity $e^{\beta_N^2  \mathbf{1}_{S_n^{i} = S_n^j}} -1 =  \sigma_N^2 \,  \mathbf{1}_{S_n^{i} = S_n^j}$,
\begin{align*}
M_{N,q}^{\text{no triple}}(X) &:= \DE_X^{\otimes q} \left[e^{\beta_N^2 \sum_{n=1}^T \sum_{1\leq i<j\leq q}  \mathbf{1}_{S_n^{i} = S_n^j}}\mathbf{1}_{G}\right] \\
& = \DE_X^{\otimes q} \left[\prod_{n\in\llbracket 1,T\rrbracket,(i,j)\in\mathcal C_q} \left(1+\sigma_N^2 \,  \mathbf{1}_{S_n^{i} = S_n^j} \right)\mathbf{1}_{G}\right].
\end{align*}

Expand the last product to obtain that:
\begin{equation*}
M_{N,q}^{\text{no triple}}(X)  = \sum_{k=0}^\infty \sigma _N^{2k} \sum_{{\substack{(n_r,i_r,j_r)\in \llbracket 1,T\rrbracket\times \mathcal C_q, r=1,\dots,k\\(n_1,i_1,j_1)<\dots<(n_k,i_k,j_k)}}} \DE_X^{\otimes q} \left[\prod_{r=1}^k \mathbf{1}_{S_{n_r}^{i_r} = S_{n_r}^{j_r}}\mathbf{1}_{G}\right],
\end{equation*}
where we have used the lexicographic ordering on 3-tuples $(n,i,j)$.
Since there are no triple or more particle intersections on the event $G$, the sum above can be restricted to 3-tuples $(n_r,i_r,j_r)_{r\leq k}$ such that $n_r < n_{r+1}$ for all $r < k$. Hence,
\begin{align*}
M_{N,q}^{\text{no triple}}(X)& = \sum_{k=0}^\infty \sigma_N^{2k} \sum_{1\leq n_1<\dots<n_k\leq T,(i_r,j_r)_{r\leq k}\in \mathcal C_q^k} \DE_X^{\otimes q} \left[\prod_{r=1}^k  \mathbf{1}_{S_{n_r}^{i_r} = S_{n_r}^{j_r}} \mathbf{1}_{G}\right]\\
& \leq \Psi_{N,q}(X),
\end{align*}
where $\Psi$ is defined in \eqref{eq:defPsiN}, and
where we have bounded $\mathbf{1}_{G}$ by $1$ in the inequality.
\end{proof}
\subsection{Decomposition in two-particle intersections}
In this section, we rewrite $\Psi_{N,q}$  in terms of successive two-particle interactions.  We generalize a decomposition used in \cite[Section 5.1]{CaSuZyCrit18} that was restricted to a third moment computation ($q=3$).
The following notation is borrowed from their paper. Let
\begin{equation}
U_N(n,x) := \begin{cases}
\sigma_N^2 \DE_{0}^{\otimes 2}\left[ e^{ \b_N^2\sum_{l=1}^{n-1} \mathbf 1_{S^1_l = S^2_l}} \mathbf{1}_{S^1_n= S^2_n=x} \right]  & \text{if } n\geq 1,\\
 \mathbf{1}_{x=0} & \text{if } n=0,
\end{cases}
\end{equation}
and
\begin{equation}  \label{eq:def_UN}
\begin{aligned}
U_N(n) & := \sum_{z \in\mathbb Z^2} U_N(n,z)
= \begin{cases}
\sigma_N^2 \DE_{0}^{\otimes 2}\left[ e^{ \b_N^2\sum_{l=1}^{n-1} \mathbf 1_{S^1_l = S^2_l}} \mathbf{1}_{S^1_n= S^2_n} \right] & \text{if } n\geq 1,\\
1 & \text{if } n=0.
\end{cases}
\end{aligned}
\end{equation}
Observe that, by the identity $e^{\beta_N^2  \mathbf{1}_{S_l^{1} = S_l^2}} -1 =  \sigma_N^2 \,  \mathbf{1}_{S_l^{1} = S_l^2}$, one has for all $n\geq 1$,
\begin{equation}\label{eq:chaosSimple}
\begin{aligned}
&\DE_{0}^{\otimes 2}\left[ e^{ \b_N^2\sum_{l=1}^{n-1} \mathbf 1_{S^1_l = S^2_l}} \mathbf{1}_{S^1_n= S^2_n=x} \right]  = \DE_{0}^{\otimes 2}\left[ \prod_{l=1}^{n-1} \left(1+\sigma_N^2\mathbf 1_{S^1_l = S^2_l}\right) \mathbf{1}_{S^1_n= S^2_n=x} \right]\\
& = \sum_{k=0}^\infty \sigma_N^{2k} \sum_{n_0=0 < n_1 <\dots < n_{k}< n}  \DE_0^{\otimes 2} \left[ \prod_{r=1}^{k} \mathbf{1}_{S_{n_r}^1 = S_{n_r}^2} \mathbf{1}_{S_{n}^1 = S_{n}^2 = x} \right].
\end{aligned}
\end{equation}
Hence for all $n\geq 1$:
\begin{equation} \label{eq:chaos2}
U_N(n,x) = \sigma_N^2 \sum_{k=0}^\infty \sigma_N^{2k} \sum_{\substack{n_0=0 < n_1 <\dots < n_{k}< n = n_{k+1}  \\x_0=0 ,x_1,\dots,x_{k}\in \mathbb Z^2,x_{k+1}=x}}  \prod_{r=1}^{k+1} p_{n_{r}-n_{r-1}}(x_r-x_{r-1})^2.
\end{equation}

Now, in the sum in \eqref{eq:defPsiN}, we observe that (only) two particles interact at given times $(n_1<\dots< n_k)$. So we define $a_1=n_1$ and $b_1=n_r$ such that $(n_1,n_2,\dots,n_{r})$ are the successive times that verify $(i_1,j_1)=(i_2,j_2)=\dots=(i_r,j_r)$ before a new couple of particles $\{i_{r+1},j_{r+1}\}\neq \{i_1,j_1\}$ is considered, and we let $k_1=r$ be the number of times the couple is repeated. Define then $a_2\leq b_2,a_3\leq b_3,\dots, a_m\leq b_m$ similarly for the next interacting couples, with $m$ denoting the number of alternating couples and 
%\sout{$k_1,\dots,k_m$} 
$k_2,\dots,k_m$ the numbers of times the couples are repeated successively.

Further let $\mathbf{X} = (X_1,\dots,X_m)$ and $\mathbf Y = (Y_1,\dots,Y_m)$
with $X_r = (x_r^1,\dots,x_r^q)$ and $Y_r = {(y_r^1,\dots,y_r^q)}$  denote respectively the positions of the particles at time $a_r$ and $b_r$. We also write  $X=(x_0^p)_{p\leq q}$,
%$x_0^p$ $p\leq q$,
 for the initial positions of the
particles at time $0$.
We call a \emph{diagram} $\mathbf I$ of size $m\in\mathbb N$ any collection of $m$  couples 
%\sout{$\mathbf I = ((i_1<j_1),\dots, (i_m<j_m))$} 
$\mathbf I = (i_r,j_r)_{r\leq m}\in\mathcal C_q^m$ such that $\{i_r,j_r\} \neq \{i_{r+1},j_{r+1}\}$. We denote by  $\mathcal D(m,q)$ the set of all diagrams of size $m$.

If we re-write $\Psi_{N,q}(X)$ according to the decomposition that we just described, we find that:
\begin{align*}
&\Psi_{N,q}(X)=\\
&\sum_{m=0}^\infty  \sum_{\substack{1\leq a_1\leq b_1 < a_2 \leq b_2 <  \dots < a_m \leq b_m\leq T\\
\mathbf X, \mathbf Y \in (\mathbb Z^2)^{m\times q}, (i_r,j_r)_{r\leq m} \in \mathcal D(m,q)}} \,\sum_{k_1 \in \llbracket 1,b_1-a_1 + 1 \rrbracket,\dots,k_m \in \llbracket 1, b_m-a_m + 1\rrbracket} \sigma_N^{2k_1+\dots+2k_m}  \\
& \times \prod_{p\leq q} p_{a_1}(x_{1}^p - x_0^p) 
\Big(\prod_{r=1}^{m-1} \prod_{p\leq q} p_{a_{r+1} - b_r}(x_{r+1}^p - y_r^p)\Big)
\prod_{r=1}^{m} \mathbf{1}_{x_r^{i_r} = x_r^{j_r}} \mathbf{1}_{y_r^{i_r} = y_r^{j_r}}     \\
&  \times  \sum_{\substack{a_r<n_1<\dots<n_{k_r-2} < b_r\\ \mathbf z^1 \in (\mathbb Z^2)^{k_r-2},\dots,\mathbf z^q \in (\mathbb Z^2)^{k_r-2}}} \prod_{s=1}^{k_r-2} \mathbf{1}_{z_s^{i_r} = z_s^{j_r}} \prod_{p\leq q}   p_{(a_r,n_1,\dots,n_{k_r-2},b_r),(x_r^{p},z_1^p,\dots,z_{k_r-2}^p,y_r^p)}.
\end{align*}
See Figure
%\ref{fig:casesshort} and
\ref{fig:caseslong} for a pictorial description of the intersections associated with a diagram.

Summing over all the configurations between time $a_r$ and $b_r$
 gives a  contribution of $\sigma_N^2 \mathbf{1}_{x_r^{i_r} = y_r^{i_r}}$ when $a_r = b_r$, and
\begin{align*}
&\sum_{k=2}^\infty \sigma_N^{2k} \sum_{\substack{n_0=a_r < n_1 <\dots < n_{k-2}< b_r = n_{k-1}  \\x_0=x_r^{i_r} ,x_1,\dots,x_{k-2}\in \mathbb Z^2,x_{k-1}=y_r^{i_r}}}  \prod_{i=1}^{k-1} p_{n_{i}-n_{i-1}}(x_i-x_{i-1})^2\\
& = \sigma_N^2 U_N(b_r-a_r,y_r^{i_r}-x_r^{i_r}),
\end{align*}
when $a_r<b_r$ (in this case $k_r \geq 2$ by definition).
It directly follows that:
\begin{equation}\label{eq:decompambm0}\Psi_{N,q}(X) =  \sum_{m=0}^\infty \sigma_N^{2m} \sum_{\substack{1\leq a_1\leq b_1 < a_2 \leq b_2 <  \dots < a_m \leq b_m\leq T\\
\mathbf X, \mathbf Y \in \mathbb Z^{m\times q}, \mathbf I =(i_r,j_r)_{r\leq m} \in \mathcal D(m,q)}} A_{X,\mathbf a, \mathbf b, \mathbf X,\mathbf Y,\mathbf I},
\end{equation}
where
\begin{align}
  \label{eq-Astar}
A_{X,\mathbf a, \mathbf b, \mathbf X,\mathbf Y,\mathbf I} & =  \prod_{p\leq q} p_{a_1}(x_1^p-x_0^p) \prod_{r=1}^m  U_N(b_r-a_r,y_r^{i_r}-x_r^{i_r})  \mathbf{1}_{x_r^{i_r} = x_r^{j_r}} \mathbf{1}_{y_r^{i_r} = y_r^{j_r}}
\nonumber \\
&\times \prod_{p\notin\{i_r,j_r\}}p(b_r-a_r,y_r^p-x_r^p) \prod_{r=1}^{m-1}\prod_{p\leq q} p(a_{r+1} - b_r, x_{r+1}^p - y_r^p).
\end{align}

We can further simplify the expression \eqref{eq:decompambm0}. Let $\mathbf I=(i_r,j_r)_{r\leq m}\in \mathcal D(m,q)$ 
be any diagram. For all $r\leq m$, denote by ${\bar k^1_{r}}$ the last index $l<r$ such that $i_r\in \{i_l,j_l\}$, i.e.\
${\bar k^1_{r}} = \sup\{l\in \llbracket 1,r-1\rrbracket : i_r \in \{i_l,j_l\}\}$. When the set is empty we set ${\bar k_r^1}=0$. Define ${\bar k^2_{r}}$ similarly for $j_r$ instead of $i_r$ and let ${\bar k_r = \bar
k_r^1 \vee \bar k_r^2}$. See figure \ref{fig:caseslong}.
\begin{proposition} For all $X\in (\mathbb Z^2)^q$,
\begin{equation}\label{eq:decompambm}\Psi_{N,q}(X) =  \sum_{m=0}^\infty \sigma_N^{2m} \sum_{\substack{1\leq a_1\leq b_1 < a_2 \leq b_2 <  \dots < a_m \leq b_m\leq T\\
\mathbf x, \mathbf y\in \mathbb Z^{m}, \mathbf I =(i_r,j_r)_{r\leq m} \in \mathcal D(m,q)}} \tilde A_{X,\mathbf a, \mathbf b, \mathbf x,\mathbf y,\mathbf I},
\end{equation}
where
\begin{align}
  \label{eq-Asstar}
  &\tilde A_{X,\mathbf a, \mathbf b, \mathbf x,\mathbf y,\mathbf I}  =  \prod_{p\in \{i_1,j_1\}} p_{a_1}({x_{1}-x_0^p}) \prod_{r=1}^m  U_N(b_r-a_r,y_r-x_r)
\nonumber \\
&\times \prod_{r=1}^{m-1} p(a_{r+1} - b_{{\bar k}_{r+1}^1}, x_{r+1} - y_{
{\bar k}_{r+1}^1}) p(a_{r+1} - b_{{\bar k}_{r+1}^2}, x_{r+1} -
y_{{\bar k}_{r+1}^2}).
\end{align}
\end{proposition}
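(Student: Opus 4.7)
The plan is to derive \eqref{eq:decompambm} from \eqref{eq:decompambm0} by a bookkeeping argument: I will sum out the positions of every non-interacting coordinate appearing in $A_{X,\mathbf a,\mathbf b,\mathbf X,\mathbf Y,\mathbf I}$, using only the Chapman--Kolmogorov identity
\[
\sum_{z\in\mathbb Z^2}p(n_1,z-x)\,p(n_2,y-z)=p(n_1+n_2,y-x),
\]
together with $\sum_{z}p(n,z)=1$. The indicators $\mathbf{1}_{x_r^{i_r}=x_r^{j_r}}$ and $\mathbf{1}_{y_r^{i_r}=y_r^{j_r}}$ in \eqref{eq-Astar} already collapse the two coordinates of the active pair at times $a_r$ and $b_r$ to a common value, which I rename $x_r$ and $y_r$; this is exactly the new integration variable $\mathbf x,\mathbf y\in(\mathbb Z^2)^m$ of \eqref{eq:decompambm}, and the factor $U_N(b_r-a_r,y_r^{i_r}-x_r^{i_r})$ becomes the $U_N(b_r-a_r,y_r-x_r)$ appearing in \eqref{eq-Asstar}. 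All remaining $x_r^p,y_r^p$ in \eqref{eq-Astar} with $p\notin\{i_r,j_r\}$ are then summed out.

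To perform this summation, fix the diagram $\mathbf I$ and a particle label $p\in\llbracket 1,q\rrbracket$. Let $r_1<\dots<r_\ell$ be the active times of $p$, namely the indices $r$ with $p\in\{i_r,j_r\}$. At active times the position of $p$ is pinned to $x_{r_s}$ at time $a_{r_s}$ and to $y_{r_s}$ at time $b_{r_s}$; at all other times listed in \eqref{eq-Astar} (i.e.\ $a_r,b_r$ with $p\notin\{i_r,j_r\}$) the coordinate of $p$ appears only in free transition kernels. Telescoping these free kernels by Chapman--Kolmogorov collapses the segment from $b_{r_s}$ to $a_{r_{s+1}}$ into a single factor $p(a_{r_{s+1}}-b_{r_s},x_{r_{s+1}}-y_{r_s})$, the initial segment from $x_0^p$ to $x_{r_1}$ into $p_{a_{r_1}}(x_{r_1}-x_0^p)$, and the final segment after $b_{r_\ell}$ into $1$. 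If $\ell=0$ (particle never active), the whole contribution of $p$ is $1$.

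Now I reassemble the product over $p$. By the definition of $\bar k_{r+1}^1$ and $\bar k_{r+1}^2$ as the last indices before $r+1$ at which $i_{r+1}$, respectively $j_{r+1}$, were active, the telescoped transition for particle $i_{r+1}$ arriving at the active time $a_{r+1}$ is precisely $p(a_{r+1}-b_{\bar k^1_{r+1}},x_{r+1}-y_{\bar k^1_{r+1}})$, and symmetrically for $j_{r+1}$. Ranging $r$ over $1,\dots,m-1$ enumerates each intermediate transition for each active particle exactly once, reproducing the last line of \eqref{eq-Asstar}. The base case $r=1$ has $\bar k_1^1=\bar k_1^2=0$, so both $i_1$ and $j_1$ travel from their initial positions $x_0^{i_1},x_0^{j_1}$ to the common point $x_1$, producing the prefactor $\prod_{p\in\{i_1,j_1\}}p_{a_1}(x_1-x_0^p)$.

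The only delicate point, which I regard as the main (purely notational) obstacle, is the convention to use when $\bar k^?_{r+1}=0$ for some $r\ge 1$, i.e.\ when a particle becomes active for the first time at step $r+1\ge 2$. In this case the telescoping yields $p_{a_{r+1}}(x_{r+1}-x_0^{\text{particle}})$, which matches the factor in \eqref{eq-Asstar} under the convention $b_0:=0$ and $y_0:=x_0^{\text{particle}}$, the particle label being read from context (either $i_{r+1}$ or $j_{r+1}$). Once this convention is made, every transition produced by the Chapman--Kolmogorov telescoping is matched by exactly one factor in \eqref{eq-Asstar}, and the identity $\Psi_{N,q}(X)=\sum\tilde A_{X,\mathbf a,\mathbf b,\mathbf x,\mathbf y,\mathbf I}$ follows.
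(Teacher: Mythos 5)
Your proof is correct and it is essentially the same argument the paper uses; the paper merely states it tersely ("the semi-group property ... and summing, at intersection times, over the location of particles not involved in the intersection") while you carry out the Chapman--Kolmogorov telescoping and the trailing ``sum to $1$'' collapse explicitly. The notational nuance you flag when ${\bar k}^{\,\cdot}_{r+1}=0$ is a genuine small abuse in the statement, resolved exactly as you propose by reading $b_0=0$ and $y_0=x_0^p$ for the relevant particle $p\in\{i_{r+1},j_{r+1}\}$.
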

\begin{proof}
  Denote $x_r = x_r^{i_r}$ and $y_r=y_r^{i_r}$. {We obtain \eqref{eq-Asstar} from \eqref{eq-Astar} by using the semi group property of the random walk transition probabilities and summing, at intersection times,
  over the location of particles not involved in the intersection.}
\end{proof}
\begin{figure}[ht]
  \centering
  \begin{minipage}[b]{1\linewidth}
\begin{tikzpicture}
  \begin{scope}
\coordinate (A) at (-5.6,1);
\draw (A) node[left] {$i_{m-1}$};
\coordinate (B) at (-5.6,0);
\draw (B) node[left] {$j_{m-1}$};
\coordinate (D) at (-5.6,-2);
\draw (D) node[left] {$j_{m}$};
\coordinate (E) at (-4.6,-2.5);
\draw (E) node[below] {$a_{m-1}$};
\draw[.] (-4.6,-2.5)--(-4.6,0.5);
\coordinate (F) at (-2.6,-2.5);
\draw (F) node[below] {$b_{m-1}$};
\draw[.] (-2.6,-2.5)--(-2.6,0.5);
\coordinate (EE) at (-1.6,-2.5);
\draw (EE) node[below] {$a_{m}$};
\draw[.] (-1.6,-2.5)--(-1.6,-1.5);
\coordinate (FF) at (-0,-2.5);
\draw (FF) node[below] {$b_{m}$};
\draw[.] (0,-2.5)--(0,-1.5);

\draw[thick] (-2.6,0.5) .. controls (-1,1.5) .. (2,1);
\draw[thick] (-2.6,0.5) .. controls (-2,-0.5) .. (-1.6,-1.5);
\draw[thick] (-3,0.5) .. controls (-2.8,0.8) .. (-2.6,0.5);
\draw[thick] (-3,0.5) .. controls (-2.8,0.2) .. (-2.6,0.5);
\draw[thick] (-3.4,0.5) .. controls (-3.2,0.8) .. (-3,0.5);
\draw[thick] (-3.4,0.5) .. controls (-3.2,0.2) .. (-3,0.5);
\draw[thick] (-3.8,0.5) .. controls (-3.6,0.8) .. (-3.4,0.5);
\draw[thick] (-3.8,0.5) .. controls (-3.6,0.2) .. (-3.4,0.5);
\draw[thick] (-4.2,0.5) .. controls (-4,0.8) .. (-3.8,0.5);
\draw[thick] (-4.2,0.5) .. controls (-4,0.2) .. (-3.8,0.5);
\draw[thick] (-4.6,0.5) .. controls (-4.4,0.8) .. (-4.2,0.5);
\draw[thick] (-4.6,0.5) .. controls (-4.4,0.2) .. (-4.2,0.5);
\draw[thick] (-5.6,1) .. controls (-5,1.5) .. (-4.6,0.5);
\draw[thick] (-5.6,0) .. controls (-5,-0.5) .. (-4.6,0.5);

\draw[thick] (-0,-1.5) .. controls (1,-2) .. (2,-2);
\draw[thick] (-0,-1.5) .. controls (1,-0.5) .. (2,-0.5);
\draw[thick] (-1.6,-1.5) .. controls (-1.4,-1.2) .. (-1.2,-1.5);
\draw[thick] (-1.6,-1.5) .. controls (-1.4,-1.8) .. (-1.2,-1.5);
\draw[thick] (-1.2,-1.5) .. controls (-1,-1.2) .. (-0.8,-1.5);
\draw[thick] (-1.2,-1.5) .. controls (-1,-1.8) .. (-0.8,-1.5);
\draw[thick] (-0.8,-1.5) .. controls (-0.6,-1.2) .. (-0.4,-1.5);
\draw[thick] (-0.8,-1.5) .. controls (-0.6,-1.8) .. (-0.4,-1.5);
\draw[thick] (-0.4,-1.5) .. controls (-0.2,-1.2) .. (-0,-1.5);
\draw[thick] (-0.4,-1.5) .. controls (-0.2,-1.8) .. (-0,-1.5);
\draw[thick] (-5.6,-2) .. controls (-3,-2.5) .. (-1.6,-1.5);

  \end{scope}

\end{tikzpicture}
\end{minipage}
  \begin{minipage}[b]{1\linewidth}
\begin{tikzpicture}
  \begin{scope}
\coordinate (AA) at (-7.6,1);
\draw (AA) node[above] { $\stackrel{  \;\mbox{\rm exchanges}}{\cdots}$};
\coordinate (AAA) at (-7.5,-2.6);
\draw (AAA) node[above] {$\stackrel{  \;\mbox{\rm exchanges}}{\cdots}$};
\coordinate (A) at (-5.6,1);
\draw (A) node[left] {$i_{m-1}$};
\coordinate (B) at (-5.6,0);
\draw (B) node[left] {$j_{m-1}$};
\coordinate (C) at (-5.6,-1);
\draw (C) node[left] {$i_{m}$};
\coordinate (D) at (-5.6,-2);
\draw (D) node[left] {$j_{m}$};
\coordinate (E) at (-4.6,-2.5);
\draw (E) node[below] {$a_{m-1}$};
\draw[.] (-4.6,-2.5)--(-4.6,0.5);
\coordinate (F) at (-2.6,-2.5);
\draw (F) node[below] {$b_{m-1}$};
\draw[.] (-2.6,-2.5)--(-2.6,0.5);
\coordinate (EE) at (-1.6,-2.5);
\draw (EE) node[below] {$a_{m}$};
\draw[.] (-1.6,-2.5)--(-1.6,-1.5);
\coordinate (FF) at (-0,-2.5);
\draw (FF) node[below] {$b_{m}$};
\draw[.] (0,-2.5)--(0,-1.5);

\coordinate (EEE) at (-8.6,-2.5);
\draw (EEE) node[below] {$a_{\bar k_m}$};
\draw[.] (-8.6,-2.5)--(-8.6,0.5);

\coordinate (FFF) at (-9,0.5);
\draw (FFF) node[left]{$\cdots$};
\coordinate (FFFF) at (-10,-2);
\draw (FFFF) node[left]{$\cdots$};

\draw[thick] (-9,0.5) .. controls (-8.8,0.8) .. (-8.6,0.5);
\draw[thick] (-9,0.5) .. controls (-8.8,0.2) .. (-8.6,0.5);
\draw[thick] (-8.6,0.5) .. controls (-7,-1) .. (-5.6,-1);
\draw[thick] (-8.6,0.5) .. controls (-8.2,1.1) .. (-7.8,1);

\draw[thick] (-10,-2) .. controls (-9.8,-1.7) .. (-9.6,-2);
\draw[thick] (-10,-2) .. controls (-9.8,-2.3) .. (-9.6,-2);
\draw[thick] (-9.6,-2) .. controls (-7,-1.7) .. (-5.6,-2);
\draw[thick] (-9.6,-2) .. controls (-9,-2.3) .. (-8,-2.5);

\draw[thick] (-2.6,0.5) .. controls (-1,1.5) .. (2,1);
\draw[thick] (-2.6,0.5) .. controls (-1,-0.5) .. (2,0);
\draw[thick] (-3,0.5) .. controls (-2.8,0.8) .. (-2.6,0.5);
\draw[thick] (-3,0.5) .. controls (-2.8,0.2) .. (-2.6,0.5);
\draw[thick] (-3.4,0.5) .. controls (-3.2,0.8) .. (-3,0.5);
\draw[thick] (-3.4,0.5) .. controls (-3.2,0.2) .. (-3,0.5);
\draw[thick] (-3.8,0.5) .. controls (-3.6,0.8) .. (-3.4,0.5);
\draw[thick] (-3.8,0.5) .. controls (-3.6,0.2) .. (-3.4,0.5);
\draw[thick] (-4.2,0.5) .. controls (-4,0.8) .. (-3.8,0.5);
\draw[thick] (-4.2,0.5) .. controls (-4,0.2) .. (-3.8,0.5);
\draw[thick] (-4.6,0.5) .. controls (-4.4,0.8) .. (-4.2,0.5);
\draw[thick] (-4.6,0.5) .. controls (-4.4,0.2) .. (-4.2,0.5);
\draw[thick] (-5.6,1) .. controls (-5,1.5) .. (-4.6,0.5);
\draw[thick] (-5.6,0) .. controls (-5,-0.5) .. (-4.6,0.5);

\draw[thick] (-0,-1.5) .. controls (1,-2) .. (2,-2);
\draw[thick] (-0,-1.5) .. controls (1,-0.5) .. (2,-0.5);
\draw[thick] (-1.6,-1.5) .. controls (-1.4,-1.2) .. (-1.2,-1.5);
\draw[thick] (-1.6,-1.5) .. controls (-1.4,-1.8) .. (-1.2,-1.5);
\draw[thick] (-1.2,-1.5) .. controls (-1,-1.2) .. (-0.8,-1.5);
\draw[thick] (-1.2,-1.5) .. controls (-1,-1.8) .. (-0.8,-1.5);
\draw[thick] (-0.8,-1.5) .. controls (-0.6,-1.2) .. (-0.4,-1.5);
\draw[thick] (-0.8,-1.5) .. controls (-0.6,-1.8) .. (-0.4,-1.5);
\draw[thick] (-0.4,-1.5) .. controls (-0.2,-1.2) .. (-0,-1.5);
\draw[thick] (-0.4,-1.5) .. controls (-0.2,-1.8) .. (-0,-1.5);
\draw[thick] (-5.6,-1) .. controls (-3,-0.5) .. (-1.6,-1.5);
\draw[thick] (-5.6,-2) .. controls (-3,-2.5) .. (-1.6,-1.5);

  \end{scope}

\end{tikzpicture}
\end{minipage}
\caption{Two types of diagrams. Note the different types of exchanges. In
the top diagram, $\bar k_m=m-1$ and the $m$th
jump is considered
short (the notion of short and long jumps
is defined in Section \ref{subsec-induction}). In the bottom, the $m$th jump is considered long (with respect to a given $L$) if
$m-\bar k_m>L+2$.
In that case,
both paths $i_m,j_m$ will be involved in an intersection not before
$a_{m-L-2}$.}
\label{fig:caseslong}
\end{figure}
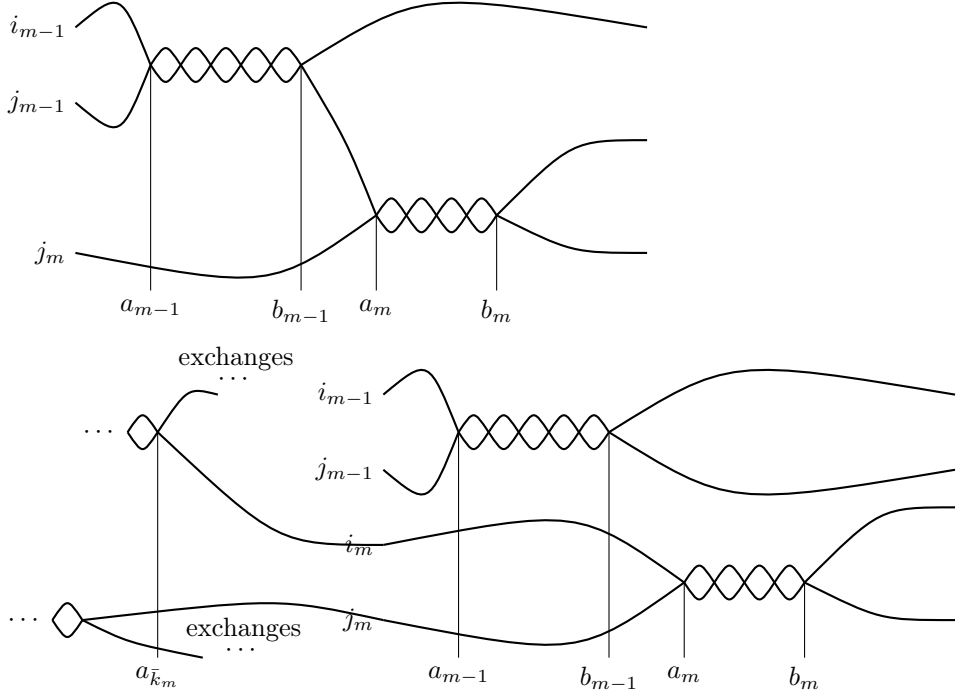

\begin{proposition}
  \label{prop-3.3}
  %For all $T\in \llbracket 1,N\rrbracket$,
  We have that
\begin{equation} \label{eq:boundMomentAmn}
\sup_{X\in (\mathbb Z^2)^q} \Psi_{N,q}(X) \leq  \sum_{m=0}^\infty \sum_{\mathbf I\in \mathcal D(m,q)} \sigma_N^{2m} A_{m,N,\mathbf I},
\end{equation}
where
\begin{equation} \label{eq:BoundAmN}
\begin{aligned}
&A_{m,N,\mathbf I} =
  \sum_{\substack{u_i \in \llbracket 1,T \rrbracket,v_i\in \llbracket 0,T\rrbracket , 1\leq i \leq m \\ \sum_{i=1}^m {u_i} \leq T}}    p_{2u_1}^\star U_N(v_m)  \prod_{r=1}^{m-1}  U_N(v_{r})
p^\star_{v_r + 2u_{r+1}+ 2\tilde u_{r+1}}.
\end{aligned}
\end{equation}
with
\begin{equation}
  \label{eq-tildeudef}
  \tilde u_{r} = \begin{cases}   \sum_{i={\bar k}_{r}+1}^{r-1} u_{i}
    & \text{if}\quad {\bar k}_r<r-1,\\
  \frac{u_{r-1}}{2} & \text{if}\quad  {\bar k}_r=r-1,
\end{cases}
\end{equation}
and $p^\star_k = \sup_{x\in \mathbb Z^2} p_k(x)$.
\end{proposition}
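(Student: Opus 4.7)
The plan is to derive the bound by summing out all the spatial variables $(\mathbf x, \mathbf y) = (x_r, y_r)_{r=1}^m$ in the representation \eqref{eq:decompambm}--\eqref{eq-Asstar}, then reparametrizing times via the gaps $u_r := a_r - b_{r-1}$ (with the convention $b_0 := 0$) and the durations $v_r := b_r - a_r$. The spatial summations are carried out in reverse order, handling each pair $(y_r, x_r)$ for $r = m, m-1, \ldots, 1$ in the order $y_r$ first, then $x_r$.

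The reverse order is crucial. Because we process later indices first, by the time we come to sum $y_r$ every later transition of the form $p(\cdot, x_{r'} - y_r)$ (for $r' > r$ with $\bar k^l_{r'} = r$) has already been absorbed into a $y$-independent $p^\star$ factor. Consequently, $y_r$ appears only in $U_N(v_r, y_r - x_r)$ at this stage, so $\sum_{y_r} U_N(v_r, y_r - x_r) = U_N(v_r)$ by the very definition of $U_N$, and this is also independent of $x_r$. Then $x_r$ appears only in its two incoming transitions $p_{n^l_r}(x_r - y_{\bar k^l_r})$ with $n^l_r := a_r - b_{\bar k^l_r}$ (with $y_0 := x_0^{i_r}$ or $x_0^{j_r}$ and $b_0 := 0$ when $\bar k^l_r = 0$), and Chapman--Kolmogorov yields
\[
\sum_{x_r} p_{n^1_r}(x_r - y_{\bar k^1_r}) \, p_{n^2_r}(x_r - y_{\bar k^2_r}) \;=\; p_{n^1_r + n^2_r}(y_{\bar k^1_r} - y_{\bar k^2_r}) \;\le\; p^\star_{n^1_r + n^2_r},
\]
which is again $y$-free, enabling the next step of the iteration. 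For $r = 1$, the analogous computation with $p_{a_1}(x_1 - x_0^{i_1}) \, p_{a_1}(x_1 - x_0^{j_1})$ produces $p^\star_{2a_1} = p^\star_{2u_1}$ and absorbs the remaining $X$-dependence.

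It then remains to upgrade each $p^\star_{n^1_r + n^2_r}$ (for $r \ge 2$) to $p^\star_{v_{r-1} + 2u_r + 2\tilde u_r}$. Setting $\bar k_r := \bar k^1_r \vee \bar k^2_r$, one has
\[
n^1_r + n^2_r \;\ge\; 2(a_r - b_{\bar k_r}) \;=\; 2 u_r + 2\!\!\!\sum_{i=\bar k_r + 1}^{r-1}\!\!\! u_i \;+\; 2\!\!\!\sum_{i=\bar k_r + 1}^{r-1}\!\!\! v_i.
\]
When $\bar k_r \le r - 2$, the $v$-sum contains $v_{r-1}$ and the right side exceeds $v_{r-1} + 2u_r + 2\tilde u_r$ at once. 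When $\bar k_r = r - 1$, the diagram condition $\{i_r, j_r\} \ne \{i_{r-1}, j_{r-1}\}$ rules out $\bar k^1_r = \bar k^2_r = r - 1$, so one of $\bar k^l_r$ is $\le r - 2$, giving $n^1_r + n^2_r \ge 2u_r + v_{r-1} + u_{r-1}$, which matches the bound with the convention $\tilde u_r = u_{r-1}/2$. Monotonicity of $p^\star$ concludes this step.

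After reindexing $r \leftrightarrow r+1$ in the $p^\star$ factors and collecting everything, the resulting upper bound is precisely the summand inside $A_{m, N, \mathbf I}$ in \eqref{eq:BoundAmN}, summed under the original time constraint $\sum_i (u_i + v_i) \le T$ inherited from $b_m \le T$; relaxing this to the weaker $\sum_i u_i \le T$ with $v_i \in \llbracket 0, T \rrbracket$ only enlarges the sum. Since all $X$-dependence was absorbed at the $r = 1$ step into $p^\star_{2u_1}$, the supremum over $X$ is immediate. The main subtlety lies in the bookkeeping of which $y$'s appear in which transitions; given the reverse processing order, the propagation is clean and only the trivial bound $p_n(\cdot) \le p^\star_n$ together with monotonicity of $p^\star$ is used.
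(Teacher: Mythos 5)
Your argument is correct and follows essentially the same route as the paper's proof: sum out the spatial variables $(\mathbf x,\mathbf y)$ in reverse order using $\sum_{y_r} U_N(v_r,y_r-x_r)=U_N(v_r)$ and the Chapman--Kolmogorov bound $\sum_{x_r}p(n^1_r,\cdot)p(n^2_r,\cdot)\le p^\star_{n^1_r+n^2_r}$, then reparametrize via $u_i=a_i-b_{i-1}$, $v_i=b_i-a_i$ and invoke monotonicity of $p^\star$ together with $2a_{r+1}-b_{\bar k^1_{r+1}}-b_{\bar k^2_{r+1}}\ge v_r+2u_{r+1}+2\tilde u_{r+1}$. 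The only difference is that you spell out the case analysis behind this last inequality (in particular why the diagram condition $\{i_r,j_r\}\ne\{i_{r-1},j_{r-1}\}$ rules out $\bar k^1_r=\bar k^2_r=r-1$, which is what makes the $\tilde u_r=u_{r-1}/2$ convention work), a detail the paper states without elaboration.
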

\begin{proof}
By \eqref{eq:decompambm}, it is enough to show that
\begin{equation} \label{eq:propAmn}
\sup_{X\in (\mathbb Z^2)^q}  \sum_{\substack{1\leq a_1\leq b_1 < a_2 \leq b_2 <
  \dots < a_m \leq b_m\leq T\\
\mathbf x, \mathbf y \in \mathbb Z^{m}}} \tilde A_{X,\mathbf a, \mathbf b, \mathbf x,\mathbf y,\mathbf I} \leq A_{m,N,\mathbf{I}}.
\end{equation}
We begin by summing on $y_m$, which gives a contribution of
\[\sum_{y_m} U_N(b_m-a_m,y_m-x_m) = U_N(b_m-a_m),\]
where $U_N(n)$ is defined in \eqref{eq:def_UN}. Then summing on $x_m$ gives a factor
\begin{align*}
  &\sum_{x_m} p(a_m-b_{{\bar k^1_m}},x_m-y_{{\bar k^1_m}})p(a_m-
  b_{{\bar k^2_m}},x_m-y_{{\bar k^2_m}})\\
  & = p(2a_m-b_{{\bar k^1_m}}-b_{{\bar k^2_m}},y_{{\bar k^1_m}}
  -y_{{\bar k^2_m}}) \leq p_{2a_m-b_{{\bar k^1_m}}-b_{{\bar k^2_m}}}^\star.
\end{align*}
By iterating this process we obtain that the sum on $\mathbf x,\mathbf y$ is bounded (uniformly on the starting point $X$) by
\[p_{2a_1}^\star U_N(b_m-a_m)  \prod_{r=1}^{m-1}  U_N(b_r-a_r)
p^\star_{2a_{r+1}-b_{{\bar k^1_{r+1}}}-b_{{\bar k^2_{r+1}}}}.
\]
If we introduce the change of variables $u_i=a_i-b_{i-1}$ and $v_i=b_i-a_i$ with $b_0=0$, then equation \eqref{eq:propAmn} follows from combining that
$2 a_{r+1}-b_{{\bar k_{r+1}^1}}-b_{{\bar k_{r+1}^2}} \geq v_r + 2u_{r+1}+ 2 \tilde{u}_{r+1}$
with the monotonicity of $p_n^\star$ in $n$,
which follows from
\begin{equation}
  \label{eq-monotone}
  p_{n+1}^\star=\sup_y \sum_x p_n(x) p_1(y-x)\leq
p_n^\star.
\end{equation}
\end{proof}
\subsection{Estimates on $U_N$}
It is clear from Proposition \ref{prop-3.3} that the function $U_N$ plays
a crucial rote in our moment estimates, which we will
obtain by an induction in the next subsection. In the current
subsection, we digress and obtain
a-priori estimates on $U_N$ (and $\IE[W_N(\beta_N)^2]$).
Appendix \ref{app-UN} contains some improvements that
are not needed in the current work but may prove useful in follow up work.
\begin{proposition} There exists $N_0=N_0(\hat \beta)$ such that for all $N\geq N_0$ 
and all $n\leq N$,
\begin{equation} \label{eq:bound2ndMoment}
\IE\left[W_n(\beta_N)^2\right]  \leq \frac{1}{1-\sigma_N^2 R_n}.
\end{equation}
Furthermore, there exists  $\varepsilon_{n}=\varepsilon(n,\hat \beta)\to 0$ as $n\to\infty$, such that
for all  $N\geq n$,
\begin{equation} \label{eq:asympt2ndMoment}
\IE\left[W_n(\beta_N)^2\right] = (1+\varepsilon_n) \frac{1}{1-\hat \beta^2 \frac{\log n}{\log N}}.
\end{equation}
\end{proposition}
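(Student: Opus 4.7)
The proof splits into the upper bound \eqref{eq:bound2ndMoment} and the matching asymptotic \eqref{eq:asympt2ndMoment}. In both cases I would work directly with the chaos expansion of $\IE[W_n(\b_N)^2]$, which is a special case of \eqref{eq:defPsiN} with $q=2$ and $T=n$ (the no-triple event $G_T$ is automatic for two particles). Writing $P_m:=\sum_{x\in\mathbb Z^2} p_m(x)^2=p_{2m}(0)$ and $n_0=0$, this reads
\[
\IE[W_n(\b_N)^2]\;=\;\sum_{k=0}^\infty \sigma_N^{2k}\,Q_k(n),\qquad Q_k(n):=\sum_{1\leq n_1<\dots<n_k\leq n}\prod_{r=1}^k P_{n_r-n_{r-1}}.
\]

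For \eqref{eq:bound2ndMoment} I would reparametrize by the gaps $m_r=n_r-n_{r-1}\in\{1,\dots,n\}$ and simply drop the joint constraint $m_1+\dots+m_k\leq n$. Since $\sum_{m=1}^n P_m=R_n$ by definition, this yields $Q_k(n)\leq R_n^k$, and summing the geometric series gives $\IE[W_n(\b_N)^2]\leq 1/(1-\sigma_N^2 R_n)$. The convergence of the series is guaranteed by $\sigma_N^2 R_n\leq \sigma_N^2 R_N\to\hat\beta^2<1$, which follows from \eqref{eq:LLTsigmaN} and \eqref{eq-RNas}; choosing $N_0(\hat\beta)$ large enough ensures $\sigma_N^2 R_n<1$ uniformly in $n\leq N$.

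For \eqref{eq:asympt2ndMoment} I would reverse the bound $Q_k(n)\leq R_n^k$ up to a small relative error. Writing $R_n^k=\sum_{m_1,\dots,m_k\in\{1,\dots,n\}}\prod P_{m_i}$, any tuple missing from $Q_k(n)$ must satisfy $\max_i m_i>n/k$, so a union bound gives
\[
R_n^k-Q_k(n)\;\leq\;k\,\bigl(R_n-R_{\lfloor n/k\rfloor}\bigr)\,R_n^{k-1}\;=\;O(k\log k)\cdot R_n^{k-1},
\]
using $R_n-R_{\lfloor n/k\rfloor}=(\log k)/\pi+o(1)$ from \eqref{eq-RNas}. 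Hence $Q_k(n)=R_n^k\bigl(1-O(k\log k/R_n)\bigr)$.

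To combine these estimates I would split the chaos series at $K:=\lfloor R_n^{1/2}\rfloor$. For $k\leq K$ the relative error $O(K\log K/R_n)=o(1)$ is uniform in $k$; for $k>K$ the crude bound $Q_k(n)\leq R_n^k$ together with $\sigma_N^2 R_n\leq \hat\beta^2<1$ gives a tail of order $\hat\beta^{2K}$, which is negligible. This yields $\IE[W_n(\b_N)^2]=(1+o(1))/(1-\sigma_N^2 R_n)$, and substituting $\sigma_N^2 R_n=\hat\beta^2(\log n/\log N)(1+o(1))$ uniformly in $N\geq n$ (again from \eqref{eq:LLTsigmaN} and \eqref{eq-RNas}) finishes the proof. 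The only point requiring some care is the uniformity in $N\geq n$: since the effective parameter $\sigma_N^2 R_n$ lies in $[0,\hat\beta^2]$ and is therefore bounded away from $1$, both the geometric tail and the prefactor $1/(1-\sigma_N^2 R_n)$ are controlled uniformly, and only the error from \eqref{eq-RNas} propagates into $\varepsilon_n$.
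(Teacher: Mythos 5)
Your proof is correct. The upper bound \eqref{eq:bound2ndMoment} is obtained exactly as in the paper: expand in chaos, drop the constraint $\sum m_i\leq n$ on the gaps, and sum the geometric series. For the asymptotic \eqref{eq:asympt2ndMoment}, you take a slightly different but equally valid route for the matching lower bound. The paper restricts each gap to $\llbracket 1, \lfloor n/k\rfloor\rrbracket$ (so the constraint $\sum m_i\leq n$ holds automatically), giving $Q_k(n)\geq R_{\lfloor n/k\rfloor}^k$, then truncates the series at $k\leq \log n$ so that all the $R_{\lfloor n/k\rfloor}$ appearing are $\geq R_{\lfloor n/\log n\rfloor}\sim R_n$. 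You instead bound the complement $R_n^k-Q_k(n)$ by a union bound over which gap exceeds $n/k$ and truncate at $k\leq R_n^{1/2}$. Both schemes amount to the same observation that the constrained multinomial sum $Q_k(n)$ is $(1-o(1))R_n^k$ when $k$ is small compared to $\sqrt{\log n}$, and both then use that the geometric tail in $k$ is negligible because $\sigma_N^2 R_n$ stays bounded away from $1$ (for $N\geq N_0(\hat\beta)$). One small caution in your write-up: $\sigma_N^2R_n\leq\hat\beta^2$ is not literally true since $\sigma_N^2=\hat\beta^2(1+\delta_N)/R_N$, so one should write $\sigma_N^2R_n\leq\hat\beta^2(1+\delta_N)$ and invoke $N\geq N_0(\hat\beta)$ to get a uniform bound strictly below $1$; this is harmless and the paper handles it the same way. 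Likewise the $O(k\log k)$ should read $O(k(1+\log k))$ to cover $k=1$, again cosmetic.
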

\begin{proof} We first choose $N_0=N_0(\hat \beta)$ large enough such that for all $N\geq (n\vee N_0)$, we have $\sigma_N^2 R_n<1$.
  That this is possible follows from 
  %\eqref{eq:LLTsigmaN} and 
  \eqref{eq:pnstar} which yields that
 \begin{equation} \label{eq:upper_boundRn}
\forall n\in \mathbb N,\quad R_n = \sum_{s=1}^n p_{2s}(0) \leq \frac{1}{\pi}\sum_{s=1}^n \frac{1}{s} \leq \frac{1}{\pi}\log (n+1).
\end{equation}
For the rest of the proof, we continue in this setup.
%assume that $N\geq (n\vee N_0)$, so that in particular $\sigma_N^2 R_n < 1$.
Similarly to \eqref{eq:chaosSimple}, we have (letting $n_0=x_0=0$) that
\begin{align}
\IE\left[W_n(\beta_N)^2\right] & = \DE_0\left[e^{\beta_N^2 \sum_{k=1}^n \mathbf{1}_{S_k^1 = S_k^2}} \right]\nonumber\\
& = \sum_{k=0}^\infty \sigma_N^{2k} \sum_{0< n_1 <\dots < n_k \leq n} \sum_{x_1,\dots,x_{k}\in \mathbb Z^2} \prod_{i=1}^{k}  p_{n_{i}-n_{i-1}}(x_i-x_{i-1})^2. \label{eq:whereRunFree}
\end{align}
Hence, if for each $k\in \mathbb N$ in  \eqref{eq:whereRunFree} we let $n_{i}-n_{i-1}$ run free in $\llbracket 1,n\rrbracket$, we obtain that
\begin{align*}
\IE\left[W_n(\beta_N)^2\right] & \leq \sum_{k=0}^\infty \sigma_N^{2k}  \left( \sum_{m=1}^n \sum_{x\in \mathbb Z^2} p_{m}(x)^2\right)^k
= \sum_{k=0}^\infty  \sigma_N^{2k} R_n^k = \frac{1}{1- \sigma_N^2 R_n},
\end{align*}
which gives \eqref{eq:bound2ndMoment}.
On the other hand, if for each $k\in \mathbb N$ in  \eqref{eq:whereRunFree} we let $n_{i}-n_{i-1}$ run free in $\llbracket 1, n/k \rrbracket$, we have
\begin{align*}
\IE\left[W_n(\beta_N)^2\right] & \geq 1+ \sum_{k=1}^\infty \sigma_N^{2k}  \left( \sum_{m=1}^{n/k} \sum_{x\in \mathbb Z^2} p_{m}(x)^2\right)^k\\
& \geq 1+ \sum_{k=1}^{\log n}  \sigma_N^{2k} R_{n/\log n}^k = \frac{1-(\sigma_N^2 R_{n/\log n})^{\log n+1}}{1- \sigma_N^2 R_{n/\log n}},
\end{align*}
By \eqref{eq:LLTsigmaN} and the fact that $R_n\sim \frac{1}{\pi} \log n$
as $n \to \infty$
by \eqref{eq-RNas},
we find that for all $N\geq n$,
\[\IE\left[W_n(\beta_N)^2\right]  \geq (1+\delta_n) \frac{1}{1-\hat \beta^2 \frac{\log n}{\log N}},
\]
with $\delta_n = \delta_n(\hat \beta)\to 0$ as $n\to\infty$.
Combining this with \eqref{eq:bound2ndMoment} entails \eqref{eq:asympt2ndMoment}.
\end{proof}

\begin{proposition} \label{prop:factor2ndMoment}
For all $M\geq 1$, we have:
\begin{equation} \label{eq:firstequality}
\sum_{n= 0}^M U_N(n) =\IE\left[W_{M}^2\right].
\end{equation}
Moreover, there is $C(\hat \beta)>0$ such that, as $N\to\infty$ and for all $n\leq N$,
\begin{equation} \label{eq:boundP2P2ndMomentC}
 U_N(n) \leq C \frac{1}{\left(1-\hat \beta^2 \frac{\log n}{\log N}\right)^2} \frac{1}{n\log N}.
\end{equation}
\end{proposition}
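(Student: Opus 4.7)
The plan is to treat both parts via the chaos expansion that underlies Proposition \ref{prop:decInChaos}.

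For \eqref{eq:firstequality}, I would expand both sides in powers of $\sigma_N^2$ using the identity $e^{\beta_N^2 \mathbf{1}_A} = 1 + \sigma_N^2 \mathbf{1}_A$. On the one hand,
\[
\IE[W_M^2] = \sum_{k\geq 0} \sigma_N^{2k} \sum_{1\leq n_1<\dots<n_k\leq M} \DP_0^{\otimes 2}\bigl(S_{n_i}^1=S_{n_i}^2,\ 1\leq i\leq k\bigr),
\]
while \eqref{eq:chaosSimple} summed over $x$ yields, for $n\geq 1$,
\[
U_N(n) = \sigma_N^2 \sum_{k\geq 0} \sigma_N^{2k} \sum_{1\leq n_1<\dots<n_k<n} \DP_0^{\otimes 2}\bigl(S_{n_i}^1=S_{n_i}^2,\ S_n^1=S_n^2\bigr).
\]
Summing $U_N(n)$ over $n\in\llbracket 0,M\rrbracket$ and indexing each term of the chaos expansion of $\IE[W_M^2]$ by its largest intersection time $n_k$ (the empty collection $k=0$ corresponding to $U_N(0)=1$) recovers precisely $\sum_{n=0}^M U_N(n)$.

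For \eqref{eq:boundP2P2ndMomentC}, I would sum \eqref{eq:chaos2} over $x\in\mathbb{Z}^2$; iterated application of the identity $\sum_y p_m(y)^2 = p_{2m}(0)$ from the outside inwards gives
\[
U_N(n) = \sigma_N^2 \sum_{k\geq 0} \sigma_N^{2k} \sum_{\substack{m_1,\dots,m_{k+1}\geq 1\\ m_1+\dots+m_{k+1}=n}} \prod_{r=1}^{k+1} p_{2m_r}(0).
\]
Bounding $p_{2m}(0)\leq \tfrac{1}{\pi m}$ via \eqref{eq:pnstar} reduces the problem to controlling
\[
T_k(n) := \sum_{m_1+\dots+m_{k+1}=n,\, m_i\geq 1} \prod_{i=1}^{k+1} \frac{1}{m_i} = [z^n](-\log(1-z))^{k+1}.
\]

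The key combinatorial estimate I would establish is $T_k(n) \leq (k+1)(\log n + C')^k/n$ for a universal constant $C'$. To obtain it, I would apply the general identity $[z^n]f^{k+1} = \tfrac{k+1}{n}[z^{n-1}]f'f^k$ to $f(z)=-\log(1-z)$, which yields the recursion
\[
T_k(n) = \frac{k+1}{n}\sum_{j=0}^{n-1} T_{k-1}(j),
\]
and then induct on $k$ using the integral-type comparison $\sum_{j=1}^{n-1}(\log j+C')^k/j \leq (\log n+C')^{k+1}/(k+1) + O((\log n+C')^k)$; the factor $1/(k+1)$ on the right-hand side is what propagates the linear-in-$k$ dependence through the induction. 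Finally, summing over $k$ via $\sum_{k\geq 0}(k+1)x^k = (1-x)^{-2}$ yields
\[
U_N(n) \leq \frac{\sigma_N^2}{\pi n}\cdot \frac{1}{\bigl(1-\sigma_N^2(\log n+C')/\pi\bigr)^2},
\]
and \eqref{eq:LLTsigmaN} converts $\sigma_N^2\log n/\pi$ into $\hat\beta^2\log n/\log N + O(1/\log N)$, giving \eqref{eq:boundP2P2ndMomentC} since $1-\hat\beta^2\log n/\log N \geq 1-\hat\beta^2>0$ for $n\leq N$. The main technical obstacle is the combinatorial bound on $T_k(n)$ with the correct linear-in-$k$ coefficient, because any weaker bound of the form $c^k(\log n)^k/n$ would produce at best a $(1-\cdots)^{-1}$ factor rather than the $(1-\cdots)^{-2}$ required by the proposition.
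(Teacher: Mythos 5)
Your proof of \eqref{eq:firstequality} is essentially the paper's: expand $U_N$ via \eqref{eq:chaosUN}, sum over $n$, and recognize the chaos expansion of $\IE[W_M^2]$ indexed by its last intersection time.

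For \eqref{eq:boundP2P2ndMomentC}, your route is genuinely different and self-contained. The paper rewrites $U_N(n)=\sum_{k\geq 1}(\sigma_N^2R_N)^k\,\DP(\tau_k^{(N)}=n)$ via a renewal process with step law $\DP(T_1^{(N)}=n)=p_{2n}(0)/R_N$ and then invokes \cite[Proposition 1.5]{CaravennaFrancesco2019TDsr} (the Dickman-subordinator estimate $\DP(\tau_k^{(N)}=n)\le Ck\,\DP(T_1^{(N)}=n)\DP(T_1^{(N)}\le n)^{k-1}$), which already carries the linear-in-$k$ factor responsible for the $(1-\cdot)^{-2}$ singularity. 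You instead target a purely combinatorial bound on $T_k(n)=[z^n](-\log(1-z))^{k+1}$, extracting the same linear-in-$k$ factor from the derivative identity $[z^n]f^{k+1}=\tfrac{k+1}{n}[z^{n-1}]f'f^{k}$. That is a nice, citation-free alternative, correctly identifies where the $(k+1)$ must come from, and correctly warns that any bound of the form $c^k(\log n)^k/n$ would lose the extra power.

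However, the inductive step you sketch for $T_k(n)\le (k+1)(\log n+C')^{k}/n$ does not close. Feeding $T_{k-1}(j)\le k(\log j+C')^{k-1}/j$ into the recursion $T_k(n)=\tfrac{k+1}{n}\sum_{j<n}T_{k-1}(j)$ and then using $\sum_{j<n}(\log j+C')^{k-1}/j\le (\log n+C')^{k}/k+O((\log n+C')^{k-1})$ leaves a remainder $\tfrac{k(k+1)}{n}\cdot O((\log n+C')^{k-1})$; these remainders are multiplied by a fresh $\Theta(k^2)$ at each stage and accumulate factorially, so the hypothesis $T_k(n)\le (k+1)(\log n+C')^k/n$ is not preserved. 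The fix is simpler than what you attempted: after the single application of the derivative identity, $\sum_{j=0}^{n-1}T_{k-1}(j)$ is exactly the sum of $\prod_{i=1}^k m_i^{-1}$ over compositions $m_1+\dots+m_k\le n-1$ with $m_i\ge 1$; dropping the constraint on the total gives immediately
\begin{equation*}
\sum_{j=0}^{n-1}T_{k-1}(j)\;\le\;\Big(\sum_{m=1}^{n-1}\tfrac1m\Big)^{k}\;\le\;(1+\log n)^{k},
\end{equation*}
so $T_k(n)\le (k+1)(1+\log n)^{k}/n$ with no induction at all. (Equivalently, unroll the recursion completely to get $T_k(n)=\tfrac{(k+1)!}{n}\sum_{0<j_1<\dots<j_k<n}\prod_i j_i^{-1}\le \tfrac{k+1}{n}\big(\sum_{m<n}\tfrac1m\big)^{k}$.) With this repaired, your final steps — summing $\sum_{k}(k+1)x^k=(1-x)^{-2}$, substituting $\sigma_N^2$ via \eqref{eq:LLTsigmaN}, and noting that $1-\hat\beta^2\tfrac{1+\log n}{\log N}(1+\delta_N')$ is uniformly bounded below for $n\le N$ and $N$ large so that it may be replaced by $1-\hat\beta^2\tfrac{\log n}{\log N}$ at the cost of a constant $C(\hat\beta)$ — go through and give \eqref{eq:boundP2P2ndMomentC}.
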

\begin{remark} When $n\to\infty$, one can take the constant $C$ that appears in \eqref{eq:boundP2P2ndMomentC} arbitrarily close to one. See Appendix \ref{app-UN}.
\end{remark}
\begin{proof}
By \eqref{eq:chaos2}, we have, for $n\geq 1$, with $x_0=0$,
\begin{equation} \label{eq:chaosUN}
\begin{aligned}
& U_N(n)\\
& = \sigma_N^2  \sum_{k=1}^\infty \sigma_N^{2(k-1)}\sum_{0< n_1 <\dots < n_{k-1} < n_k := n} \sum_{x_1,\dots,x_{k}\in \mathbb Z^2} \prod_{i=1}^{k}  p_{n_{i}-n_{i-1}}(x_i-x_{i-1})^2\\
& = \sigma_N^2  \sum_{k=1}^\infty \sigma_N^{2(k-1)}\sum_{0< n_1 <\dots < n_{k-1} < n_k := n} \prod_{i=1}^{k}  p_{2n_{i}-2n_{i-1}}(0).
\end{aligned}
\end{equation}
Therefore,
\begin{align*}
\sum_{n=0}^M U_N(n)
& = 1+ \sum_{k=1}^\infty \sigma_N^{2k} \sum_{0< n_1 <\dots < n_{k-1} < n_k \leq  M} \sum_{x_1,\dots,x_{k}\in \mathbb Z^2} \prod_{i=1}^{k}  p_{n_{i}-n_{i-1}}(x_i-x_{i-1})^2 \\
& = \IE[W_{M}^2],
\end{align*}
which yields \eqref{eq:firstequality}.

We now prove \eqref{eq:boundP2P2ndMomentC} 
by expressing $U_N$ as a function of a renewal process.
%. This is a classical approach in the study of the pinning model (note that $U_N$ is the partition function of a 2D pinning model),
From \eqref{eq:chaosUN}, one can see that the following representation for $U_N(n)$ holds when $n\geq 1$:
\[U_N(n) =  \sum_{k=1}^\infty (\sigma_N^2 R_N)^k \DP\left(\tau_k^{(N)} = n\right),\]
where the $\tau_k^{(N)}$ are renewal times defined by
\[\tau_k^{(N)} = \sum_{i\leq k} T^{(N)}_i,\]
with $(T_i^{(N)})_i$ being i.i.d.\ random variables with distribution
\[\DP\left(T^{(N)}_i = n\right) = \frac{1}{R_N} p_{2n}(0) \mathbf{1}_{1\leq n\leq N},\quad \text{and} \quad R_N = \sum_{n=1}^N p_{2n}(0). \]
The renewal formulation that is used here is due to \cite{CaravennaFrancesco2019TDsr}. We also refer to [21, Chapter 1] for the connection of polymer models to renewals processes in a general context. By \cite[Proposition 1.5]{CaravennaFrancesco2019TDsr}, there exists $C>0$ such that for all $n\leq N$,
\begin{equation}\label{eq:dickman}
\DP\left(\tau_k^{(N)} = n\right) \leq C k \DP\left(T_1^{(N)} = n\right) \DP\left(T_1^{(N)} \leq n\right)^{k-1}.
\end{equation}
Hence, using that $\sum_{k=1}^\infty  ka^{k-1} = \frac{1}{(1-a)^2}$ for $a<1$,
\begin{align*}
U_N(n)& \leq C \sum_{k=1}^{\infty} (\sigma_N^2 R_N)^k k \DP\left(T_1^{(N)} = n\right) \DP\left(T_1^{(N)} \leq n\right)^{k-1}\\
& = C \frac{ p_{2n}(0) }{R_N} \frac{\sigma_N^2 R_N}{\left(1-\sigma_N^2 R_N \frac{R_n}{R_N} \right)^2},
\end{align*}
which gives \eqref{eq:boundP2P2ndMomentC} by \eqref{eq-RNas}, \eqref{eq:pnstar}
and
\eqref{eq:LLTsigmaN}.
\end{proof}

\subsection{Summing on the $v_i$'s}
In the following, we denote
\begin{equation}
  \label{eq-F}
F(u) = \frac{1}{u}  \frac{1}{1-\hat \beta^2 \frac{\log (u) }{\log N}}.
\end{equation}
By differentiation with respect to $u$ one checks that
$F$ is non-increasing.
\begin{proposition} \label{prop:boundAmNtildeBis} There exist 
 $N_0(\hat \beta)>0$ and $\varepsilon_N=\varepsilon(N,\hat{\beta})
\searrow 0$ as $N\to\infty$,  such that 
for all $N\geq N_0(\hat \beta)$,
\begin{equation} \label{eq:boundPsiAtilde}
\sup_{X\in (\mathbb Z^2)^q} \Psi_{N,q}(X) \leq \sum_{m=0}^\infty \sigma_N^{2m} \sum_{\mathbf I \in \mathcal D(m,q)} \left(\frac{1}{\pi}\right)^{m-1}  \tilde{A}_{m,N,\mathbf I},
\end{equation}
where, {recalling \eqref{eq-tildeudef},}
\begin{equation}\label{eq:boundAmntildeSansv}
\tilde A_{m,N,\mathbf I} = \frac{1}{1-\hat \beta^2} \sum_{ u_i \in \llbracket 1,T \rrbracket, 1\leq i \leq m} (1+\varepsilon_N)^{m} p^\star_{2u_1} \prod_{r=2}^{m}
F(u_r +\tilde u_{r}) \mathbf{1}_{\sum_{i=1}^{r} u_i \leq T}.
\end{equation}
\end{proposition}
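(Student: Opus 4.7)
The plan is to sum the variables $v_1,\dots,v_m$ out of the expression \eqref{eq:BoundAmN} for $A_{m,N,\mathbf I}$ one at a time, starting with $v_m$ and then processing $v_{m-1},\dots,v_1$ in this order. The variable $v_m$ appears only in $U_N(v_m)$, so by Proposition \ref{prop:factor2ndMoment} together with the a priori bound \eqref{eq:bound2ndMoment} and the identity $\sigma_N^2 R_T\leq \sigma_N^2 R_N\to\hat\b^2$ coming from \eqref{eq:LLTsigmaN},
\begin{equation*}
\sum_{v_m=0}^{T} U_N(v_m)\;=\;\IE[W_T(\b_N)^2]\;\leq\;\frac{1}{1-\sigma_N^2 R_T}\;\leq\;\frac{1+\varepsilon_N}{1-\hat\b^2}.
\end{equation*}
This provides the global factor $\frac{1}{1-\hat\b^2}$ in \eqref{eq:boundAmntildeSansv}.

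Next, for each $r\in\{1,\dots,m-1\}$ the variable $v_r$ appears only in the factor $U_N(v_r)\,p^\star_{v_r+2u_{r+1}+2\tilde u_{r+1}}$. Setting $s=u_{r+1}+\tilde u_{r+1}\geq 1$, the key lemma to establish is
\begin{equation*}
\sum_{v=0}^{T} U_N(v)\,p^\star_{v+2s}\;\leq\;\frac{1+\varepsilon_N}{\pi}\,F(s).
\end{equation*}
Granted this bound, iterating it across the $m-1$ factors and relabeling $r\mapsto r+1$ in the resulting product converts $\prod_{r=1}^{m-1}p^\star_{v_r+2u_{r+1}+2\tilde u_{r+1}}$ into $(1/\pi)^{m-1}\prod_{r=2}^{m}F(u_r+\tilde u_r)$, so that the $(1+\varepsilon_N)$ factors from each of the $m$ summations (including the one for $v_m$) aggregate into the announced $(1+\varepsilon_N)^m$.

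To prove the key lemma I split the sum at $v=2s$. For $v\leq 2s$, I use the monotonicity \eqref{eq-monotone} of $p^\star_n$ and \eqref{eq:pnstar} to write $p^\star_{v+2s}\leq p^\star_{2s}\leq \frac{1}{\pi s}$, and combine with Proposition \ref{prop:factor2ndMoment} and \eqref{eq:bound2ndMoment} to obtain
\begin{equation*}
\sum_{v=0}^{2s\wedge T} U_N(v)\,p^\star_{v+2s}\;\leq\;\frac{1}{\pi s}\,\IE[W_{2s\wedge T}(\b_N)^2]\;\leq\;\frac{1+\varepsilon_N}{\pi s\bigl(1-\hat\b^2\tfrac{\log(2s)}{\log N}\bigr)},
\end{equation*}
which equals $\frac{1+\varepsilon_N}{\pi}F(s)$ up to the cost $\log(2s)/\log N-\log s/\log N=O(1/\log N)$, absorbed into $\varepsilon_N$. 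For $v>2s$, the a priori bound \eqref{eq:boundP2P2ndMomentC} gives $U_N(v)\leq \frac{C}{(1-\hat\b^2)^2\,v\log N}$ for $v\leq N$, and together with $p^\star_{v+2s}\leq \frac{2}{\pi v}$ this yields
\begin{equation*}
\sum_{v=2s+1}^{T} U_N(v)\,p^\star_{v+2s}\;\leq\;\frac{C}{(1-\hat\b^2)^2\log N}\sum_{v>2s}\frac{1}{v^2}\;\leq\;\frac{C}{(1-\hat\b^2)^2\,s\,\log N},
\end{equation*}
which is smaller than $\frac{1}{\pi s}$ by a factor $O(1/\log N)$ and therefore also absorbs into $\varepsilon_N$.

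The principal technical obstacle is propagating the constants so that the leading coefficient is exactly $1/\pi$ and the $F(s)$ on the right-hand side depends on $s$ (not $2s$); this is what forces the careful $\log(2s)$ vs.\ $\log s$ comparison and drives the exact form of the $(1+\varepsilon_N)^m$ correction. Since $m$ is controlled polynomially in $q$ in the subsequent induction, per-step multiplicative losses of size $1+\varepsilon_N$ are harmless.
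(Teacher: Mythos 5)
Your proof follows essentially the same strategy as the paper's: sum the $v$-variables out of \eqref{eq:BoundAmN} one at a time, observing that each $v_r$ ($r<m$) enters only through the single factor $U_N(v_r)\,p^\star_{v_r+2u_{r+1}+2\tilde u_{r+1}}$, bound $\sum_{v}U_N(v)p^\star_{v+2s}$ by splitting the sum according to whether $v$ is small or large compared to $s$, and use \eqref{eq:firstequality} together with \eqref{eq:bound2ndMoment} for the bulk and \eqref{eq:boundP2P2ndMomentC} for the tail. The only cosmetic deviation is your choice of split point ($v\le 2s$ versus the paper's $v\le\lfloor s\rfloor$), which changes nothing since the $\log 2$ discrepancy is harmlessly absorbed into $\varepsilon_N$ by the same uniform comparison the paper performs when trading $\sigma_N^2 R_{\lfloor w\rfloor}$ for $\hat\beta^2\log w/\log N$.
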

\begin{proof}
  By \eqref{eq:BoundAmN}, \eqref{eq:asympt2ndMoment}  and \eqref{eq:firstequality}, summing over $v_m$ in $A_{m,N,\mathbf I}$ gives a factor bounded by
  $\frac{1}{1-\hat \beta^2}(1+o(1))$. We will now estimate the sum over the variable $v_{m-1}$. Let $w=u_m + \tilde u_m$. 
 (Note that by definition $3/2\leq w\leq T\leq N$,
  and that $w$ might be a non-integer multiple of $1/2$.)
Writing  $v=v_{m-1}$,  the sum over $v_{m-1}$ in \eqref{eq:BoundAmN}
gives a factor 
%\CC{(it was written before $2u_m + 2\tilde u_m$ instead of $2w$)}
\begin{equation} \label{eq:sumOnvraw}
\sum_{v=0}^T U_N(v) p^\star_{v+2w}
=: S_{\leq w} + S_{> w} ,
\end{equation}
where $S_{\leq w}$ is the sum on the left hand side of \eqref{eq:sumOnvraw} restricted to $v\leq \lfloor w \rfloor$ and $S_{> w}$ is the sum for $v> \lfloor w \rfloor$. 
%Recall \eqref{eq:pnstar}. 
Using
\eqref{eq:pnstar} and \eqref{eq:boundP2P2ndMomentC},
 there exists a constant $C=C(\hat \beta)>0$ such that
\begin{equation} \label{bound_SbiggerU}
S_{>w}\leq \frac{C}{\log N}\sum_{v=\lfloor w \rfloor +1}^T \frac{1}{v^2} \leq  \frac{1}{\log N}\frac{C}{w}.
\end{equation}
Using \eqref{eq-monotone} and 
%\sout{\eqref{eq:upper_boundRn}} \CC{
\eqref{eq:firstequality},
\begin{equation} \label{eq:initialestimateSleqU}
\begin{aligned}
S_{\leq w} \leq p^\star_{2w} \sum_{v=0}^{\lfloor w \rfloor} U_N(v)=p^\star_{2w} \IE[W_{\lfloor w \rfloor }^2]& \leq p^\star_{2w}  \frac{1}{1- \sigma_N^2 R_{\lfloor w \rfloor}}.
\end{aligned}
\end{equation}
where the second inequality
%\CC{last} upper bound 
holds by \eqref{eq:bound2ndMoment} 
for all $N\geq N_0(\hat \beta)$ since $w\leq N$.
%to bound from above $p^\star_{2w}$ and $R_w = \sum_{n=0}^w p_{2n}(0)$,
Let $\delta_N = \delta(N,\hat \beta)\to 0$ such that \eqref{eq:LLTsigmaN} holds, and let  $N'_0 = N'_0(\hat \beta)>N_0(\hat \beta)$ be such that $\sup_{N\geq N_0'} \sup_{n\leq N}\hat \beta^2 \frac{1+\log n}{\log N}(1+\delta_N) < 1$. By \eqref{eq:pnstar} and \eqref{eq:upper_boundRn}, we obtain that
\begin{equation*}
p^\star_{2w}  \frac{1}{1- \sigma_N^2 R_w}  \leq   \frac{1}{\pi} \frac{1}{w} \frac{1}{1- \hat \beta^2 \frac{1+\log w}{\log N}(1+\delta_N)}.
\end{equation*}
Moreover, as there is $C(\hat \beta)\in (0,\infty)$ such that
\[\sup_{N\geq N_0'}\sup_{n\leq N} \frac{1}{1- \hat \beta^2 \frac{1+\log n}{\log N}(1+\delta_N)} \leq C(\hat \beta),\]
we see that there exists $\varepsilon'_N=\varepsilon'(N,\hat \b)\searrow_{N\to\infty} 0$ such that for all $n\leq N$,
\begin{align*}
 \left|\frac{1}{1- \hat \beta^2 \frac{1+\log n}{\log N}(1+\delta_N)} - \frac{1}{1- \hat \beta^2 \frac{\log n}{\log N}} \right|  &  \leq \frac{\varepsilon_N'}{1- \hat \beta^2 \frac{\log n}{\log N}}.
\end{align*}
Coming back to \eqref{eq:initialestimateSleqU}, we obtain that for all $N\geq N_0'(\hat \beta)$,
\begin{equation} \label{eq:estimateSleqU}
S_{\leq w} \leq \frac{1}{\pi w} \frac{1+\varepsilon_N'}{1- \hat \beta^2 \frac{\log w}{\log N}}.
\end{equation}
We finally obtain from \eqref{eq:estimateSleqU} and \eqref{bound_SbiggerU} that there exists $\varepsilon'_N=\varepsilon'(N,\b){\searrow_{N\to\infty} 0}$
such that the sum in \eqref{eq:sumOnvraw} is smaller than
\[\left(1+\varepsilon'_N\right)\frac{1}{\pi w}  \frac{1}{1- \hat \beta^2 \frac{\log w}{\log N}} = \left(1+\varepsilon'_N\right)\frac{1}{\pi} F\left(u_m+\tilde u_m\right).
\]
Repeating the same observation for $v_{m-2},\dots,v_{1}$ leads to Proposition \ref{prop:boundAmNtildeBis}.
\end{proof}

\subsection{The induction pattern}
\label{subsec-induction}
Our next goal is to sum over $(u_r)_{r\leq m}$ that appear in \eqref{eq:boundAmntildeSansv}. We will sum by induction starting from $r=m$ and going down to $r=1$. To do so, we first need to define the notion of \emph{good} and  \emph{bad} indices $r$. While performing the induction, encountering a \emph{bad} index will add some nuisance term to the estimate. {We will then show that,}
for typical diagrams, the \emph{bad} indices are rare enough so that the nuisance can be neglected.

 Let $L=L_N\in\mathbb N\setminus\{1,2\}$ to be determined later. Given a diagram $\mathbf I \in \mathcal D(m,q)$, we say that
{$r\in \llbracket 1,m\rrbracket$} is a \emph{long jump} if $r-{\bar k_r} > L + 2$, which means that the last times that the two particles $i_r,j_r$ have been involved in an intersection are not too recent. We say that $r$ is a \emph{small jump} if it is not a long jump. (See Figure
\ref{fig:caseslong} for a pictorial description of short (top) and long (bottom)
jumps).
Since small jumps reduce drastically the combinatorial choice on the new couple that intersects, the diagrams that will contribute to the moments will contain mostly long jumps. Let $K=K(\mathbf I)$ denote the number of {small jumps} and $s_1<\dots<s_{K}$ denote the indices of {small jumps}. We also set $s_{K+1} = m+1$. For all $i\leq K+1$ such that $s_i - s_{i-1} > L+1$, 
we mark the following indices $\{s_{i} - kL-1:k\in \mathbb N, s_{i}-kL
-1>s_{i-1}\}$ as \emph{stopping} indices. We then call any {long jump} $r$ a \emph{fresh} index if $r$ is {stopping} or if $r+1$ is a {small jump}. 
Note that any stopping index is a fresh index. If $m$ is a long jump we also mark it as a fresh index.
The idea is that 
%\sout{all indices smaller than a fresh index avoid nuisance terms} \CC{
if $k$ is a fresh index and $k-1$ is a  long jump, then $k-1,k-2,\dots$ avoid nuisance terms until 
%\sout{we stumble on a} \CC{
$k-i$ is a 
stopping index or a small jump; we remark that since our induction will be downward
from $m$, these nuisance-avoiding indices occur in the induction \textit{following}
a fresh index. Hence we say that an index $r$ is \emph{good} if it is a long jump that is
not fresh. An index $r$ is \emph{bad} if it is not good.
For any given diagram, one can easily determine the nature of all indices via the following procedure: (i) mark all small jumps; (ii) mark every stopping index; (iii) mark all fresh indices; (iv) all the remaining indices that have not been marked are good indices.

For all $\mathbf I\in \mathcal D(m,q)$, we define for all $r< m$,
\begin{align*}
  &\varphi(r) = \varphi(r,\mathbf I) = \\
  &\begin{cases}
\inf\{r'\in \llbracket r,m\rrbracket, r' \text{ is fresh}\}-L & \text{if $r$ is not a stopping index and $r+1$ is a long jump},\\
r & \text{otherwise}.
\end{cases}
\end{align*}
We also set $\varphi(m)=m$. 
Here are a few immediate observations:
\begin{lemma} 
\begin{enumerate}[label=(\roman*)]
\item \label{lemmata1} If $r$ is good, then $r+1$ is a long jump.
\item \label{lemmata2} If $r\in \llbracket 2,m-1\rrbracket$ is good, then $\varphi(r-1) = \varphi(r)$.
\item \label{lemmata3} If $r\in \llbracket 2,m\rrbracket$ is fresh, then $\varphi(r-1) = r-L$.
\item \label{lemmata4} If $r\in \llbracket 1,m-1\rrbracket$ is such that $r+1$ is a long jump, then $\varphi(r)\leq r$.
\end{enumerate}
\label{lemmata}
\end{lemma}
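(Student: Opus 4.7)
The proof should proceed by careful case analysis, unwinding the definitions of \emph{long}, \emph{small}, \emph{stopping}, \emph{fresh} and \emph{good} indices, and leveraging one key structural fact: within each interval $(s_{i-1},s_i)$, consecutive fresh indices are exactly $L$ apart (with stopping indices at $s_i-kL-1$ for $k\geq 1$, plus the boundary index $s_i-1$ which is fresh because $(s_i-1)+1=s_i$ is small).

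For \textbf{(i)}, I would simply unfold the definition: ``good" means long and not fresh. If $r=m$, then $m$ being a long jump forces $m$ to be fresh (by the last sentence defining fresh), contradicting goodness; hence $r<m$. Being not fresh but long then forces $r+1$ to be neither small nor $m+1$, so $r+1\leq m$ is long. For \textbf{(iii)}, the crucial substep is the following structural claim: if $r\geq 2$ and $r-1$ is stopping, then $r$ cannot be fresh. Write $r-1=s_i-kL-1$ with $k\geq 1$, so $r=s_i-kL\in(s_{i-1},s_i)$. Checking the three ways $r$ could be fresh: (a) $r$ stopping would force $kL=k'L+1$ for some $k'\geq 1$, impossible for integer $L\geq 3$; (b) $r+1=s_i-kL+1$ lies strictly inside $(s_{i-1},s_i)$ so cannot equal any $s_j$; (c) $r=m$ would force $r<m$ anyway since $r\leq s_i-L<s_i\leq m+1$. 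Hence $r$ is not fresh, contradicting the hypothesis of (iii). So $r-1$ is not stopping; since $r$ is long (fresh $\Rightarrow$ long) one then has $\varphi(r-1)=\inf\{r'\geq r-1:r'\text{ fresh}\}-L$, and the infimum equals $r$ (as $r-1$ is not itself fresh: $r-1$ is not stopping, $(r-1)+1=r$ is not small, and $r-1<m$).

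For \textbf{(ii)}, I would split according to whether $r-1$ is stopping. In the \emph{non-stopping} case, $r-1$ is also not fresh (same argument as above), so $\inf\{r'\geq r-1:r'\text{ fresh}\}=\inf\{r'\geq r:r'\text{ fresh}\}$, giving $\varphi(r-1)=\varphi(r)$ directly from the ``non-otherwise" branch of the definition, which applies because $r$ is long. In the \emph{stopping} case, $\varphi(r-1)=r-1$ by the ``otherwise" branch. Writing $r-1=s_i-kL-1$, $k\geq 1$, I then compute $\varphi(r)$ explicitly: the next fresh index $\geq r$ inside $(s_{i-1},s_i)$ is $s_i-(k-1)L-1=r+L-1$ (if $k\geq 2$ this is the next stopping; if $k=1$ it is $s_i-1$, which coincides with $r+L-1$). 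Since $r+L-1<s_i$ and $r$ is good (so not fresh), nothing smaller qualifies, giving $\varphi(r)=(r+L-1)-L=r-1=\varphi(r-1)$.

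For \textbf{(iv)}, the point is to exhibit a fresh index in $[r,r+L]$ whenever $r$ itself is not stopping. If $r$ is stopping, $\varphi(r)=r$ by definition and we are done. Otherwise, write $r+1\in(s_{i-1},s_i]$ (using $s_{K+1}=m+1$). Using the spacing observation above, the fresh indices inside $(s_{i-1},s_i)$ form an arithmetic progression of step $L$ ending at $s_i-1$ (and $m$ is fresh when $i=K+1$ with $m$ long, playing the same role). The first fresh index above $s_{i-1}$ lies at most $L$ positions above $s_{i-1}$, and thereafter fresh indices repeat every $L$ steps up to $s_i-1$; consequently the next fresh index above $r$ lies at distance at most $L$ from $r$, so $\inf\{r'\geq r:r'\text{ fresh}\}\leq r+L$ and $\varphi(r)\leq r$. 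The one scenario needing attention is when $r+L$ crosses $s_i$: but then $s_i-1\in[r+1,r+L]$ is itself fresh (as $s_i-1$ is long whenever $(s_{i-1},s_i)$ is nonempty), closing this case.

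The main obstacle is the structural claim used throughout: that $r-1$ being stopping forces $r$ to fail all three fresh conditions simultaneously. This relies essentially on $L\geq 3$ (to kill $kL-k'L=1$) and on noting that $r$ lives strictly inside the same inter-small-jump block as $r-1$. Once this is in place, the rest is definition-chasing.
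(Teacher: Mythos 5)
Your proof is correct and follows essentially the same route as the paper's: the same unfolding of good/fresh/stopping definitions for (i)–(ii), the same key observation for (iii) (you state it as "$r-1$ stopping $\Rightarrow$ $r$ not fresh", the paper phrases the contrapositive), and the same exhibition of a fresh index within $L$ steps of $r$ for (iv), which you package via the arithmetic-progression picture of fresh indices rather than the paper's direct case split on $s_{i+1}-r \lessgtr L+1$.
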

\begin{remark} \label{rk:varphi}
  Point \ref{lemmata4} ensures that $\varphi(r)\leq r$ for all $r\leq m$. By \ref{lemmata2}, this implies in turn that $\varphi(r)\leq r-1$ when $r$ is good.
\end{remark}
\begin{proof}
Proof of \ref{lemmata1}. Suppose that $r$ is good. It must be that $r<m$ since by definition $m$ is either fresh or a small jump. Now, $r+1$ must be a long jump otherwise $r$ would be fresh.

Proof of \ref{lemmata2}. Let $r\in \llbracket 2,m-1\rrbracket$ be a good index.  We distinguish two cases. First suppose that $r-1$ is not a stopping index. Then $r-1$ cannot be fresh because $r$ is not a small jump.  Therefore $\varphi(r-1) = \inf\{r' > r-1,r'\text{ is fresh}\}-L$. Furthermore, by \ref{lemmata1}, we have that $\varphi(r) = \inf\{r'\geq r,r'\text{ is fresh}\}-L$ and  thus $\varphi(r-1) = \varphi(r)$.  Now assume that $r-1$ is stopping. Then $\varphi(r-1) = r-1$. Moreover, by definition $r,\dots,r+L-1$ are long jumps and either $r+L-1$ is a stopping index or $r+L$ is a small jump. Therefore $r+L-1$ is a fresh index and $r,\dots,r+L-2$ are good, so that $\varphi(r) = (r+L-1)-L = r -1 = \varphi(r-1)$.

Proof of \ref{lemmata3}. Let $r\in \llbracket 2,m-1\rrbracket$ be a fresh index. We first note that $r-1$ cannot be a stopping index. Indeed, if $r$ is a stopping index, then $r-1$ cannot be stopping by definition; if $r$ is not a stopping index, then as $r$ is fresh, $r+1$ must be a small jump and thus $r-1$ cannot be stopping. Now, as $r-1$ is not stopping and $r$ is fresh, we obtain that $\varphi(r-1) = r-L$. (Note that $r-1$ cannot be fresh because $r-1$ is not stopping and $r$ is a long jump.)

Proof of \ref{lemmata4}. It is enough to show that $r_0 := \inf\{r'\in \llbracket r,m\rrbracket, r' \text{ is fresh}\}\leq r+L$. Since $s_1 = 1$ and $s_{K+1}=m+1$, we can find $i\leq K$ such that $r\in [s_i,s_{i+1})$. 
Now first suppose that $s_{i+1}-r \leq L+ 1$. As $r+1$ is a long jump, $s_{i+1}> r + 1 \geq s_i + 1$ and so $s_{i+1}-1$ is a long jump because it is in $(s_i,s_{i+1})$. Hence $s_{i+1} - 1$ is fresh (note that this remains true when $s_{i+1} = m+1$) and we obtain that $r_0\leq s_{i+1}-1\leq r+L$.  
Otherwise if $s_{i+1}-r > L+ 1$, we can let $r_\star = s_{i+1}-k_0L-1$, $k_0\in \mathbb N$, be the stopping index of $(s_i,s_{i+1})$ that satisfies $r_\star - L < r \leq r_\star$ (this is the smallest stopping time larger than $r$). By definition $r_\star$ is fresh, therefore $r_0 \leq r_\star \leq r+L$.
\end{proof}

For all
$v\in [1,T]$, we further let
\[f(v)= \frac{\log N}{\hat \beta^2}  \log \left(\frac{1-\hat \beta^2\frac{\log v}{\log N}}{1-\hat \beta^2\frac{\log T}{\log N}}\right).\]
Note that $f$ is non-increasing.
Recall \eqref{eq-tildeudef} and the definition of $F$ in \eqref{eq-F}.
\begin{lemma} \label{lem:fibo}
 For all
  $m\geq 2$, $\mathbf I\in \mathcal D(m,q)$,  $k\in \llbracket 1,m-1 \rrbracket$ and $\sum_{i=1}^{m-k} u_i \leq T$ with $u_i\in \llbracket 1,T\rrbracket$,
\begin{equation} \label{eq:lemmaInduction}
\begin{aligned}
& \sum_{u_i \in \llbracket 1,T\rrbracket, m-k < i \leq m }  \prod_{r=m-k+1}^{m}
F(u_r + \tilde u_r) \mathbf{1}_{\sum_{i=1}^{m-k+1} u_i \leq T}  \\
&\leq \sum_{i=0}^k \frac{c_{i}^k}{(k-i)!}  \frac{1}{\left(1-\hat \beta^2 \right)^{i}} f\left(\sum_{i=\varphi(m-k)}^{m-k} u_i\right)^{k-i}.
\end{aligned}
\end{equation}
with $c_0^1=1$, $c_1^1=2$, $c_i^{k+1}= c_{i}^k +2\gamma_{k}^m 
\sum_{j=0}^{i-1} c_{j}^k$ for $i\leq k+1$ with $\gamma_k^m = \mathbf{1}_{m-k \text{ is bad}}$ and $c_i^k=0$ for $i>k$.
\end{lemma}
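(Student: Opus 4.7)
My plan is to prove Lemma~\ref{lem:fibo} by induction on $k$ from $k=1$ to $k=m-1$. The key analytic tool throughout is the identity $f'(v)=-F(v)$, which follows by direct differentiation from the definitions of $f$ and $F$; this allows sums of $F$ (and of $F$ weighted by powers of $f$) to be evaluated via the antiderivatives of the corresponding powers of $f$, using $F\,df^{k-i}=\tfrac{1}{k-i+1}\,d(f^{k-i+1})$.

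For the base case $k=1$, the left-hand side is $\sum_{u_m=1}^{T-\sum_{i<m}u_i}F(u_m+\tilde u_m)$, which by monotonicity of $F$ and the identity $f'=-F$ is at most $f(1+\tilde u_m)$. A case split on whether $m$ is a long jump (hence fresh, by convention) or a small jump, combined with Lemma~\ref{lemmata}(iii) and the Lipschitz estimate $f(a)-f(b)\le(1-\hat\beta^2)^{-1}\log(b/a)$ for $a\le b$ (itself a consequence of $F(v)\le(v(1-\hat\beta^2))^{-1}$), gives $f(1+\tilde u_m)\le f\bigl(\sum_{j=\varphi(m-1)}^{m-1}u_j\bigr)+2(1-\hat\beta^2)^{-1}\mathbf{1}_{\{m\ \text{is small}\}}$, which matches the stated values $c_0^1=1$ and $c_1^1=2$.

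For the inductive step $k\to k+1$, I fix $u_1,\ldots,u_{m-k-1}$, isolate the outermost sum over $u_{m-k}$, and apply the inductive hypothesis to the inner sum over $u_{m-k+1},\ldots,u_m$. This reduces the problem to estimating, for each $i=0,\ldots,k$,
\[
S_i := \sum_{u_{m-k}} F(u_{m-k}+\tilde u_{m-k})\, f\!\Bigl(\sum_{j=\varphi(m-k)}^{m-k}u_j\Bigr)^{k-i}.
\]
If $m-k$ is good, Lemma~\ref{lemmata}(ii) gives $\varphi(m-k-1)=\varphi(m-k)$, and the definitions of $\varphi$, $\bar k_{m-k}$ and $\tilde u_{m-k}$ can be combined to show that the argument of $f$ inside $S_i$ agrees with $u_{m-k}+\tilde u_{m-k}$, up to a shift by a non-negative quantity depending only on the fixed variables. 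The substitution $v=u_{m-k}+\tilde u_{m-k}$ and the antiderivative identity then telescope $S_i$ to the clean bound $\tfrac{1}{k-i+1}f\bigl(\sum_{j=\varphi(m-k-1)}^{m-k-1}u_j\bigr)^{k-i+1}$, yielding exactly the $c_i^{k+1}=c_i^k$ contribution. If $m-k$ is bad, the two arguments need not align; I split the range of $u_{m-k}$ at the crossover point and apply $F(v)\le(v(1-\hat\beta^2))^{-1}$ on one side (with $f$ estimated by its largest value on that range) and the antiderivative identity on the other. This produces the clean $f^{k-i+1}$ contribution plus an extra term of size $f^{k-i}/(1-\hat\beta^2)$, whose collection across $i$ produces exactly the $2\gamma_k^m\sum_{j<i}c_j^k$ increment in the recursion for $c_i^{k+1}$.

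The main obstacle is the bookkeeping in the good case: verifying that the arguments of $F$ and $f$ align (or misalign in the favorable direction) requires combining all four assertions of Lemma~\ref{lemmata} with the combinatorial definitions of $\varphi$, $\bar k_r$, and $\tilde u_r$; in particular one must track when an index belongs to the window $(\bar k_{m-k}, m-k-1]$ controlling $\tilde u_{m-k}$ versus when it belongs to $[\varphi(m-k), m-k-1]$, and the fact that long-jump indices ensure $\varphi(m-k)\ge \bar k_{m-k}+1$. The bad case is handled brutally, which is why the recursion admits a factor $2$ (rather than $1$) and why the parameter $L$ is chosen (in the rest of Section~\ref{sec-notriple}) so that bad indices remain rare enough to be absorbed.
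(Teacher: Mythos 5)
Your overall architecture mirrors the paper's: an induction on $k$, with a one-step reduction (summing out $u_{m-k}$) resolved by a case split on whether $m-k$ is good or bad, driven by the antiderivative identity $f'=-F$ and the monotonicity of $F$ and $f$. The good case and the base case are handled in essentially the paper's way (the paper isolates the one-step reduction as a separate Lemma~\ref{lem:indReduction}, which you fold into the inductive step, but this is a bookkeeping difference not a genuine one).

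There is, however, a genuine gap in your treatment of the bad case. You claim that a bad index produces \emph{one} extra term, of size $f^{k-i}/(1-\hat\beta^2)$, and that ``collection across $i$'' recovers the increment $2\gamma_k^m\sum_{j<i}c_j^k$. These two claims are inconsistent: a single nuisance term of degree $f^{k-i}$ produced from the $i$-th term of the inductive hypothesis contributes only to the coefficient of $f^{(k+1)-(i+1)}$, and so it can only feed $c_{i-1}^{k}$ into $c_{i}^{k+1}$. That would give the recursion $c_i^{k+1}=c_i^k+2\gamma_k^m c_{i-1}^k$, which is strictly weaker than (and different from) the stated $c_i^{k+1}=c_i^k+2\gamma_k^m\sum_{j=0}^{i-1}c_j^k$.

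The reason the paper's recursion has a \emph{sum} over $j<i$ is that one integration by parts does not close the estimate: after one step you are left with $\int_1^v f(x)^{j-1}\,dx$, which is not of the form $v f(v)^{j-1}$, and you must iterate. After iterating to depth $j+1$ you obtain a full cascade
\[
f(1)^j+\int_1^v f(x)^j\,dx\;\le\;v\sum_{l=0}^{j}\frac{j!}{(j-l)!}\Bigl(\frac{1}{1-\hat\beta^2}\Bigr)^{l}f(v)^{j-l},
\]
and it is precisely these $j+1$ nuisance terms, of every degree from $f^{j}$ down to $f^{0}$, that populate an entire row of the triangular matrix used in the paper's proof; summing columns then yields $c_i^{k+1}=c_i^k+2\gamma_k^m\sum_{j<i}c_j^k$. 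In short, your bad-case analysis produces one off-diagonal matrix entry per row where the correct argument produces an entire upper-triangular tail, so the recursion you actually derive is not the one you need.

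A smaller point: the bound $\sum_{u_m\ge 1}F(u_m+\tilde u_m)\le f(1+\tilde u_m)$ in your base case loses the leading summand (comparison of a decreasing sum with an integral retains the first term $F(1+\tilde u_m)$), so some additional care is needed there as well; this is not fatal because the extra term is absorbed into the constant $c_1^1=2$, but it should be tracked explicitly.
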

\begin{remark}
The $c_i^k$'s depend on $m$ and $\mathbf I\in \mathcal D(m,q)$.
\end{remark}
Before turning to the proof, we need another result that plays a key role in the proof of Lemma \ref{lem:fibo} and which clarifies the role of good indices.
\begin{lemma} \label{lem:indReduction}
%There exists $T_0$ so that
  For all $k\in \llbracket 0,m-2 \rrbracket$, $j\leq k$ and $\sum_{i=1}^{m-k-1} u_i \leq T$ with $u_i\in \llbracket 1,T\rrbracket$,
\begin{equation}\label{eq:indReduction}
\begin{aligned}
&S_kf^j(u_1,\dots,u_{m-k-1}) := \\
&  \sum_{u_{m-k}=1}^T
F(u_{m-k} + \tilde u_{m-k})  f\left(\sum_{i=\varphi(m-k)}^{m-k} u_i\right)^{j} \mathbf{1}_{\sum_{i=1}^{m-k} u_i \leq T}
\\ & \leq  \frac{1}{j+1} f\left(\sum_{i=\varphi(m-k-1)}^{m-k-1} u_i\right)^{j+1} \\
&+ \gamma_k^m  \sum_{l=1}^{j+1} \frac{j!}{(j+1-l)!} \frac{2}{\left(1-\hat \beta^2\right)^{l}} f\left(\sum_{i=\varphi(m-k-1)}^{m-k-1} u_i\right)^{j+1-l}.
\end{aligned}
\end{equation}
\end{lemma}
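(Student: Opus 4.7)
The plan is to view the sum $S_k f^j(u_1,\ldots,u_{m-k-1})$ as a discrete approximation of $\int F(u+\tilde u_{m-k})\, f(a+u)^j\, du$, where $a=\sum_{i=\varphi(m-k)}^{m-k-1} u_i$. The key identity is $F(u)=-\tfrac{d}{du} f(u)$, which follows by directly differentiating the explicit expressions of $F$ and $f$ (both are defined via the same combination $1-\hat\beta^2\log(\cdot)/\log N$). Thus any sum of the form $\sum F\cdot f^j$ can, after a suitable change of variables, be collapsed to $f^{j+1}/(j+1)$ via an antiderivation, up to boundary contributions.

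I would split the argument on whether $m-k$ is good or bad, mirroring the indicator $\gamma_k^m$. In the \emph{good} case, $\gamma_k^m=0$ and Lemma~\ref{lemmata2} yields $\varphi(m-k-1)=\varphi(m-k)$, so setting $a'=\sum_{i=\varphi(m-k-1)}^{m-k-1} u_i$ one has $\sum_{i=\varphi(m-k)}^{m-k} u_i = a'+u_{m-k}$. The inequality $\tilde u_{m-k}\geq a'$ follows from the definitions: since $m-k$ is a long jump one has $\bar k_{m-k}<m-k-L-2$, and since $m-k$ is not fresh and not stopping with $m-k+1$ a long jump (Lemma~\ref{lemmata1}), $\varphi(m-k)\geq m-k-L>\bar k_{m-k}$, so $\tilde u_{m-k}=\sum_{i=\bar k_{m-k}+1}^{m-k-1} u_i \geq \sum_{i=\varphi(m-k)}^{m-k-1} u_i =a'$. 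Using the monotonicity of $F$ and $f$ on $[1,T]$ together with the constraint $\sum_{i=1}^{m-k} u_i\leq T$,
\begin{equation*}
S_k f^j \leq \sum_{u_{m-k}=1}^{T-\sum_{i<m-k}u_i} F(a'+u_{m-k})\, f(a'+u_{m-k})^j \leq \int_{a'}^{T} F(v)\,f(v)^j\, dv = \frac{f(a')^{j+1}}{j+1},
\end{equation*}
where the last equality uses $F=-f'$ and $f(T)=0$, giving the first term on the RHS of \eqref{eq:indReduction} (the second term vanishes).

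In the \emph{bad} case, $\gamma_k^m=1$ and the alignment above can fail in two ways: either $\tilde u_{m-k}<a'$, or (more typically) $\varphi(m-k)\neq \varphi(m-k-1)$, as happens for instance when $m-k$ is fresh, where $\varphi(m-k-1)=m-k-L$ (Lemma~\ref{lemmata3}) while $\varphi(m-k)=m-k$. The strategy is still to use $F=-f'$, but now the argument of $f$ inside the LHS differs from $a'+u_{m-k}$, so one expands $f(\text{sum})^j$ around $f(a')$ via $f(x)=f(a')-\int_{a'}^{x} F(s)\,ds$ and applies the binomial theorem. Each iterated integral of $F$ on $[1,N]$ is controlled using $F(s)\leq \frac{1}{s(1-\hat\beta^2)}$, producing a factor $(1-\hat\beta^2)^{-l}$ at level $l$; the combinatorial coefficient $\frac{j!}{(j+1-l)!}$ emerges from the $(l{-}1)$-fold expansion of $f^{j}$ (equivalently, from $\binom{j}{l-1}(l-1)!$); and the factor $2$ comfortably covers both sources of badness simultaneously.

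The main obstacle is the bookkeeping in the bad case: one must verify that the three sub-cases (small jump, fresh, stopping) all admit the same uniform estimate, and that the binomial-expansion coefficients combine exactly into $\frac{j!}{(j+1-l)!}\frac{2}{(1-\hat\beta^2)^l}$. The good case reduces to a clean monotone integration after the identity $\tilde u_{m-k}\geq a'$ drawn from Lemma~\ref{lemmata}; the bad case's expansion of $f(\text{shifted argument})^j$ around $f(a')$ with sharp boundary control, uniformly across sub-cases, is where the delicate analysis lies.
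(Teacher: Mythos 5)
Your treatment of the good case is essentially the paper's Case 1 and is correct: when $m-k$ is good, Lemma \ref{lemmata}-\ref{lemmata2} aligns $\varphi(m-k)$ and $\varphi(m-k-1)$, the long-jump condition gives $\tilde u_{m-k}\geq v$, and a single monotone integral comparison with $F=-f'$ yields $\frac{1}{j+1}f(v)^{j+1}$.

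The bad case, however, is where the lemma actually requires care, and your proposal there is a sketch rather than a proof, and moreover the route you indicate does not produce the asserted coefficients. Concretely: in the fresh case $\varphi(m-k)=m-k$, so the argument of $f$ inside the sum is just $u_{m-k}$. If you expand $f(u_{m-k})^j=\sum_{l=0}^{j}\binom{j}{l}f(a')^{j-l}\bigl(f(u_{m-k})-f(a')\bigr)^l$ and then sum the $l=0$ term against $\sum_{u_{m-k}}F(u_{m-k}+\tilde u_{m-k})$, the best you get is $f(a')^j\int_{a'}^{T}F(x)\,\dd x=f(a')^{j+1}$, which is $(j+1)$ times larger than the leading $\gamma$-free term $\frac{1}{j+1}f(a')^{j+1}$ in \eqref{eq:indReduction}. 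The excess cannot be absorbed into the $\gamma$-weighted tail, whose leading term $\frac{2j}{1-\hat\beta^2}f(a')^{j}$ carries a lower power of $f(a')$. So the binomial-expansion strategy, as outlined, overshoots the target inequality.

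What the paper does instead --- and what is needed --- is a split of the $u_{m-k}$-sum at $u_{m-k}=v$ (with $v=\sum_{i=\varphi(m-k-1)}^{m-k-1}u_i$ in the fresh case, and $v=u_{m-k-1}$ in the small-jump case). The tail $u_{m-k}>v$ again compares to $\int_v^T F(x)f(x)^j\,\dd x=\frac{1}{j+1}f(v)^{j+1}$, giving exactly the $\gamma$-free term; the head $u_{m-k}\leq v$ is controlled by noting that there $F(u_{m-k}+\tilde u_{m-k})\leq\frac{1}{v(1-\hat\beta^2)}$ (resp.\ $\frac{2}{v(1-\hat\beta^2)}$ in the small-jump case, since only $\tilde u_{m-k}\geq u_{m-k-1}/2$ is available), so that a bound is needed only on $\sum_{u\leq v}f(u)^j$; this is handled by iterated integration by parts and gives precisely the $\frac{j!}{(j+1-l)!}(1-\hat\beta^2)^{-l}$ corrections. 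Note also that the factor $2$ is not a blanket safety margin: it is required specifically for the small-jump sub-case; the fresh sub-case only needs the factor $1$. To repair your proposal you would need to adopt the split-at-$v$ decomposition (or an equivalent device that separates the near-boundary contribution), since a direct binomial expansion around $f(a')$ does not recover the $\frac{1}{j+1}$ in the leading term.
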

\begin{remark}
  When $m-k$ is good, the {right-hand side of \eqref{eq:indReduction}}
  is reduced to a single term. When $m-k$ is bad, a nuisance term appears.
\end{remark}
\begin{proof}
  We divide the proof {into} three cases.

  \textbf{Case 1:} $m-k$ is good. Necessarily $m-k+1$ is a long jump
  by Lemma \ref{lemmata}-\ref{lemmata1}, 
  so if we let $r_{\text{fresh}} = \inf\{r'>m-k,r' \text{ is fresh}\}$, then $\varphi(m-k)=
  r_{\text{fresh}} - L$. By Remark \ref{rk:varphi} and since $r_{\text{fresh}} > m-k$, we have
\[(m-k)-L < \varphi(m-k) \leq (m-k)-1.\]
Define
\[v:=\sum_{i=\varphi(m-k)}^{m-k-1} u_i \in \llbracket 1,T\rrbracket .\]
 As $m-k$ is a long jump, we first observe that
\begin{equation} \label{eq:umtildebig}
\tilde u_{m-k} \geq u_{m-k-1} +\dots + u_{m-k-L-1}\geq v.
\end{equation}
Since $F$ and $f$ are non-increasing, see \eqref{eq-F},
this implies that
\begin{align*}
&S_kf^j \leq \sum_{u_{m-k}=1}^T  F\left(u_{m-k}+v\right) f\left(u_{m-k} + v\right)^{j} \mathbf{1}_{u_{m-k} + v \leq T} \\ &\leq \int_{v}^T  \frac{1}{u} \frac{1}{1-\hat \beta^2 \frac{\log (u) }{\log N}} f(u)^{j} \dd u
 = \left[ - \frac{1}{j+1} f(x)^{j+1} \right]^{T}_{v} =  \frac{1}{j+1} f(v)^{j+1},
\end{align*}
where in the comparison to the integral, we have used that
$F(x) f(x)^j$ decreases in $x\in [1,\ldots T]$.
Given that $\varphi(m-k-1) = \varphi(m-k)$ by Lemma \ref{lemmata}-\ref{lemmata2}, we have the identity $v = \sum_{i=\varphi(m-k-1)}^{m-k-1} u_i$. Hence \eqref{eq:indReduction} holds.

\textbf{Case 2:} $m-k$ is fresh. By Lemma \ref{lemmata}-\ref{lemmata3}, we have $\varphi(m-k-1) = m-k-L$. Note that in contrast with Case 1, we have from the definition that $\varphi(m-k)=m-k$ and therefore
%(this follows directly from the definition) so 
we cannot follow the same argument as for Case 1. Instead, we now
define $v$ summing from $\varphi(m-k-1)$ and not $\varphi(m-k)$, i.e.
\[v:=\sum_{i=\varphi(m-k-1)}^{m-k-1} u_i \in \llbracket 1,T\rrbracket.\] 
We then decompose
\begin{equation} \label{eq:decSk} S_k f^j = S_k^{\leq v} f^j + S_k^{> v} f^j,
\end{equation} where $S_k^{\leq v} f^j$ is the restriction of the sum in $S_k f^j$ to $u_{m-k} \in \llbracket 1,v\rrbracket$. 
Given that $m-k$ is a long jump, the bounds \eqref{eq:umtildebig} hold again.
Hence, using that $F$ and $f$ are non-increasing,
we find that
\begin{equation} \label{eq:sameWithv}
\begin{aligned}
S_k^{\leq v} f^j & \leq \sum_{u_{m-k}=1}^{v} \frac{1}{u_{m-k} + v} \frac{1}{1-\hat \beta^2 \frac{\log (u_{m-k}+v) }{\log N}} f(u_{m-k})^j \mathbf{1}_{u_{m-k} + v\leq T}\\
&\leq \frac{1}{v} \frac{1}{1-\hat \beta^2} \sum_{u_{m-k}=1}^{v}  f(u_{m-k})^j\\
& \leq \frac{1}{v} \frac{1}{1-\hat \beta^2}  \left(f(1)^j
+ \int_{1}^v f(x)^j\dd x\right),
\end{aligned}
\end{equation}
by comparison to a integral (using that $f$ is non-increasing).
By integrating by part
and using that $f'(x)= -\frac{1}{x}\frac{1}{1-\hat \beta^2 \frac{\log (x)}{\log N}}$, we see that for all $j\geq 1$,
\begin{align*}
f(1)^j + \int_{1}^v f(x)^j\dd x &= vf(v)^j - j\int_{1}^v xf'(x) f(x)^{j-1}\dd x  \\
&\leq vf(v)^j + \frac{j}{1-\hat \beta^2} \int_{1}^v f(x)^{j-1}\dd x.
\end{align*}
If we iterate the integration by parts, we obtain that
\begin{align*}
 f(1)^j + \int_{1}^v f(x)^j\dd x \leq v \sum_{i=0}^j \frac{j!}{(j-i)!}\left(\frac{1}{1-\hat \beta^2 }\right)^i  f(v)^{j-i},
\end{align*}
and so
\begin{equation} \label{eq:IPP}
S_k^{\leq v} f^j \leq \sum_{i=0}^j \frac{j!}{(j-i)!}\left(\frac{1}{1-\hat \beta^2 }\right)^{i+1}   f(v)^{j-i}.
\end{equation}
On the other hand, 
we have
\begin{align*}
S_k^{> v} f^j& \leq \sum_{u_{m-k}=v+1}^T \frac{1}{u_{m-k}} \frac{1}{1-\hat \beta^2 \frac{\log (u_{m-k}) }{\log N}} f(u_{m-k})^j\\
&\leq \int_{v}^T \frac{1}{x} \frac{1}{1-\hat \beta^2 \frac{\log (x) }{\log N}} f(x)^j \dd x
 = \left[ - \frac{1}{j+1} f(x)^{j+1} \right]^{T}_v
 \leq  \frac{1}{j+1} f(v)^{j+1}.
\end{align*}
Combining the two previous estimates yields \eqref{eq:indReduction}.

\textbf{Case 3:} $m-k$ is a small jump. We have that $f(\sum_{i=\varphi(m-k)}^{m-k} u_i)\leq f(u_{m-k})$ as $f$ is non-increasing. Moreover, $\tilde u_{m-k} \geq \frac{u_{m-k-1}}{2}$ always holds. Hence, if we use the same decomposition as in \eqref{eq:decSk} with $v=u_{m-k-1}$, we find using that $F$ is non-increasing that
\begin{align*}
S_k^{\leq v} {f^j} & \leq \sum_{u_{m-k}=1}^{v} \frac{1}{u_{m-k} + v/2} \frac{1}{1-\hat \beta^2 \frac{\log (u_{m-k}+v/2) }{\log N}} f(u_{m-k})^j \mathbf{1}_{u_{m-k} + v/2\leq T}\\
& \leq \frac{2}{v} \frac{1}{1-\hat \beta^2}  \left({f(1)^j} + \int_{{1}}^v f(x)^j\dd x\right)\\
& \leq 2\sum_{i=0}^j \frac{j!}{(j-i)!}\left(\frac{1}{1-\hat \beta^2 }\right)^{i+1}   f(v)^{j-i},
\end{align*}
where we have used the integration by parts from Case 2 and that $f$ is non-increasing in the comparison to the integral.
Furthermore, we have $S_k^{> v} \leq \frac{1}{j+1} f(v)^{j+1}$ as in Case 2. Finally, since $m-k$ is not a long jump we have
$\varphi(m-k-1) = m-k-1$ and  therefore \eqref{eq:indReduction} follows.
\end{proof}

% \CC{We explain briefly how we use Lemma \ref{lem:indReduction} to prove Lemma \ref{lem:fibo}.
%  Suppose that $m$ is a fresh index. Then, by Lemma \ref{lem:indReduction} (Case 2), summing on $u_m$ in \eqref{eq:lemmaInduction} gives a linear combination of $f(u_{m-1}+\dots+u_{m-L})^{l}$, $l\in \{0,1\}$. 
%  Then, if $m-1$ is a long jump it is a good index (Case 1), so for each $l\in\{0,1\}$, summing over $u_{m-1}$ in \eqref{eq:lemmaInduction} gives a unique term $\frac{1}{l+1} f(u_{m-2}+\dots+u_{m-L})^{l+1}$. As long as we encounter long jumps, the quantity is transformed successively until we reach $\frac{1}{(l+L-1)!}f(u_{m-L})^{l+L-1}$. At that point a stopping time occur. Then, a fresh time is used to transform back to sums of the form $f(u_{m-k-L-1}+\dots+u_{m-k-2L})^j$. When a small jump is encountered, then the sum always reduces to some combination of $f(u_{m-k-1})^{j+1-l}, l\leq j+1$, which, contrarily to fresh indices is not of the form $f(u_{m-k-1}+\dots+u_{m-k-L})^{j+1-l}$. Note however that both fresh and small jumps induce nuisance terms corresponding to the terms $l< j+1$. Repeating this strategy and keeping track of the coefficients leads to Lemma \ref{lem:fibo}.}

\begin{proof}[Proof of Lemma \ref{lem:fibo}]
We prove the lemma by induction on $k$. The case $k=1$ follows from Lemma \ref{lem:indReduction} with $j=k=0$.

Assume now that \eqref{eq:lemmaInduction} holds for some $k\in \llbracket 1,m-2 \rrbracket$.
Then by \eqref{eq:indReduction} we obtain that the left hand side of \eqref{eq:lemmaInduction} for the index $k+1$ is smaller than the sum of all the entries of the following matrix, where we have set $\mu = 1-\hat \beta^2$  and $f=f(v)$ with $v= \sum_{i=\varphi(m-k-1)}^{m-k-1} u_i$:
\begin{equation} \label{eq:matrix}
\begin{pmatrix}
\frac{c_0^k}{(k+1)!}f^{k+1}  & \frac{2 \gamma_k^m c_0^k}{k!\mu} f^{k}  & \frac{2\gamma_k^mc_0^k }{(k-1)!\mu^2} f^{k-1}  & \cdots & \frac{2\gamma_k^mc_0^k}{\mu^{k}} f & \frac{2\gamma_k^mc_0^k}{\mu^{k+1}}\\
0 & \frac{c_1^k}{k! \mu} f^{k}  & \frac{2\gamma_k^m c_1^k}{(k-1)!\mu^2} f^{k-1}  & \dots & \frac{2\gamma_k^mc_1^k}{\mu^{k}} f  & \frac{2\gamma_k^mc_1^k}{\mu^{k+1}}\\
0&0 & \frac{c_2^k}{(k-1)!\mu^2} f^{k-1}  & \dots & \frac{2\gamma_k^mc_2^k}{\mu^{k}} f & \frac{2\gamma_k^mc_2^k }{\mu^{k+1}}\\
\vdots& \ddots & &\\
0& 0& 0&\dots & \frac{c_k^k}{\mu^{k}} f & \frac{2\gamma_k^mc_k^k }{\mu^{k+1}}
\end{pmatrix},
\end{equation}
and collecting the coefficients of the various terms $f^r$, $r\in \llbracket 0,k+1\rrbracket$, which amounts to summing over the columns of \eqref{eq:matrix}, gives \eqref{eq:lemmaInduction} for $k+1$. %(we bound the power of $2$ by its maximal power in each column).
\end{proof}

Recall \eqref{eq:boundAmntildeSansv}. Lemma \ref{lem:fibo} yields the following.
\begin{proposition} \label{prop:Amn} There  exists $C=C(\hat \beta)>0$ and $\varepsilon_N=\varepsilon(N,\hat \beta)\to 0$  as $N\to\infty$, such that
\begin{equation} \label{eq:upperBoundAmN2}
\begin{aligned}
&\tilde A_{m,N,\mathbf I} \leq  C (1+|\varepsilon_N|)^{m}    \sum_{i=0}^{m} \frac{ c_{i}^{m}}{(m-i)!}
 \times \left(\frac{\log N}{\hat \beta^2} \right)^{m-i} \frac{1}{\left(1-\hat \beta^2\right)^{i}} \lambda_{T,N}^{2(m-i)},
\end{aligned}
\end{equation}
where $\lambda_{T,N}$ is defined in \eqref{eq:lambdaT}. 
%\CC{(the two $m$'s were $m-1$ before)}.
\end{proposition}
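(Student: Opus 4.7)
My plan is to sum out the variables $u_2, \ldots, u_m$ by applying Lemma \ref{lem:fibo} with $k = m-1$, and then handle the remaining sum over $u_1$ (weighted by $p^\star_{2u_1}$) by a direct integral comparison.

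Specifically, Lemma \ref{lem:fibo} with $k = m-1$, combined with the observation that $\varphi(1) \leq 1$ (so that $\sum_{i = \varphi(1)}^{1} u_i = u_1$, with the convention $u_i = 0$ for $i \leq 0$), gives
\[
\sum_{\substack{u_i \in \llbracket 1,T\rrbracket \\ 2 \leq i \leq m}} \prod_{r=2}^m F(u_r + \tilde u_r)\,\mathbf{1}_{\sum_{i=1}^m u_i \leq T} \;\leq\; \sum_{i=0}^{m-1} \frac{c_i^{m-1}}{(m-1-i)!\,(1-\hat\beta^2)^i}\, f(u_1)^{m-1-i}.
\]
Plugging this into \eqref{eq:boundAmntildeSansv} and using $p^\star_{2u_1} \leq 1/(\pi u_1)$ from \eqref{eq:pnstar}, the proof reduces to bounding $\sum_{u=1}^T u^{-1} f(u)^{m-1-i}$ for each $i \in \llbracket 0, m-1 \rrbracket$.

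For this residual sum, I will use the identity $-f'(u) = u^{-1}(1-\hat\beta^2 \log u/\log N)^{-1} \geq u^{-1}$. Comparing the non-increasing sum to its integral and absorbing the leading boundary term gives
\[
\sum_{u=1}^T \frac{f(u)^{m-1-i}}{u} \;\leq\; f(1)^{m-1-i} + \int_1^T (-f'(u))\,f(u)^{m-1-i}\,du \;\leq\; f(1)^{m-1-i} + \frac{f(1)^{m-i}}{m-i},
\]
where the integral is explicit and $f(T) = 0$. Since $f(1) = (\log N/\hat\beta^2)\,\lambda_{T,N}^2$, this produces two kinds of contributions to $\tilde A_{m,N,\mathbf I}$: an \emph{integral} part of the form $\sum_i c_i^{m-1}(m-i)!^{-1}(1-\hat\beta^2)^{-i} f(1)^{m-i}$, and a \emph{boundary} part of the form $\sum_i c_i^{m-1}(m-1-i)!^{-1}(1-\hat\beta^2)^{-i} f(1)^{m-1-i}$.

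The integral part matches the target \eqref{eq:upperBoundAmN2} directly after noting $c_i^{m-1} \leq c_i^m$ (immediate from the recursion). For the boundary part, re-indexing $j = i+1$ casts it in the target shape with $c_{j-1}^{m-1}$ in place of $c_j^m$ and an extra factor $(1-\hat\beta^2)$ absorbable into $C(\hat\beta)$. The only point that requires a small argument is the monotonicity $c_{j-1}^{m-1} \leq c_j^m$: if $\gamma_{m-1}^m = 1$, the recursion $c_j^m = c_j^{m-1} + 2\gamma_{m-1}^m \sum_{k < j} c_k^{m-1}$ already contains the term $2c_{j-1}^{m-1}$; if $\gamma_{m-1}^m = 0$, then $c_j^m = c_j^{m-1}$ and one concludes by the monotonicity of $j \mapsto c_j^k$, itself a direct induction on $k$ from the recursion. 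Absorbing the prefactor $1/(\pi(1-\hat\beta^2))$ and the $(1+\varepsilon_N)$ factors into the constant $C(\hat\beta)$ and the $(1+|\varepsilon_N|)^m$ appearing on the right-hand side of \eqref{eq:upperBoundAmN2} completes the argument.
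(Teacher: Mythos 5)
Your proof is correct and follows essentially the same route as the paper: apply Lemma \ref{lem:fibo} with $k=m-1$, then handle the residual sum over $u_1$ by the integral comparison $\sum_{u=1}^T u^{-1} f(u)^j \leq f(1)^j + f(1)^{j+1}/(j+1)$. The only difference is bookkeeping at the end: you bound the ``integral'' and ``boundary'' contributions separately via $c_i^{m-1}\leq c_i^m$ and $c_{j-1}^{m-1}\leq c_j^m$, whereas the paper collapses both into the single inequality $c_i^{m-1}+c_{i-1}^{m-1}\mathbf{1}_{i\geq 1}\leq c_i^m$ after noting that $\gamma_{m-1}^m=1$ always holds (index $1$ is always a small jump, since $\bar k_1=0$), which also makes your case $\gamma_{m-1}^m=0$ vacuous.
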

\begin{proof}[Proof of Proposition \ref{prop:Amn}]
By Proposition \ref{prop:boundAmNtildeBis} and Lemma \ref{lem:fibo} applied to $k=m-1$, we have:
\begin{align*}
\tilde A_{m,N} & \leq \frac{1}{1-\hat \beta^2} (1+\varepsilon_N)^{m}   \sum_{u_1=1}^T \frac{C}{u_1} \sum_{i=0}^{m-1} \frac{ c_i^{m-1}}{(m-1-i)!} 
 f(u_1)^{m-1-i} \frac{1}{\left(1-\hat \beta^2\right)^{i}} \\
& \leq \frac{C}{1-\hat \beta^2} (1+\varepsilon_N')^{m}  \sum_{i=0}^{m} \frac{ c_i^{m-1}+c_{i-1}^{m-1}\mathbf{1}_{i\geq 1}}{(m-i)!}  
 f(1)^{m-i} 
 \frac{1}{\left(1-\hat \beta^2\right)^{i}} ,
\end{align*}
where in the second inequality, we have used that
\begin{align*}\sum_{u_1=1}^T \frac{1}{u_1} f(u_1)^{m-i}
\leq \sum_{u_1=1}^T  F(u_1) f(u_1)^{m-i}&\leq f(1)^{m-i} + \int_{1}^T  F(u_1) f(u_1)^{m-i}\dd u_1 \\
&= f(1)^{m-i} + \frac{f(1)^{m-i+1}}{m-i+1},
\end{align*}
using that $F(u)f(u)$ is non-increasing in the comparison to the integral.
 This yields  \eqref{eq:upperBoundAmN2} since 
 \[c_i^{m-1}+c_{i-1}^{m-1}\mathbf{1}_{i\geq 1}\leq c_{i}^{m-1} +2\gamma_{m-1}^m 
 \sum_{j=0}^{i-1} c_{j}^{m-1} = c_{i}^m,\] as $\gamma_{m-1}^m=1$ since the index $1$ is always bad (it is a small jump).
\end{proof}

\begin{lemma} \label{lem:ci} For all $\mathbf I\in \mathcal D(m,q)$, for all $k\leq m$:
\begin{equation} \label{eq:inductioncik}
\forall i\leq k, \quad  c_i^k \leq 3^i \prod_{r=1}^{k-1} (1+\gamma_r^m).
\end{equation}
\end{lemma}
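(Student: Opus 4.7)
The plan is straightforward induction on $k$, exploiting the explicit recursion for $c_i^k$ together with the geometric identity $\sum_{j=0}^{i-1} 3^j = (3^i-1)/2$.

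For the base case $k=1$, the empty product equals $1$, so the claim reduces to $c_0^1 = 1 \le 1$ and $c_1^1 = 2 \le 3$, both of which hold by definition.

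For the inductive step, assume \eqref{eq:inductioncik} holds at level $k$, and split according to the value of $\gamma_k^m \in \{0,1\}$. If $\gamma_k^m = 0$, then by the recursion $c_i^{k+1} = c_i^k$ for $i\le k$ and $c_{k+1}^{k+1}=0$, while $\prod_{r=1}^k (1+\gamma_r^m) = \prod_{r=1}^{k-1}(1+\gamma_r^m)$, so the bound is preserved. If $\gamma_k^m = 1$, then for any $i\le k+1$,
\begin{align*}
c_i^{k+1} &= c_i^k + 2\sum_{j=0}^{i-1} c_j^k \;\le\; \Bigl(3^i + 2\sum_{j=0}^{i-1} 3^j\Bigr) \prod_{r=1}^{k-1}(1+\gamma_r^m) \\
&= (2\cdot 3^i - 1)\prod_{r=1}^{k-1}(1+\gamma_r^m) \;\le\; 3^i \cdot 2 \prod_{r=1}^{k-1}(1+\gamma_r^m) \;=\; 3^i \prod_{r=1}^{k}(1+\gamma_r^m),
\end{align*}
where in the middle equality I used $\sum_{j=0}^{i-1} 3^j = (3^i-1)/2$, and in the last equality the fact that $1+\gamma_k^m = 2$. (For $i=k+1$ the convention $c_{k+1}^k = 0$ only strengthens the inequality.) This closes the induction.

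There is no real obstacle here: the only delicate point is checking that the factor $2$ coming from $(1+\gamma_k^m)$ when $\gamma_k^m=1$ is exactly enough to absorb the $2\sum_{j<i} 3^j = 3^i - 1$ contribution from the recursion, which it is. The lemma will be used later to control the combinatorial weight of bad indices, which is why the $3^i$ per bad summand and the multiplicative $\prod (1+\gamma_r^m)$ structure are the natural shape to aim for.
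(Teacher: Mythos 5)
Your proof is correct and follows essentially the same approach as the paper's: induction on $k$, using the recursion for $c_i^{k+1}$ together with the geometric sum $\sum_{j=0}^{i-1}3^j = (3^i-1)/2$ and the observation that $1+\gamma_k^m$ supplies exactly the factor needed when $\gamma_k^m=1$. The only cosmetic difference is that you split into the two cases $\gamma_k^m\in\{0,1\}$, while the paper carries $\gamma_k^m$ symbolically through a single chain of inequalities.
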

\begin{proof}
We prove it by induction on $k$. The estimate holds for $k=1$ since $c_0^1=1$ and $c_1^1=2$. 
Suppose that \eqref{eq:inductioncik} holds for some $k\leq m-1$. Then, for all $i\leq k+1$,
\[c_{i}^{k+1} = c_{i}^k +2 \gamma_{k}^m \sum_{j=0}^{i-1} c_{j}^k \leq \prod_{r=1}^{k-1} (1+\gamma_r^m) \left(3^i  + 2 \gamma_{k}^m  \sum_{j=0}^{i-1} 3^j\right) \leq 3^{i} \prod_{r=1}^{k} (1+\gamma_r^m).\]
\end{proof}

\subsection{Proof of Theorem \ref{th:momentwithoutTriple}}
By Proposition \ref{prop:decInChaos}, it is enough to show that
%when $q\to\infty$ or when $q(N)=q_0$ is a constant,}
\begin{equation} \label{eq:enoughTo}
  \sup_{X\in (\mathbb Z^2)^q} \Psi_{N,q}(X) \leq  e^{\lambda_{T,N}^2 \binom{q}{2} + cq^{3/2}+ o(1)q^2},
\end{equation}
for some $c=c(\hat \beta)$.
%\sout{when $q\to\infty$ and $c=c(\hat \beta,q_0)$ when $q(N)=q_0$ is a constant.}
Using Proposition \ref{prop:boundAmNtildeBis}, we have
\begin{equation} \label{eq:preFinalBoundPsi}
  \sup_{X\in (\mathbb Z^2)^q} \Psi_{N,q}(X) \leq \sum_{m=0}^\infty \sigma_N^{2m} \left(\frac{1}{\pi}\right)^{m-1} \sum_{\mathbf I\in \mathcal D(m,q)} \tilde{A}_{m,N,\mathbf I},
\end{equation}
where \eqref{eq:upperBoundAmN2} gives an upper bound on the $\tilde A_{m,N,\mathbf I}$. Observe that by \eqref{eq:inductioncik}, we have
\[c_{i}^{m} \leq 3^i 2^{\sum_{i=1}^{m-1} \mathbf{1}_{i \text{ is bad}}} 
\leq 3^i 2^{\sum_{i=1}^{m} \mathbf{1}_{i \text{ is bad}}}\leq 3^i 2^{2n(\mathbf I) + m/L+1},\]
where $n(\mathbf I)$ is the number of small jumps in $\mathbf I$.
Indeed, an index $r$ is bad if it is a small jump or a fresh index. 
The number of small jumps is $n(\mathbf I)$.
A fresh index is either equal to $m$, or a stopping index, or an index adjacent to a small jump, so the number of fresh indices is at most $1+n(\mathbf I)$ plus the number of stopping indices. Since stopping indices are spaced at least $L$ steps apart, 
there are at most $m/L$ stopping indices. 
Hence there are at most $2n(\mathbf I) + m/L+1$
bad indices. For a fixed $n\leq m$, let us compute the number of diagrams in $\mathcal D(m,q)$ such that $n(I)=n$. One has first to choose the location of the 
small jumps, which gives $\binom{m}{n}$ possibilities. Now if $m$ is a small jump $(m-{\bar k_m}\leq L+2)$, it means that at least one of the two particles $\{i_m,j_m\}$ is the same as one of the particles $\{i_{m-L-2},j_{m-L-2},\dots,i_{m-1},j_{m-1}\}$, therefore there are at most $2(L+2)q$ choices for the couple $(i_m,j_m)$. On the other hand, if $\{i_m,j_m\}$ is a long jump, there are at most $\binom{q}{2}$ possibilities. By repeating the argument, we finally find that the number of diagrams in $\mathcal D(m,q)$ such that $n(I)=n$ is less than $\binom{m}{n} (2(L+2)q)^n \binom{q}{2}^{m-n}$.

Hence, by \eqref{eq:preFinalBoundPsi}, Proposition \ref{prop:Amn}, Lemma \ref{lem:ci} and \eqref{eq:LLTsigmaN},
there exists $\varepsilon_N \searrow 0$ such that
\begin{align*}
&\sup_{X\in (\mathbb Z^2)^q} \Psi_{N,q}(X) \leq C(\hat \beta)\times\\
& \sum_{m=0}^\infty  (1+\varepsilon_N)^{m} \sum_{n=0}^m \binom{m}{n} (2(L+2)q)^n \binom{q}{2}^{m-n}  \sum_{i=0}^{m} \frac{3^i 2^{2n + m/L+1} }{(m-i)!} \left(\frac{\hat \beta^2}{\log N} \right)^{i}  \frac{\lambda_{T,N}^{2(m-i)}}{\left(1-\hat \beta^2\right)^{i}}.
\end{align*}
The sum over $n$ gives a factor of $(8(L+2)q + \binom{q}{2})^m$. Exchanging the sum in $i$ and $m$ entails
\begin{align*}
\sup_{X\in (\mathbb Z^2)^q} \Psi_{N,q}(X) &\leq C\sum_{i=0}^{\infty} (1+\varepsilon_N)^{i} 3^i 2^{i/L} \left(\frac{\hat \beta^2}{\log N} \right)^{i}   \left(8(L+2)q + \binom{q}{2}\right)^i \frac{1}{\left(1-\hat \beta^2\right)^{i}} \times  \\
& \hspace{10mm} \sum_{m=i}^\infty  (1+\varepsilon_N)^{m-i}\times \left(8(L+2)q + \binom{q}{2}\right)^{m-i} \frac{2^{ (m-i)/L} }{(m-i)!}  \lambda_{T,N}^{2(m-i)}.
\end{align*}
So if we assume that
\begin{equation} \label{eq:conditionOnr}
r = 3(1+\varepsilon_N) \frac{2^{1/L}}{\left(1-\hat \beta^2\right)}  \left(\frac{\hat \beta^2}{\log N} \right)   \left(8(L+2)q + \binom{q}{2}\right) < 1,
\end{equation}
we obtain the bound:
\begin{equation}\label{eq:finalBound}
\sup_{X\in (\mathbb Z^2)^q} \Psi_{N,q}(X) \leq \frac{C}{1-r} e^{(1+\varepsilon_N)(8(L+2)q + \binom{q}{2})2^{1/L}\lambda_{T,N}^2}.
\end{equation}
Taking $L=\lceil \sqrt{q} \rceil$ then gives, together with  \eqref{eq:condition20nr},
the desired conclusion \eqref{eq:enoughTo}, since $2^{1/L}\leq 1+2/L$ for 
$L\geq 1$.
%some constant $C$.
\iffalse
If $q(N)=q_0\in \mathbb N$ is constant, we can let $L=3$.
Then, the condition \eqref{eq:conditionOnr} is trivially satisfied since  $r\to 0$ as $N\to\infty$ for any fixed $\hat \beta <1$.
 Hence \eqref{eq:enoughTo} holds in this case.  (In fact, when $q$ is constant, it is not necessary to introduce the distinction between good and bad indices, as one can treat every index as a bad index in the induction (Lemma \ref{lem:fibo} and Lemma \ref{lem:indReduction}) and still arrive to \eqref{eq:enoughTo} with the same following arguments).

If $q=q(N)\to\infty$, then we can choose any $L\to\infty$ such that
$L=o(q)$, so that together
with \eqref{eq:condition20nr}, \eqref{eq:finalBound} holds and implies
the estimate \eqref{eq:enoughTo}.
\fi
\qed

\section{Discussion and concluding remarks}
\label{sec-discussion}
We collect in this section several comments concerning the results of this paper.

\begin{enumerate}
  \item
    Our results allow one already to obtain some estimates
  on the maximum of $Y_N(x):= \log W_N(\b_N,x)$ over subsets $D\subset [0,1]^2$.
  Specifically, let $\gamma>0$ be given and define
  $Y_N^*=\sup_{x\in D}
  %\llbracket 0,N^{\gamma}\rrbracket^2} 
  Y_N(x)$.
  %, where $|D|=N^{2\gamma}$ is a subset of $[0,\sqrt{N}]^2$.
By  Chebyshev's inequality we have that
\begin{align*}&\IP\left( Y_N^* \geq \delta \sqrt {\log N} \right)  \leq 2 N^{2\gamma} \IP\left(Y_N(0) \geq \delta \sqrt {\log N} \right)\\
& \leq 2 N^{2\gamma}  \IE[W_N^q] e^{-q \delta\sqrt{\log N}}
\leq N^{2\gamma +\frac{q^2\lambda^2}{{2 \log N}}-\frac{q\delta}{\sqrt{\log N}}+o(1)},
\end{align*}
where we used \eqref{eq:estimate} in the last inequality. The optimal $q$ (disregarding the constraint in \eqref{eq:condition20nr})
is $q/\sqrt{\log N}=\delta/\lambda^2 $,
and for that value the right side of the last display decays to $0$
if $\delta^2>4\gamma \lambda^2$. The condition on $q$ in \eqref{eq:condition20nr}
then gives the constraint that
$\gamma< \frac{1}{6} \lambda^2 \frac{1-\hat \beta^2}{\hat \beta^2}$, which for $\hat \beta$ small reduces to $\gamma < 1/6$.
We however do not expect
that this Chebyshev based bound is tight, 
even if one uses the optimal $q$ disregarding
\eqref{eq:condition20nr}; this is due to inherent inhomogeneity in time of
the model. We hope to return to this point in future work.
\item In view of the last sentence in Remark
  \ref{rem-1.3}, it is of interest to obtain a lower bound
  on $\IE [W_N^q]$ that matches
  the upper bound, that is, $\IE[W_N^q]\geq e^{\binom 2 q \lambda^2(1-\varepsilon_N)}$. This can be found in \cite{CZLB}.
  %is the topic of work in progress that will be reported elsewhere.

\end{enumerate}
\bibliographystyle{plain}
%{\footnotesize 

%\bibliography{polymeres-bib}
%}
\appendix
\section{Proof of \eqref{eq:pnstar}}
\label{sec-pnstar}
\iffalse
\begin{lemma} \label{lem:pnstardecreases}
The sequence $(p_{n}^\star)_{n\geq 0}$ is decreasing.
\end{lemma}
\begin{proof}
For all $x\in (\mathbb Z)^2$, we have
\[p_{n+1}(x) = \sum_{x\in (\mathbb Z)^2} p_n(x-y) p_1(y)\leq p_n^\star,\]
so that $p_{n+1}^\star \leq p_n^\star$.
\end{proof}
\begin{lemma} The following holds:
\begin{equation}\label{eq:pnstar}
\forall n\geq 1: \quad p_{n}^\star \leq \frac{2}{\pi} \frac{1}{n} .
\end{equation}
\end{lemma}
\begin{proof}
Here is the argument
(references suggested by Amir).

\fi
First note that
$p_{2n}^\star\leq p_{2n}(0)$ since,
by the Cauchy-Schwarz inequality,
\[p_{2n}(x)=\sum_{y} p_n(x-y) p_n(y) \leq \sum_y p_n(y)^2 = p_{2n}(0).\]

Let $p^{(d)}_{2n}$ be the return probability of $d$-dimensional SRW to $0$. A direct computation gives
%It can be checked by hand that
that $p^{(2)}_{2n}=(p^{(1)}_{2n})^2$
(see e.g. \cite[Page 184]{Durrett}).
%Durrett, third edition, Ch. 3, pg 184).
We will show that $a_n=\sqrt{2n} p^{(1)}_{2n}$ is increasing. We have,
\[ a_n =\sqrt{2n} 2^{-2n}
\begin{pmatrix}
2n\\
n
\end{pmatrix}.\] Hence,
\begin{eqnarray*}
  \frac{a_{n+1}}{a_n}& = &\frac14
  \sqrt{\frac{n+1}{n}} \frac{ (2n+2)(2n+1)}{(n+1)^2}\\
  &=&
  \sqrt{1/(n(n+1))}  (n+ (n+1))/2.
\end{eqnarray*}
Since $(a+b)/2\geq \sqrt {ab}$, we conclude (using $a=n$ and $b=n+1$) that
$a_{n+1}/a_n\geq 1$.

Let $p_{2n+1}^{(1)}$ be the  probability of the $1$-dimensional SRW to come back to 1 in $2n+1$ steps. By the random walk representation
\cite[Remark in  Pg. 185]{Durrett},
%of the remark in p.\ 185 of Durrett, it is straighforward that
we have that
$p_{2n+1}^\star \leq (p^{(1)}_{2n+1})^2$. A similar line of
argument to the above shows that $b_n=\sqrt{2n+1}p_{2n+1}^{(1)}$ is increasing in $n$. Indeed,
\begin{align*}
\frac{b_{n+1}}{b_n} & = \frac{1}{4} \frac{\sqrt{2n+3}}{\sqrt{2n+1}} \frac{(2n+3)(2n+2)}{(n+2)(n+1)}\\
& =  \frac{2n+3}{2\sqrt{(n+1)(n+2)}} \frac{\sqrt{(2n+3)(n+1)}}{\sqrt{(2n+1)(n+2)}},
\end{align*}
where the first fraction is bigger than 1 by the formula $(a+b)/2\geq \sqrt{ab}$, as well as the second fraction by expanding the products.

Now, we know from the local limit theorem  that $a_n$ and $b_n$ converge to $2/\sqrt{2\pi}$, thus they are always smaller than this limit.  This leads to \eqref{eq:pnstar}.
\qed
\section{Improved estimates on $U_N$}
\label{app-UN}
When $n$ is taken large enough, the estimate \eqref{eq:boundP2P2ndMomentC} can be improved as follows.
\begin{proposition} \label{prop:factor2ndMomentbis}
There exists $\varepsilon_{n}=\varepsilon(n,\hat \beta)\to 0$ such that as $n\to\infty$, uniformly for $N\geq n$,
\begin{equation} \label{eq:boundP2P2ndMoment}
 U_N(n) =  (1+\varepsilon_{n})\frac{\hat \beta^2}{\left(1-\hat \beta^2 \frac{\log n}{\log N}\right)^2} \frac{1}{ n \log N} .
\end{equation}
\end{proposition}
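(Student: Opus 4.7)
The plan is to upgrade the one-sided bound \eqref{eq:boundP2P2ndMomentC} into a precise asymptotic by exploiting the renewal representation
\[
U_N(n) = \sum_{k=1}^\infty (\sigma_N^2 R_N)^k \, \mathbb{P}\big(\tau_k^{(N)} = n\big),
\]
already set up in the proof of Proposition \ref{prop:factor2ndMoment}, where $\tau_k^{(N)} = T_1^{(N)} + \cdots + T_k^{(N)}$ and $\mathbb{P}(T_i^{(N)} = m) = p_{2m}(0)/R_N$ on $\llbracket 1,N\rrbracket$. The key input is to replace the one-sided estimate \eqref{eq:dickman}, borrowed from \cite{CaravennaFrancesco2019TDsr}, by its sharp asymptotic counterpart (also available in loc.\ cit.): as $n\to\infty$, uniformly in $k$ over a suitable range,
\[
\mathbb{P}\big(\tau_k^{(N)} = n\big) = (1+o(1))\, k\, \mathbb{P}(T_1^{(N)} = n)\, \mathbb{P}(T_1^{(N)} \leq n)^{k-1} = (1+o(1))\, k\, \frac{p_{2n}(0)}{R_N}\,\left(\frac{R_n}{R_N}\right)^{k-1}.
\]

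Plugging this into the renewal sum and using $\sum_{k\geq 1} k\, a^{k-1} = (1-a)^{-2}$ with $a = \sigma_N^2 R_n$ yields
\[
U_N(n) = (1+o(1))\, \sigma_N^2\, p_{2n}(0)\, \frac{1}{(1-\sigma_N^2 R_n)^2}.
\]
The conclusion \eqref{eq:boundP2P2ndMoment} follows from the local CLT $p_{2n}(0) = (1+o(1))/(\pi n)$, the asymptotic $R_n = (1+o(1))\log n/\pi$ from \eqref{eq-RNas}, and $\sigma_N^2 = (1+o(1))\pi\hat\beta^2/\log N$ from \eqref{eq:LLTsigmaN}. Note that the hypotheses $\hat\beta < 1$ and $n\leq N$ give $\sigma_N^2 R_n \leq \hat\beta^2 + o(1) < 1$, so the factor $(1-\sigma_N^2 R_n)^{-2}$ stays bounded by $(1-\hat\beta^2)^{-2}(1+o(1))$, and the multiplicative $(1+o(1))$ correction is genuinely negligible compared to the main term, uniformly in $N\geq n$.

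The main obstacle is justifying the uniformity in $k$ of the sharp renewal asymptotic, since the local error in the one-big-jump approximation depends on $k$. I would address this by truncating the series at some $k_\star = k_\star(n)$ tending slowly to infinity: for $k\leq k_\star$ one uses the sharp asymptotic with a uniform $o(1)$ error, while for $k > k_\star$ one controls the tail directly via the inequality \eqref{eq:dickman}, which yields a bound proportional to $\sigma_N^2 p_{2n}(0)\,(\sigma_N^2 R_n)^{k_\star}(1-\sigma_N^2 R_n)^{-2}$. Because $\sigma_N^2 R_n$ is bounded away from $1$ by $1-\hat\beta^2$ uniformly in $N\geq n$, this tail decays geometrically in $k_\star$ and is $o(1)$ relative to the main term. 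Combining the two contributions gives \eqref{eq:boundP2P2ndMoment}.
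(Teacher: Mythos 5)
Your proposal takes a genuinely different route from the paper, and it contains a real gap at the crucial step.

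The paper's own proof does not touch the renewal representation at all. Instead, it works directly with $U_N(n+1)=\sigma_N^2\,\DE_0\bigl[e^{\beta_N^2\sum_{i=1}^n\mathbf{1}_{S_{2i}=0}}\mathbf{1}_{S_{2n}=0}\bigr]$ and shows in three steps that the local-time sum in the exponent can be restricted to the first and last $\ell_n=n^{1-\varepsilon_n}$ indices, that the resulting expectation factorizes asymptotically, and that the end factor peels off a $p_{2n}(0)$. One is then left with $\DE_0[e^{\beta_N^2\sum_{i\le\ell}\mathbf{1}_{S_{2i}=0}}]^2\,p_{2n}(0)$, and the sharp second-moment asymptotic \eqref{eq:asympt2ndMoment} finishes the job. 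This argument is elementary and sidesteps any fine renewal estimate.

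Your route requires upgrading the one-sided bound \eqref{eq:dickman} into a two-sided asymptotic
$\DP(\tau_k^{(N)}=n)=(1+o(1))\,k\,\DP(T_1^{(N)}=n)\,\DP(T_1^{(N)}\le n)^{k-1}$
holding uniformly in $N\ge n$ and in $k$ up to some $k_\star(n)\to\infty$. You write that this is \emph{also available in loc.\ cit.}, i.e.\ in \cite{CaravennaFrancesco2019TDsr}, but that paper's Proposition~1.5 gives only an upper bound with an unspecified constant $C$; no matching lower bound with the same leading constant, let alone a $(1+o(1))$ equivalence uniform in $k$ and in the $N$-dependent renewal law, is stated there. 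Even a two-sided bound with constants would give only $U_N(n)\asymp\hat\beta^2\bigl(1-\hat\beta^2\tfrac{\log n}{\log N}\bigr)^{-2}(n\log N)^{-1}$, which falls short of the asserted asymptotic equality. What you actually need is a precise local one-big-jump principle for this truncated, $N$-dependent renewal, and that is a nontrivial result that would have to be proved rather than cited. So the proposal as written has a genuine gap: the existence of the sharp asymptotic for $\DP(\tau_k^{(N)}=n)$ is asserted but not justified, and without it the truncation-at-$k_\star$ argument only isolates the range where you cannot proceed, it does not cure it. If you want to carry this route through, the model to follow is Alexander-type local limit theorems for heavy-tailed renewals, adapted to the $N$-truncated case, and you should expect that adaptation to be the bulk of the work rather than a citation.
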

\begin{proof}
Since $(S_n^1-S_n^2)\eqlaw (S_{2n})$, we can write
\begin{equation} \label{eq:UNinSRW}
U_N(n+1) =
\sigma_N^2 \DE_{0}\left[ e^{ \b_N^2\sum_{i=1}^{n} \mathbf 1_{S_{2i} = 0}} \mathbf{1}_{S^1_{2n}= 0} \right].
\end{equation}
Consider $\ell =\ell_n = n^{1-\varepsilon_n}$ with $\varepsilon_n = \frac{1}{\log \log n}$, so that $\ell_n=o(n)$ and $\varepsilon_n\to 0$.

\textbf{First step:} As $n\to\infty$ with $n\leq N$,
\begin{equation} \label{eq:firstStep}
\DE_{0}\left[ e^{ \b_N^2\sum_{i=1}^{n} \mathbf 1_{S_{2i} = 0}} \mathbf{1}_{S_{2n}= 0} \right] \sim
\DE_{0}\left[ e^{ \b_N^2(\sum_{i=1}^{\ell} \mathbf 1_{S_{2i} = 0}+ \sum_{i=n-\ell}^{n} \mathbf 1_{S_{2i} = 0})}  \mathbf{1}_{S_{2n}= 0 } \right].
\end{equation}
We compute the norm of the difference which, using that $|e^{-x}-1|\leq |x|$ for $x\geq 0$, is less than
\begin{align*}
&\DE_{0}\left[ e^{ \b_N^2\sum_{i=1}^{n} \mathbf 1_{S_{2i} = 0}} \mathbf{1}_{S_{2n}= 0} \times \b_N^2 \sum_{j=\ell}^{n-\ell} \mathbf 1_{S_{2j} = 0} \right]\\
&= \b_N^2 \sum_{j=\ell}^{n-\ell} \DE_{0}\left[ e^{ \b_N^2\sum_{i=1}^{j} \mathbf 1_{S_{2i} = 0}}  \mathbf 1_{S_{2j} = 0}\right]   \DE_{0}\left[ e^{ \b_N^2\sum_{i=1}^{n-j} \mathbf 1_{S_{2i} = 0}}  \mathbf 1_{S_{2(n-j)} = 0} \right].
\end{align*}
where we have used Markov's property in the second line. By \eqref{eq:UNinSRW} and \eqref{eq:boundP2P2ndMomentC}, the last sum is smaller than
\begin{align*} C\b_N^2 \sum_{j=\ell}^{n-\ell} \frac{1}{j}   \frac{1}{n-j} \leq 2C\b_N^2 \sum_{j=\ell}^{n/2} \frac{1}{j}   \frac{1}{n/2} &\leq \frac{1}{n}C'\beta_N^2 \log\left(\frac{n}{\ell_n}\right)
\leq \frac{1}{n}C''\varepsilon_n =o(n^{-1}).
\end{align*}
Since the left hand side of \eqref{eq:firstStep} is bigger than $c n^{-1}$ for some constant $c>0$, this shows \eqref{eq:firstStep}.

\textbf{Second step:} As $n\to\infty$ with $n\leq N$,
\begin{equation} \label{eq:2ndStep}
\begin{aligned}
&\DE_{0}\left[ e^{ \b_N^2(\sum_{i=1}^{\ell} \mathbf 1_{S_{2i} = 0}+ \sum_{i=n-\ell}^{n} \mathbf 1_{S_{2i} = 0})}  \mathbf{1}_{S_{2n}= 0 } \right] \\
&\sim \DE_{0}\left[ e^{ \b_N^2\sum_{i=1}^{\ell} \mathbf{1}_{S_{2i}=0}} \right] \DE_{0}\left[ e^{ \b_N^2\sum_{i=n-\ell}^{n} \mathbf 1_{S_{2i} = 0}}\mathbf 1_{S_{2n} = 0} \right].
\end{aligned}
\end{equation}
By Markov's property, we can write the LHS of \eqref{eq:2ndStep} as
\begin{align*}
& \sum_{x\in \mathbb Z^2} \DE_{0}\left[ e^{ \b_N^2\sum_{i=1}^{\ell} \mathbf 1_{S_{2i} = 0}} \mathbf{1}_{S_{2\ell} = x} \DE_x \left[e^{\b_N^2  \sum_{i=n-2\ell}^{n-\ell} \mathbf 1_{S_{2i} = 0}}  \mathbf{1}_{S_{2{n-\ell}}= 0 }\right]\right]\\
&  = \sum_{x\in \mathbb Z^2} \DE_{0}\left[ e^{ \b_N^2\sum_{i=1}^{\ell} \mathbf 1_{S_{2i} = 0}} \mathbf{1}_{S_{2\ell} = x} \DE_0 \left[e^{\b_N^2  \sum_{i=1}^{\ell} \mathbf 1_{S_{2i} = 0}}   \mathbf{1}_{S_{2n-\ell}=x} \right]\right].
\end{align*}
Therefore the difference in \eqref{eq:2ndStep} writes $\sum_{x\in \mathbb Z^2} \Delta_x$ with
\begin{align*}
& \Delta_x := \DE_{0}\left[ e^{ \b_N^2\sum_{i=1}^{\ell} \mathbf 1_{S_{2i} = 0}} \mathbf{1}_{S_{2\ell} = x}\DE_0 \left[e^{\b_N^2  \sum_{i=1}^{\ell} \mathbf 1_{S_{2i} = 0}}  \left(\mathbf{1}_{S_{2{n-\ell}}= 0 } - \mathbf{1}_{S_{2{n-\ell}}= x }\right)\right]\right].
\end{align*}
Since $\DE_0 \left[e^{\b_N^2  \sum_{i=1}^{\ell} \mathbf 1_{S_{2i} = 0}} \right] \leq C(\hat \b)$ by  \eqref{eq:bound2ndMoment}, we have
\[
\sum_{|x| > \sqrt{\ell} n^{\varepsilon/4}} |\Delta_x| \leq  C\sum_{|x| > \sqrt{\ell} n^{\varepsilon/4}} \DE_{0}\left[ e^{ \b_N^2\sum_{i=1}^{\ell} \mathbf 1_{S_{2i} = 0}} \mathbf{1}_{S_{2\ell} = x}\right].
\]
By H\"older's inequality with $p^{-1}+q^{-1}=1$, and $p$ small enough so that $\sqrt p \hat{\beta} <1$,
\begin{align*}
\DE_{0}\left[ e^{ \b_N^2\sum_{i=1}^{\ell} \mathbf 1_{S_{2i} = 0}} \mathbf{1}_{S_{2\ell} = x}\right] &\leq \DE_{0}\left[ e^{ p\b_N^2\sum_{i=1}^{\ell} \mathbf 1_{S_{2i} = 0}}\right]^{\frac{1}{p}} p_{2\ell}(x)^{\frac 1 q}
\leq C(\hat \beta) \ell_n^{-1}e^{-\frac{1}{2q} \frac{|x|^2}{\ell_n}},
\end{align*}
for $n$ large enough. Therefore,
\begin{align*}
\sum_{|x| > \sqrt{\ell} n^{\varepsilon/4}} |\Delta_x| &\leq C \sum_{|x| > \sqrt{\ell} n^{\varepsilon/4}}  \ell_n^{-1}e^{-\frac{1}{2q} \frac{|x|^2}{\ell_n}},
 \leq C e^{-\frac{1}{2q} n^{\varepsilon/2}} = o(n^{-1}).
\end{align*}
We now estimate the sum on $\Delta_x$ for $|x| \leq \sqrt{\ell} n^{\varepsilon/4}$. We start by bounding the expectation inside the definition of $\Delta_x$:
\begin{equation} \label{eq:insideDelta}
\begin{aligned}
&\DE_0 \left[e^{\b_N^2  \sum_{i=1}^{\ell} \mathbf 1_{S_{2i} = 0}}  \left(\mathbf{1}_{S_{2{n-\ell}}= 0 } - \mathbf{1}_{S_{2{n-\ell}}= x }\right)\right]\\
&= \sum_{y\in \mathbb Z^2} \DE_0 \left[e^{\b_N^2  \sum_{i=1}^{\ell} \mathbf 1_{S_{2i} = 0}} \mathbf{1}_{S_{\ell}= y} \right]\left(p_{2n-2\ell}(y)-p_{2n-2\ell}(y-x)\right).
\end{aligned}
\end{equation}
By the same argument as above, we can prove that the above sum restricted to $|y|\geq \sqrt \ell n^{\varepsilon/4}$ is negligible with respect to $n^{-1}$, uniformly for $|x| \leq \sqrt{\ell} n^{\varepsilon/4}$. On the other hand, by the local limit theorem we have
\[
\sup_{|x|\leq \sqrt \ell n^{\varepsilon/4},|y|\leq \sqrt \ell n^{\varepsilon/4}} \left|p_{2n-2\ell}(y)-p_{2n-2\ell}(y-x)\right| = o(n^{-1}).
\]
since $\ell_n n^{\varepsilon/2} = n^{1-\varepsilon_n/2}=o(n)$. Thus, the quantity in \eqref{eq:insideDelta} is bounded  uniformly for $|x|\leq \sqrt \ell n^{\varepsilon/4}$ by
\[ \DE_0 \left[e^{\b_N^2  \sum_{i=1}^{\ell} \mathbf 1_{S_{2i} = 0}}  \right] \times o(n^{-1})=o(n^{-1}).\]
This completes the proof of \eqref{eq:2ndStep}.

\textbf{Third step:} As $n\to\infty$ with $n\leq N$,
\begin{equation} \label{eq:3rdstep}
\DE_{0}\left[ e^{ \b_N^2\sum_{i=n-\ell}^{n} \mathbf 1_{S_{2i} = 0}}\mathbf 1_{S_{2n} = 0} \right] \sim \DE_{0}\left[ e^{ \b_N^2\sum_{i=1}^{\ell} \mathbf 1_{S_{2i} = 0}} \right] p_{2n}(0).
\end{equation}
Equivalence \eqref{eq:3rdstep} can be proven by following the same line of arguments as used to prove \eqref{eq:2ndStep}, hence we omit its proof.

Now, combining the three steps leads to the equivalence
\[
\DE_{0}\left[ e^{ \b_N^2\sum_{i=1}^{n} \mathbf 1_{S_{2i} = 0}} \mathbf{1}_{S_{2n}= 0} \right]\sim \DE_{0}\left[ e^{ \b_N^2\sum_{i=1}^{\ell} \mathbf 1_{S_{2i} = 0}} \right]^2 p_{2n}(0).
\]
By \eqref{eq:asympt2ndMoment}, as $\log \ell \sim \log n$, we have
\[
 \DE_{0}\left[ e^{ \b_N^2\sum_{i=1}^{\ell} \mathbf 1_{S_{2i} = 0}} \right] \sim \frac{1}{1-\hat \beta^2 \frac{\log n}{\log N}},
\]
and so \eqref{eq:boundP2P2ndMoment} follows from \eqref{eq:UNinSRW} and the last two displays.
\end{proof}

\iffalse
\begin{remark}
Another approach to prove \eqref{eq:boundP2P2ndMoment} could be to show that for $n\to \infty$ with $n\leq N$ and $k\leq f(n)$ for some explicit $f(n)$,
\begin{equation} \label{eq:quentin}
\DP\left(\tau_k^{(N)} = n\right) = (1+o_{n}(1)) k \DP\left(T_1^{(N)} = n\right) \DP\left(T_1^{(N)} \leq n\right)^{k-1},
\end{equation}
and then follow the same line of proof as used to show \eqref{eq:boundP2P2ndMomentC}.
An estimate similar to \eqref{eq:boundP2P2ndMomentC} has been shown in \cite[Theorem 1.1]{AlexanderKennethS2016Llta} for a renewal process $(\tau_k)$ (with no dependence on $N$) such that $\DP(\tau_1=n)=\varphi(n)n^{-1}$ with $\varphi$ a slowly varying function. It may be possible that their argument could adapt to our case (where there is a dependence on $N$).
\end{remark}
\fi
\section{Khas'minskii's lemma for discrete Markov chains}
The following theorem is another discrete analogue of Khas'minskii's lemma, compare with Lemma \ref{lem:modKhas}.
\begin{theorem} \label{th:discrete_Khas}
Let $(Y_n)_n$ be any markov chain on a discrete state-space $E$ and let $f:E \rightarrow \mathbb R_+$. Then for all $k\in\mathbb N$,
if
\begin{equation} \label{eq:def_eta_appendix}
\eta_0:=\sup_{x\in E} \DE_{x} \left[\sum_{n=1}^k (e^{f(Y_n)}-1)\right] < 1,
\end{equation}
one has
\begin{equation}
\sup_{x\in E} \DE_{x} \left[e^{\sum_{n=1}^k f(Y_n)}\right] \leq \frac{1}{1-\eta_0}.
\end{equation}
\end{theorem}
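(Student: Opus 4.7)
The plan is to mimic directly the proof of Lemma \ref{lem:modKhas}; indeed, inspecting that proof shows that only the Markov property of $(S_n)$ and the non-negativity of the integrand are used — the state space $\mathbb Z^2$, the nearest-neighbor structure, and the deterministic path $Z$ play no essential role. The role of $(e^{\kappa^2}-1)\mathbf 1_{S_n=Z_n}$ in \eqref{eq:condition_modKhas} is played here by $e^{f(Y_n)}-1$, which is still non-negative.

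Concretely, setting $g(Y_n):=e^{f(Y_n)}-1\geq 0$, I would first write
\[
e^{\sum_{n=1}^k f(Y_n)}=\prod_{n=1}^k\bigl(1+g(Y_n)\bigr)=\sum_{p=0}^k\ \sum_{1\leq n_1<\dots<n_p\leq k}\prod_{i=1}^p g(Y_{n_i}),
\]
take $\DE_x$, and condition at the last Markov time $n_{p-1}$. Using the Markov property,
\[
\DE_x\!\left[\prod_{i=1}^p g(Y_{n_i})\right]=\DE_x\!\left[\prod_{i=1}^{p-1}g(Y_{n_i})\,\DE_{Y_{n_{p-1}}}\!\bigl[g(Y_{n_p-n_{p-1}})\bigr]\right],
\]
and then summing over $n_p\in\{n_{p-1}+1,\dots,k\}$ pulls the sum inside the expectation and produces the factor $\DE_{Y_{n_{p-1}}}\bigl[\sum_{m=1}^{k-n_{p-1}}g(Y_m)\bigr]$, which by \eqref{eq:def_eta_appendix} is at most $\eta_0$.

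Iterating this bound yields
\[
\sum_{1\leq n_1<\dots<n_p\leq k}\DE_x\!\left[\prod_{i=1}^p g(Y_{n_i})\right]\leq\eta_0^p,
\]
uniformly in $x\in E$, and summing the geometric series $\sum_{p\geq 0}\eta_0^p=(1-\eta_0)^{-1}$ gives the stated bound. The only steps that require a bit of care are (i) verifying that the inner conditional expectation is indeed bounded by $\eta_0$ uniformly in the (random) starting point $Y_{n_{p-1}}$, which follows because the supremum in \eqref{eq:def_eta_appendix} is over \emph{all} $x\in E$ and over any $k'\leq k$ (by monotonicity in $k$ of $\DE_x[\sum_{n=1}^{k'}g(Y_n)]$), and (ii) justifying the exchange of sum and expectation, which is automatic by Tonelli since $g\geq 0$. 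There is no substantive obstacle beyond careful bookkeeping of the Markov conditioning.
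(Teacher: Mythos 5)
Your proposal is correct and reproduces the paper's proof essentially verbatim: the same identification $D_n = e^{f(Y_n)}-1$, the same expansion of $\prod(1+D_n)$, the same Markov conditioning at $n_{p-1}$, the same bound by $\eta_0$, the same iteration and geometric series. The two bookkeeping points you flag (uniformity of the inner bound over the random starting point, and Tonelli for the interchange) are exactly the silent steps in the paper's version, so there is nothing to add.
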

\begin{proof} Denote by $D_n = e^{f(Y_n)}-1$. We have,
\begin{align*}
& \DE_{x} \left[e^{\sum_{n=1}^N f(Y_n)}\right] = \DE_{x} \left[\prod_{n=1}^N (1+D_n)\right]
 = \sum_{p=0}^\infty  \sum_{1\leq n_1 < \dots<  n_p \leq k}\DE_{x} \left[ \prod_{i=1}^p D_{n_i}\right]\\
& = \sum_{p=0}^\infty  \sum_{1\leq n_1 < \dots < n_{p-1} \leq k} \DE_{x}
\left[
\prod_{i=1}^{p-1} D_{n_i} \DE_{Y_{n_{p-1}}}
\left[\sum_{n=1}^{k-n_{p-1}} D_n \right]
\right]\\
& \stackrel{\eqref{eq:def_eta_appendix}}{\leq}\sum_{p=0}^\infty  \eta_0 \sum_{1\leq n_1 < \dots < n_{p-1} \leq k} \DE_{x}
\left[
\prod_{i=1}^{p-1} D_{n_i}
\right]
 \leq \dots \leq
\sum_{p=0}^\infty  \eta_0^p = \frac{1}{1-\eta_0}.
\end{align*}
\end{proof}
\iffalse
\begin{remark}
Compared to the continuous time version of Khas'Minskii's lemma (cf [REF]), here we cannot simply ask that $\sup_{x\in E}\DE_x[\sum_{n=1}^k f(Y_n)] < 1$. The technical issue is that if one simply expands the exponential in its Taylor series, then in the discrete setting diagonal terms will appear. Nevertheless, one can recover this condition when $f$ is bounded, as shown by the following corollary.
\end{remark}
\fi
\begin{corollary} \label{cor:discrete_Khas}
Let $(Y_n)_n$ be any markov chain on a discrete state-space $E$ and let $f:E \rightarrow [0,1]$. Then for all $k\in\mathbb N$,
if
\begin{equation} \label{eq:def_eta_appendixcor}
\eta_1:=\sup_{x\in E} \DE_{x} \left[\sum_{n=1}^k f(Y_n)\right] < 1,
\end{equation}
one has
\begin{equation}
\sup_{x\in E} \DE_{x} \left[e^{\sum_{n=1}^k f(Y_n)}\right] \leq \frac{1}{1-\eta_1}.
\end{equation}
\end{corollary}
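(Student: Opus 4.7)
My plan is to derive Corollary \ref{cor:discrete_Khas} as a direct consequence of Theorem \ref{th:discrete_Khas}. The only step is to compare the parameter $\eta_0$ of \eqref{eq:def_eta_appendix} with the parameter $\eta_1$ of \eqref{eq:def_eta_appendixcor}.

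Since $f(y) \in [0,1]$, we have $f(y)^k \leq f(y)$ for every $k \geq 1$, so the Taylor expansion of the exponential yields the pointwise inequality
\[
e^{f(y)} - 1 \;=\; \sum_{k \geq 1} \frac{f(y)^k}{k!} \;\leq\; f(y)\sum_{k \geq 1}\frac{1}{k!} \;=\; (e-1)\, f(y).
\]
Summing over $n \in \llbracket 1,k\rrbracket$, taking $\DE_x$, and passing to the supremum over $x \in E$ therefore gives $\eta_0 \leq (e-1)\eta_1$. Under the hypothesis $(e-1)\eta_1 < 1$, Theorem \ref{th:discrete_Khas} applies to $f$ and produces the bound
\[
\sup_{x\in E}\DE_x\!\left[e^{\sum_{n=1}^k f(Y_n)}\right] \;\leq\; \frac{1}{1-\eta_0} \;\leq\; \frac{1}{1-(e-1)\eta_1}.
\]

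The main obstacle is that this argument introduces a spurious factor $(e-1)$ where the corollary claims $1$. In all uses of the corollary in the paper (see the derivation of \eqref{eq:Holder} via \eqref{eq:khasCondition}), one has $\eta_1 \to 0$ as $N \to \infty$, and the weaker bound $1/(1-(e-1)\eta_1) \to 1$ suffices to obtain the desired limit $\DE_X^{\otimes q}[e^{v_N\beta_N^2 B_N}]^{1/v_N} \to 1$; so for the present purposes the crude reduction above is enough. To actually obtain the sharper constant stated, one would need a more delicate expansion: write $\bigl(\sum_n f(Y_n)\bigr)^p$ via the multinomial theorem, group tuples $(n_1,\dots,n_p) \in \llbracket 1,k\rrbracket^p$ according to their set of distinct values, use $f(y)^k \leq f(y)$ on repeated indices, and then iterate the Markov property exactly as in the proof of Theorem \ref{th:discrete_Khas} to reduce each ordered sum to $\eta_1^q$. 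I would only go through this refinement if the exact constant mattered downstream.
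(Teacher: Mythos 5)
Your reduction to Theorem \ref{th:discrete_Khas} via $e^{f(y)}-1\leq(e-1)f(y)$ is exactly the route the paper takes (its proof reads ``observe that $e^{f(x)}-1\leq e^c f(x)$ and apply Theorem \ref{th:discrete_Khas}'', where $e^c$ is evidently a typo for the constant $(e-1)$ or $e$ appropriate to $f$ bounded by $1$). You are right to flag the discrepancy: the paper's proof, just like yours, delivers only $\dfrac{1}{1-(e-1)\eta_1}$, not the stated $\dfrac{1}{1-\eta_1}$. In fact the corollary as printed is \emph{false} with the constant $1$. Take $E=\{0,1\}$, $f=\mathbf 1_{\{1\}}$, the i.i.d.\ chain with $\DP(Y_1=1)=p$, and $k=1$. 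Then $\eta_1=p$ and $\DE_x\!\left[e^{f(Y_1)}\right]=1+p(e-1)$, which for small $p$ (precisely $p<1-\tfrac1{e-1}\approx 0.418$) strictly exceeds $\tfrac1{1-p}$. So no proof can restore the constant $1$; the factor you introduce is not ``spurious'' but genuinely necessary.

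One small correction to the outlook in your last paragraph: the refined multinomial expansion you sketch would not recover the constant $1$ either. Grouping tuples by their multiset of distinct indices, using $f^r\leq f$, and telescoping via the Markov property to bound the sum over $q$ distinct times by $\eta_1^q$, one ends up resumming $\sum_q \eta_1^q(e^x-1)^q$ and evaluating at $x=1$, which again gives $\tfrac1{1-(e-1)\eta_1}$ — the identical bound. That is consistent with the counterexample above: no argument can do better. Your conclusion that the cruder reduction suffices is exactly right, and for the reason you give: in the sole use of the corollary, in the derivation of \eqref{eq:khasCondition}–\eqref{eq:Holder}, one arranges $\eta_1\to 0$, and $\tfrac1{1-(e-1)\eta_1}\to 1$ is all that is needed. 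The corollary statement in the paper should be read with $(e-1)\eta_1$ (or some harmless constant multiple of $\eta_1$) in the denominator.
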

\begin{proof}
Simply observe that $e^{f(x)}-1\leq e^c f(x)$ and apply Theorem \ref{th:discrete_Khas}.
\end{proof}

\end{document}